\documentclass[11pt,reqno,tbtags]{amsart}

\usepackage{amsthm,amssymb,amsfonts,amsmath}
\usepackage{amscd} %Commutative diagrams. 
\usepackage[numbers, sort&compress]{natbib} 
\usepackage{subfigure, graphicx}
\usepackage{vmargin}
\usepackage{setspace}
\usepackage{paralist}
\usepackage[pagebackref=true]{hyperref}
\usepackage{xcolor}
\usepackage[normalem]{ulem}

\usepackage{amsaddr}

\usepackage{stmaryrd} %for \bbr
\usepackage[refpage,noprefix,intoc]{nomencl} %for list of notation.
\newcommand{\E}[1]{{\mathbf E}\left[#1\right]}

\newcommand{\p}[1]{{\mathbf P}\left\{#1\right\}}

\newcommand{\I}[1]{{\mathbf 1}_{[#1]}}
\newcommand{\set}[1]{\left\{ #1 \right\}}
\newcommand{\brackets}[1]{\left[ #1 \right]}
\newcommand{\Cprob}[2]{\mathbf{P}\set{\left. #1 \; \right| \; #2}} 

\newcommand{\Cexp}[2]{\mathbf{E}\brackets{\left. #1 \; \right| \; #2}}

\newcommand\cA{\mathcal A}
\newcommand\cB{\mathcal B}

\newcommand\cD{\mathcal D}

\newcommand\cF{\mathcal F}

\newcommand\cR{{\mathcal R}}

\providecommand{\eps}{}
\renewcommand{\eps}{\epsilon}
\providecommand{\ora}[1]{}

\newcommand{\Bad}{\text{Bad}}

\renewcommand{\ora}[1]{\overrightarrow{#1}}
\DeclareRobustCommand{\SkipTocEntry}[5]{} %For table of contents when using AMS styles. Change 5 to 4 if not using hyperref. 
\newtheorem{thm}{Theorem}
\newtheorem{lem}[thm]{Lemma}

\newtheorem{cor}[thm]{Corollary}

\newtheorem{claim}[thm]{Claim}

\numberwithin{equation}{section}
\numberwithin{thm}{section}

\makenomenclature										
\graphicspath{ {./images/} }
\usepackage{wrapfig}
\usepackage{tikz}
\usepackage{standalone}
\newcommand{\cal}{\mathcal}

\begin{document}

\title[The probability that graphs with given degrees are connected]{What is The Probability That A Random Graph With A Given Degree Sequence is  Connected?} 
\author{Louigi Addario-Berry}
\address{Department of Mathematics and Statistics, McGill University, Montr\'eal, Canada}
\email{louigi.addario@mcgill.ca}
\author{Bruce Reed}
\address{Institute of Mathematics, Academia Sinica, Taiwan. Supported by  NSTC Grant 112-2115-M-001 -013 -MY3}
\email{bruce.al.reed@gmail.com}
\author{Dao Chen Yuan}
\address{Department of Mathematics and Statistics, McGill University, Montr\'eal, Canada}
\email{dao.yuan@mcgill.ca}
\date{April 28, 2026} %; revised ...
%\urladdrx{http://problab.ca/louigi/}

%\keywords{<keywords>}
\subjclass{05C80,60C05} 

%{60C05 (68P10,68W40)} %%{Primary: <subject>; Secondary: <subject>}
\begin{abstract} 
An $n$-tuple  ${\mathcal D}=(d(1),\dots,d(n))$ is a {\it feasible degree sequence} 
if there is a  graph on $\{1,\dots,n\}$ such that $i$ has degree $d(i)$. Any such graph will have $m=\sum_{i=1}^n d(i)/2$
edges. Letting $G({\mathcal D})$ be a graph chosen uniformly from those with the 
given degree sequence, we upper-bound the probability that $G({\cal D})$ is  disconnected based on the number  of vertices of degree 
$d$ for small $d$, and develop a powerful tool for proving such bounds.  
If there are any vertices of degree zero the  probability $G$ is disconnected is  $1$, so we assume there are no such vertices.   Our results then imply that if there are $o(\sqrt{m})$ vertices of degree $1$ and $o(m)$ vertices of degree 2 then with high probability $G$ is connected, while if there are no vertices of degree 1 or 2 then the probability $G$ is disconnected is 
$O(\frac{n^4}{m^6})$. 
\end{abstract}

\maketitle

%\tableofcontents

%%%%%%%%%%%%%%%

\section{\bf Introduction}\label{sec:intro} 

An $n$-tuple  ${\mathcal D}=(d(1),\dots,d(n))$ is a {\it feasible degree sequence} 
if there is a  graph on $\{1,\dots,n\}$ such that $i$ has degree $d(i)$. We are interested 
in the probability that the random  graph $G=G({\mathcal D})$ on  $[n]=\{1,\dots,n\}$ chosen uniformly from those with the given degree 
sequence is connected. 

The answer to this question does not depend on the order of the degrees in the sequence as 
any  ordering gives the  same probability of connectivity. Our analysis 
and discussion will assume that the degrees appear in non-decreasing order,  although our theorems are stated without this condition.

For $G$ to be connected there can be no vertex of degree $0$, and we restrict our attention to such 
degree sequences from now on. 
If every vertex has degree 1 then $G$ is not connected; indeed, in this case every component of $G$ has 2 vertices. 
If every vertex of $G$ has degree 2, then $G$ is the disjoint union of cycles. It is well known since at least 1981 \cite{WORMALD1981168} that for such degree sequences,
the probability that $G$ is a single cycle is $o(1)$. On the other hand, as shown by Wormald in the same year \cite{wormald1981asymptotic},
for every fixed $r \ge 3$, if every vertex has degree $r$ then the probability that $G({\mathcal D}_n)$ is 
connected goes to 1 as $n$ goes to infinity. 

The answer to the question that we tackle in this paper \emph{does} depend on the number of vertices of low degree. For every $1\leq i\leq n-1$, we use $n_i=n_i(\cD)$ to denote the number of vertices of degree $i$ in $\cD$. 
We note that every graph with degree sequence ${\mathcal D}$  has $m=m_{\mathcal D}= \frac{1}{2}\sum_{i=1}^n d(i)$ edges. 
If $n_1>m$ then there must be a component which is an edge, so for such degree sequences the probability $G$ is connected is $0$. On the other hand, we prove that if $n_1=o(\sqrt{m})$ and $n_2=o(m)$ then with high probability $G({\mathcal  D})$ is connected.\footnote{We use \emph{with high probability}, or whp, to mean with probability tending to one as $m$ tends to infinity (or equivalently as $n \to \infty$, since we considering simple graphs).} More strongly, we have the following theorem, which is stated in terms of the invariants
\begin{gather*}
    u_{edge}=\frac{\max(n_1-1,0)^2}{m},\,u_{\Delta}=\frac{\max(n_2-2,0)^3}{m^3},\,u_{\Delta+1}=\frac{n_1\max(n_2-1,0)^2n_3}{m^4},\\
    u_{K_4-e}=\frac{\max(n_2-1,0)^2\max(n_3-1,0)^2}{m^5},\,u_{K_4}= \frac{\max(n_3-3,0)^4}{m^6},\,u_{K_5^+}=\frac{n}{m^6}.
\end{gather*}
These invariants bound, in order, the probability of  components which are: an edge, a triangle, a triangle attached to a degree-1 vertex, a clique of size 4 with an edge removed,  a clique of size 4, and a clique of size 5 and other small graphs. 
\begin{thm}
\label{main}
For any feasible degree sequence $\cD=(d(1),\ldots,d(n))$, the probability that $G(\cD)$ is disconnected is 
\[
 O\left(u_{edge}+u_{\Delta}+u_{\Delta+1}+u_{K_4-e}+u_{K_4}+u_{K_5^+}
\right).
\]
\end{thm}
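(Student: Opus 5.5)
If $G(\cD)$ is disconnected, then it has a component whose vertex set $S$ satisfies $|S|\le n/2$. The plan is to union-bound, over all candidate vertex sets $S$ with $|S|\le n/2$, the probability that $S$ induces a connected component, i.e.\ that no edge of $G$ joins $S$ to its complement. To estimate these probabilities in the uniform model (rather than the configuration model, which is poor when degrees are large), I will use a switching argument.

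\textbf{Switching step.} Suppose $S$ spans a component with $e(S)$ edges, and let $u$ and $v$ be the sums of degrees inside $S$ and outside $S$ respectively. Pick an edge $xy$ inside $S$ and an edge $wz$ outside $S$, and replace them by $xw$ and $yz$. Each resulting graph has exactly two edges crossing between $S$ and $\bar S$. A careful count of forward and backward switchings, excluding configurations that would create multi-edges, should give
\[
\mathbf{P}(S \text{ is a component}) \lesssim \frac{\mathbf{P}(\text{exactly two crossing edges})}{\Theta\bigl(e(S)\,m / (\text{crossing count})\bigr)}.
\]
Iterating this gives a bound of roughly $\prod (d_i/m)$-type factors. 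For small $S$ this is essentially $\prod_{i\in S} d(i)$ divided by $m^{e(S)}$, which is the configuration-model heuristic, now made rigorous.

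\textbf{Small sets.} A component on $k$ vertices with degrees $d_1,\dots,d_k$ needs $e(S)=\tfrac12\sum d_i$ and each $d_i\le k-1$. The contribution of the cheapest structure should be:
\begin{itemize}
\item $k=2$: $n_1^2/m$, giving $u_{edge}$.
\item $k=3$: a triangle, $n_2^3/m^3$, giving $u_\Delta$; a path on three vertices is dominated by this.
\item $k=4$: either $K_4-e$ or the triangle-plus-pendant, giving $u_{K_4-e}$ and $u_{\Delta+1}$; a $4$-cycle or star is dominated by $u_{edge}$ and $u_\Delta$ terms.
\item $k=4$, $K_4$: $n_3^4/m^6$, giving $u_{K_4}$.
\end{itemize}
The ``$-1$'', ``$-2$'' and ``$-3$'' shifts come from counting ordered distinct choices, together with the fact that if, say, $n_1=1$, then no edge component exists. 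Components with $k\ge5$ and with all degrees at least $\min$ have $e(S)\ge 6$ once one accounts for components containing low-degree vertices. The number of choices of such an $S$ should be controlled by $n\cdot(\text{stuff})/m^6$, giving $u_{K_5^+}$. More precisely, I will charge sets of $k\ge5$ vertices by choosing a spanning tree and summing over degrees, so that each extra vertex costs a factor like $d/m\cdot(\text{sum of degrees})$. I then need this series to be geometric.

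\textbf{Main obstacle.} The real difficulty is medium and large $S$, where $|S|$ is comparable to $n$ and $e(S)$ can be small relative to $m$ while the number of sets is exponential. Here I would use the switching bound in its strong form: the probability that there are zero crossing edges is at most
\[
\prod_{j=0}^{t}\frac{O(j)}{e(S)\,e(\bar S)/m}
\]
relative to the probability of having $t$ crossing edges. Taking $t$ of order the typical crossing number $\approx u v/(2m)$ then yields a bound like $\exp(-\Omega(uv/m))$. This must beat the number of sets with given degree sums, which is at most $\binom{n}{|S|}$. Since every vertex has degree at least $1$ and vertices of degree $1$ or $2$ are few in the relevant regime (otherwise the $u$-terms are already $\Omega(1)$), one has $u\ge c|S|$ with $c>1$ effectively. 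This should make $uv/m\gg |S|\log(en/|S|)$ fail only for small $|S|$, which the direct computations above handle. Making the entropy-versus-switching tradeoff work uniformly, especially with heterogeneous huge degrees, is the step I expect to require the most care. If needed, I would handle it by splitting $S$ according to whether it contains a vertex of degree at least $\sqrt m$.
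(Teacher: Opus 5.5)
There is a genuine gap, and it lies where you locate the main obstacle: the medium and large sets. The per-set estimate you propose there, $\exp(-\Omega(uv/m))$, is a Poisson-type tail for the number of crossing edges, and it cannot pay for the entropy. If $S$ consists of vertices of bounded degree, then $uv/m\le 2u=O(|S|)$. But there can be $\binom{n}{|S|}\ge (n/|S|)^{|S|}$ such sets, so the union bound gives nothing for any $|S|$ tending to infinity with $|S|=o(n)$; for $|S|=\Theta(n)$ it hinges on unspecified constants.

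Your direct computations only treat bounded $|S|$. The spanning-tree charging for $k\ge 5$ is only sketched, and its convergence is exactly what is at stake. Yet when $n_1\le 1$, $n_2\le 2$, $n_3\le 3$, the theorem requires all these sets together to contribute $O(n/m^6)$.

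A union bound would need a configuration-model-strength estimate, roughly $\mathbf{P}(S\text{ closed})\le (O(u/m))^{u/2}$ for $u\ll m$. This comes from switching out all $e(S)$ internal edges, with ratio of order $(t+2)^2/\bigl((u-t)(v-t)\bigr)$ between $t$ and $t+2$ crossing edges; your step ratio $O(j)m/(e(S)e(\bar S))$ is weaker by a factor of order $m/j$. It also needs entropy counted by degree classes. Even granting this, two obstacles remain unaddressed.
\begin{enumerate}
\item Each step needs $\Omega(e(S)e(\bar S))$ valid forward switchings for every graph in the class. But $\{xy,wz\}\mapsto\{xw,yz\}$ is blocked whenever $w\in N(x)$ or $z\in N(y)$, and a few vertices of huge degree adjacent to $S$ can block most of $E(\bar S)$. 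This is the $D^*$ phenomenon from the introduction, and the theorem allows arbitrary degree sequences.
\item Once $u$ is comparable to $m$, the bound $(Cu/m)^{u/2}$ is not small at all.
\end{enumerate}
Splitting according to whether $S$ contains a vertex of degree at least $\sqrt m$ does not by itself resolve either point.

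The paper avoids any union bound over vertex sets. Its key input is Theorem \ref{thekeyresult}: with probability $1-o(m^{-6\log\log m})$ there is at most one component with more than $4(\log m)^4$ edges. The proof explores from each vertex.
\begin{enumerate}
\item A martingale argument combined with switching estimates shows that an exploration whose explored degree reaches $4(\log m)^4$ lies in a component with $\Omega(m)$ edges (Corollary \ref{thirdgoodcor}).
\item In such a component, the exploration from any vertex either reaches boundary size $\epsilon m/8$ or finds a vertex of degree at least $\sqrt m(\log m)^{-2}$ (Corollary \ref{goodtogo}).
\item Separate switching arguments put all such explorations and all such high-degree vertices into a single component (Lemmas \ref{lem:mlogm-same-comp}, \ref{bigguyssticktogether}, \ref{theyjoin}).
\end{enumerate}
After this reduction only components with polylogarithmically many edges remain. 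These are counted locally along the exploration via Lemmas \ref{lem:boundary-grows2} and \ref{lem:boundary-grows3}. There, conditioning on the boundary not growing forces the newly found neighbours to have small total degree, which neutralises the high-degree obstruction. Your small-component bookkeeping is broadly in the spirit of Section \ref{secearly}, but without a substitute for this ``large components merge'' step the argument does not close.
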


\begin{cor}\label{main-cor}
    If $n_1=o(\sqrt{m})$ and $n_2=o(m)$ then whp $G$ is connected. If $n_1<2$ and $n_2<2$ then the probability $G$ is 
    disconnected is $O(\frac{n_3^4+n}{m^{6}})$. 
\end{cor}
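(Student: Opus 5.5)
The corollary follows from Theorem~\ref{main} by bounding each of the six invariants under the stated hypotheses. Throughout I use the trivial facts $n_1,n_2,n_3\le n\le 2m$, which hold because there are no vertices of degree zero, so $2m=\sum_i d(i)\ge n$. I also use that $m\to\infty$ along any sequence of degree sequences with $n\to\infty$.

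For the first statement, assume $n_1=o(\sqrt m)$ and $n_2=o(m)$. The six terms are handled as follows.
\begin{itemize}
\item $u_{edge}\le n_1^2/m=o(1)$.
\item $u_{\Delta}\le n_2^3/m^3=o(1)$.
\item $u_{\Delta+1}\le n_1n_2^2n_3/m^4$. Here $n_1=o(\sqrt m)$, $n_2^2=o(m^2)$ and $n_3\le 2m$, so $u_{\Delta+1}=o(m^{3.5}/m^4)=o(1)$.
\item $u_{K_4-e}\le n_2^2n_3^2/m^5\le o(m^2)\cdot 4m^2/m^5=o(1)$.
\item $u_{K_4}\le 16m^4/m^6=O(m^{-2})$.
\item $u_{K_5^+}\le 2m/m^6=O(m^{-5})$.
\end{itemize}
Every term is therefore $o(1)$, so Theorem~\ref{main} shows that the probability of disconnection tends to zero, i.e.\ $G$ is connected whp.

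For the second statement, assume $n_1<2$ and $n_2<2$, so $n_1\le 1$ and $n_2\le 1$. Then $\max(n_1-1,0)=0$, which gives $u_{edge}=0$. Also $\max(n_2-2,0)=0$ and $\max(n_2-1,0)=0$, which give $u_\Delta=u_{\Delta+1}=u_{K_4-e}=0$. The remaining terms satisfy $u_{K_4}\le n_3^4/m^6$ and $u_{K_5^+}=n/m^6$. Theorem~\ref{main} then yields the bound $O\bigl((n_3^4+n)/m^6\bigr)$.

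There is no real obstacle here. The only point needing care is the term $u_{\Delta+1}$ (and similarly $u_{K_4-e}$), where $n_3$ is not controlled by the hypotheses and must be bounded crudely by $2m$. The exponents still work out, since $1/2+2+1=3.5<4$ and $2+2=4<5$.
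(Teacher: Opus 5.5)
Your proposal is correct and is the intended derivation: the paper gives no separate proof and treats this corollary as an immediate consequence of Theorem~\ref{main}, obtained by bounding the six invariants under the hypotheses, using $n_3\le n\le 2m$ for the uncontrolled factors. Your second part, where the $\max(\cdot,0)$ truncations make $u_{edge},u_\Delta,u_{\Delta+1},u_{K_4-e}$ vanish once $n_1,n_2\le 1$, is likewise exactly what those truncations in the definitions are designed for.
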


We  note that the upper bounds on $n_1$ and $n_2$ ensuring $G$ is connected whp, are not tight. Indeed $m$ is the required lower bound on the number of vertices of degree $1$ to ensure a random graph with a given degree sequence is disconnected with nonzero probability. 
For example, if ${\cal D}=(d(1),\dots,d(n))$ has $d(n)=n-1$ and $d(i)=1$ for $i<n$, then $G(\cD)$
is always a star and connected. In the same vein, if  $\cD$ has $d(n)=d(n-1)=n-1$ and $d(i)=2$ for $i<n-1$, then $G(\cD)$ is also always connected. However, as we discuss more fully in Section \ref{sub:prev}, for a large class of degree sequences it can be shown that
if $n_1=\Omega(\sqrt{m})$ or $n_2=\Omega(m)$  then whp $G(\cD)$ is disconnected. For example,  this is true if 
$d(n)=o(\sqrt{m})$.

A key tool in proving Theorem \ref{main}, is the following result which is of independent interest.

\begin{thm} 
\label{thekeyresult}
     For any $\gamma>0$,   for any degree sequence 
     satisfying $n_1 \le m^{1-\gamma}, n_2 \le 10^{-6}m$,  the probability that there is more than one component with more than   $4(\log m)^4$ edges  
    is $o(m^{-6 \log \log m})$.
\end{thm}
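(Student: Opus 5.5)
We will prove Theorem \ref{thekeyresult} with a switching argument on the uniform graph $G(\cD)$, rather than the configuration model, since $\cD$ may have very large maximum degree and the configuration model need not be simple with non-negligible probability. Let $\Bad$ be the event that $G$ has two components $C_1,C_2$, each with more than $L=4(\log m)^4$ edges. We partition $\Bad$ according to the edge set $F$ of the smaller of the two components, so $e(F)=k$ with $L<k\le m/2$. The probability that a given set $S$ of vertices spans a component $H$ with $k$ edges is then bounded by comparing graphs in which $S$ is a component with graphs in which $S$ is joined to the rest by several edges.

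\textbf{The switching.} Take a graph $G$ in which $S$ spans a component with $k$ edges. Choose $t$ edges $a_ib_i$ inside $S$ and $t$ edges $c_id_i$ outside $S$, and replace them by $a_ic_i$ and $b_id_i$. Degrees are preserved. We must choose edges whose removal keeps things under control and whose new pairs are non-edges; this is easy because all the new edges cross the cut. To count forward choices from below, we need many edges in $S$ and many outside. Since $S$ spans a component with at least $L$ edges and $n_1\le m^{1-\gamma}$, $n_2\le 10^{-6}m$, a positive fraction of the edges in $S$, and of the edges in the complement, are \emph{good}: their endpoints have degree at least $3$ or avoid the low-degree vertices.

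In the reverse direction, from a graph with exactly $2t$ crossing edges one recovers the switching by pairing up the crossing edges, which gives at most $(2t)!$ choices. Taking $t\approx \log m\cdot\log\log m$, each forward count is about $(k\cdot m)^t/t!$ against a reverse count of about $\binom{e(S,\bar S)}{2t}$. Multiplying these ratios, and using a union over the number of crossing edges, gives
\[
\Pr(S\text{ is a component})\le \left(\frac{C t^2}{k}\right)^{t}
\]
up to factors accounting for how many such $S$ we sum over.

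\textbf{Union over $S$.} This is the main obstacle. The number of candidate sets $S$ carrying $k$ edges is exponential in $k$, so per-set bounds alone fail. We instead follow an exploration: we reveal $C_1$ by a BFS and count components through an encoding of their spanning tree. This costs at most $e^{O(k)}$ choices per \emph{root edge}, and the bound must be $e^{-\omega(k)}$ to absorb it. We will therefore use $t$ proportional to $k$ when $k$ is large: the $k$ edges of $S$ and the $m-k\ge m/2$ edges outside allow $t=ck$ switches with gain factor $(c'k/m)\cdot$... which is not small. So the approach must instead be a \emph{cut} switching. We would show that whp every set $S$ with $L\le e(S)\le m/2$ has $e(S,\bar S)\ge \Omega(e(S)/\log^{3}m)$; that is, $G$ has edge expansion. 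We would prove this via the switching above applied to cuts, where the entropy of $S$ is controlled because low expansion forces $S$ to be nearly a union of components. The $(\log m)^4$ threshold and the $m^{-6\log\log m}$ target come from balancing the entropy $e^{O(k/\log^{3}m)}$ of sparse cuts against the gain $(k/\text{polylog})^{-t}$ with $t=\Theta(k/\log^3 m)\ge \log m\log\log m$. The hypotheses on $n_1,n_2$ ensure that most vertices have degree at least $3$, which is needed for expansion.
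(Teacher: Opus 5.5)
\textbf{The key step is missing, and the proposed replacement is false as stated.} Your switching creates $t$ crossing pairs and compares with graphs having exactly $2t$ crossing edges. This gives a bound for one \emph{fixed} vertex set $S$. As you observe yourself, such a bound cannot be summed over the $e^{\Omega(k)}$ (indeed up to $\binom{n}{|S|}$) candidate sets.

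Everything then rests on the claim that whp every $S$ with $(\log m)^4\le e(S)\le m/2$ satisfies $e(S,\bar S)=\Omega(e(S)/\log^3 m)$, with ``entropy $e^{O(k/\log^3 m)}$ of sparse cuts''. You give no counting argument for this, and it fails under the theorem's hypotheses. Take all degrees in $\{1,3\}$ with $n_1=m^{0.6}$, which is allowed with $\gamma=0.4$. The expected number of isolated edges is of order $n_1^2/m=m^{0.2}$. Passing through the configuration model (simple with probability bounded below for bounded degrees), whp $G(\cD)$ has $\Theta(m^{0.2})$ isolated edges. Their union is a set $S$ with $e(S)\gg(\log m)^4$ and $e(S,\bar S)=0$.

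So an expansion statement can only hold for connected $S$, and then ``low expansion forces $S$ to be nearly a union of components'' buys no entropy. The number of connected $k$-edge sets through a root is in general exponential in $k$. Moreover, connectivity is a property of $G$, not of $S$. Summing over connected sets therefore means summing over trees $T$ and bounding $\p{T\subseteq G \text{ and } V(T) \text{ spans a component}}$ in the uniform model. That is precisely the hard part, and the proposal does not touch it.

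The sketch also says nothing about smaller components with $\Theta(m)$ edges, or about degree sequences with $D^*=\Omega(m)$ or $d(n)\gg\sqrt m$; this is where most of Section~\ref{sectheyjoin} goes. Finally, the hypothesis $n_1\le m^{1-\gamma}$ is never used quantitatively.

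\textbf{How the paper gets around this.} It never sums over sets. It explores from each of the $n$ vertices and bounds the increments of the boundary walk $X_i$ \emph{conditionally on the explored subgraph} $H_{i-1}$ (Lemma~\ref{lem:boundary-grows2}). The entropy of component shapes is thus absorbed step by step, and the only union bound is over the root.

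Two ingredients drive the growth phase. First, $n_1\le m^{1-\gamma}$ makes hits on leaves rare (Lemma~\ref{lem:sum-J}). Second, $v_i$ is chosen to have minimum open degree, which forces $L_i\le\sqrt{X_{i-1}}$. Together these show that once $4(\log m)^4$ half-edges have been exposed, $X_i$ keeps doubling until the component has $\Omega(m)$ edges, except with probability $o(m^{-(\log m)^{1/3}})$ (Corollaries~\ref{firstgoodcor}--\ref{thirdgoodcor}).

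Lemma~\ref{theyjoinmain} then merges all linear components. A martingale bound shows each such exploration reaches boundary $\Omega(m)$ or a vertex of degree at least $\sqrt m(\log m)^{-2}$. The switching of Lemma~\ref{theyjoin} then works between two explicitly exposed large boundaries. Lemmas~\ref{lem:mlogm-same-comp} and~\ref{bigguyssticktogether} handle the high-degree vertices.

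Your first instinct, revealing $C_1$ by BFS, was the right one. You dropped it because you treated it as an enumeration of spanning trees costing $e^{O(k)}$. Analysing the exploration with conditional switching estimates requires no such enumeration.
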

This theorem allows us to obtain tight bounds on the probability that $G({\cal D})$
is connected simply by determining   the probability that there is a component of size at most 
$4(\log m)^4$.\par 
Theorem \ref{main} is just the tip of the iceberg with respect to possible applications of 
Theorem \ref{thekeyresult} which will  allow the computation of precise bounds on the probability that $G(\cD)$ has up to any fixed number $c$ of components for, e.g., a degree sequence where every vertex has degree at most $r$.

\subsection{Generating  \texorpdfstring{$G({\mathcal D})$}{G(D)} and the Configuration Model}  

We consider a set consisting of $d(i)$ labelled half-edges corresponding to each vertex $i$.
Any matching on the half-edges corresponds to a unique, half-edge labelled multigraph (possibly with loops 
and parallel edges). Each simple graph with degree sequence ${\mathcal D}$ corresponds 
to $\prod_{i=1}^n d(i)!$ matchings. So, if we generate a random matching  ${\cR}={\cR}_{\cD}$ conditional 
on it yielding a multigraph without loops  and parallel edges, then the corresponding random 
graph is $G({\mathcal D})$. Our analysis involves constructing such a matching and hence the graph $G(\cD)$.\par
In the configuration model on multigraphs, we simply generate an unconditioned random matching and take the corresponding multigraph. If we generate the matching one edge at a time then at each step, given the current partial matching, every unmatched half-edge is equally likely to be matched to every other unmatched half-edge, which makes the analysis of this model relatively easy. For example, the expected number of edges components in the configuration model is  exactly $\frac{{\binom{n_1}{2}}}{m-1}$ and a straightforward analysis shows that if $n_1=\omega(\sqrt{m})$ then with high probability there is an edge component
and hence $G(\cD)$ is disconnected.

In proving our results we  exploit the fact that if the degree sequence is “well-behaved”, and the 
partial matching we have constructed is small, then even conditioned on generating a multigraph without loops and parallel edges, every unmatched half-edge is \emph{almost} equally likely to be matched to every other unmatched half-edge which does not create a loop or a parallel edge.  

This is not the case for all degree sequences. For example, if the degree sequence is $\{1,1,..,1,n-1\}$ and has $n$ terms, then the probability that two vertices of degree $1$ are joined is zero. The complication for this example comes from the existence of a vertex of very high degree. Similar problems may arise if the sum of the degrees of the  neighbours of  some vertex $x$, denoted $D(x)$,  is large. We use  $D^* =D^*(\cD)$ to denote $\sum_{i=n-d(n)+1}^n d(i)$ which is an upper bound on $D(x)$ for all $x$ in every  graph with degree sequence $\cD$. We note that if $\Delta =o(\sqrt{m})$, then $D^*=o(m)$.

In order to handle the degree sequences  in our analysis, we use a technique known as {\em switching}, which we now introduce.

\subsection{Switching} 

Switching was introduced by Senior in 1951 \cite{senior1951}, under the name  \emph{transfusions}, to study representative graphs of partitions (see also \cite{BBK1972}). Havel showed in 1955 \cite{havel1955remark} that all graphs on the same degree sequence can be obtained from one another via switchings.  It was first used by McKay in 1981 \cite{mckay1981subgraphs} to study random graphs of fixed degrees and was the key to Wormald’s proof in the same year \cite{wormald1981asymptotic} that almost every $k$-regular graph is connected. We shall use it to both 
\begin{enumerate}[(a)]
\item prove that, if the degree sequence is “well-behaved” and the partial matching we have constructed is small, then every unmatched half-edge is almost equally likely to be matched to every other unmatched half-edge which does not create a loop or a parallel edge, and
\item handle badly behaved degree sequences.
\end{enumerate}

A graph $J$ is obtained from a graph $H$ by a   {\it switching} on a pair of oriented edges 
$xy$ and $uv$ of $H$  if $V(J)=V(H)$ and $E(J)=E(H)-\{xy,uv\} \cup \{xv,uy\}$ (see Figure \ref{fig:switch}).
\begin{figure}
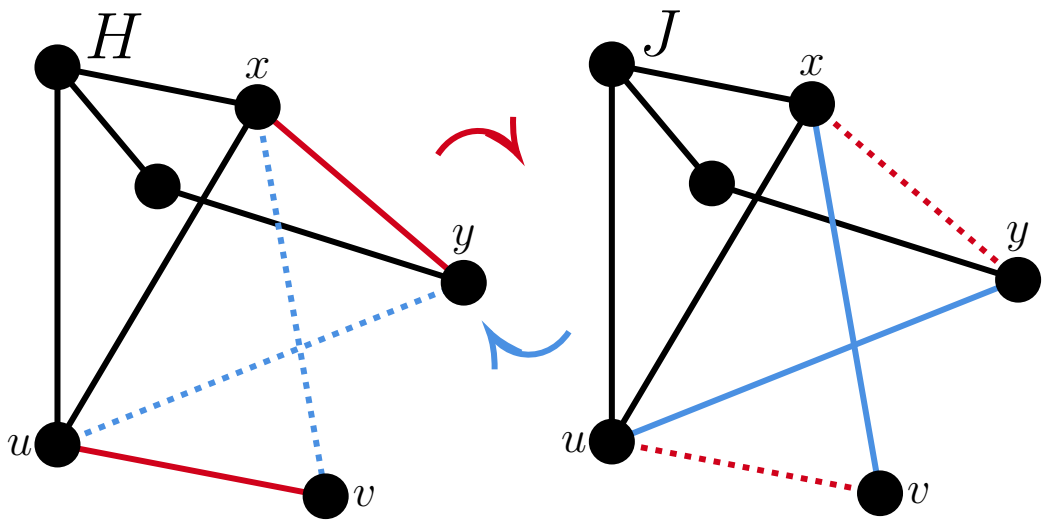

    \centering
\includestandalone{figures/switch-example}
    \caption{Example of a switching at $\{xy,uv\}$ to get edges $\{xv,uy\}$ from graphs $H$ to $J$. In this and all subsequent figures of switchings, edges used in the switching are red and edges used to undo the switching are blue. Dashed lines represent the absence of edges, unless otherwise noted in the caption.}
    \label{fig:switch}
\end{figure}
\begin{figure}
    \centering
\tikzset{every picture/.style={line width=0.75pt}} %set default line width to 0.75pt        

\begin{tikzpicture}[x=0.75pt,y=0.75pt,yscale=-1,xscale=1]
%uncomment if require: \path (0,221); %set diagram left start at 0, and has height of 221

%Straight Lines [id:da9545331945450726] 
\draw [color={rgb, 255:red, 208; green, 2; blue, 27 }  ,draw opacity=1 ][fill={rgb, 255:red, 0; green, 0; blue, 0 }  ,fill opacity=1 ][line width=2.25]    (130,32) -- (536.02,71.61) ;
\draw [shift={(540,72)}, rotate = 185.57] [color={rgb, 255:red, 208; green, 2; blue, 27 }  ,draw opacity=1 ][line width=2.25]    (17.49,-5.26) .. controls (11.12,-2.23) and (5.29,-0.48) .. (0,0) .. controls (5.29,0.48) and (11.12,2.23) .. (17.49,5.26)   ;
%Straight Lines [id:da21137804061530352] 
\draw [color={rgb, 255:red, 208; green, 2; blue, 27 }  ,draw opacity=1 ][fill={rgb, 255:red, 0; green, 0; blue, 0 }  ,fill opacity=1 ][line width=2.25]    (130,32) -- (536,32) ;
\draw [shift={(540,32)}, rotate = 180] [color={rgb, 255:red, 208; green, 2; blue, 27 }  ,draw opacity=1 ][line width=2.25]    (17.49,-5.26) .. controls (11.12,-2.23) and (5.29,-0.48) .. (0,0) .. controls (5.29,0.48) and (11.12,2.23) .. (17.49,5.26)   ;
%Straight Lines [id:da7463657320732184] 
\draw [color={rgb, 255:red, 208; green, 2; blue, 27 }  ,draw opacity=1 ][fill={rgb, 255:red, 0; green, 0; blue, 0 }  ,fill opacity=1 ][line width=2.25]    (130,32) -- (536.07,111.23) ;
\draw [shift={(540,112)}, rotate = 191.04] [color={rgb, 255:red, 208; green, 2; blue, 27 }  ,draw opacity=1 ][line width=2.25]    (17.49,-5.26) .. controls (11.12,-2.23) and (5.29,-0.48) .. (0,0) .. controls (5.29,0.48) and (11.12,2.23) .. (17.49,5.26)   ;
%Straight Lines [id:da3069183405255418] 
\draw [color={rgb, 255:red, 74; green, 144; blue, 226 }  ,draw opacity=1 ][fill={rgb, 255:red, 0; green, 0; blue, 0 }  ,fill opacity=1 ][line width=2.25]    (540,132) -- (133.93,52.77) ;
\draw [shift={(130,52)}, rotate = 11.04] [color={rgb, 255:red, 74; green, 144; blue, 226 }  ,draw opacity=1 ][line width=2.25]    (17.49,-5.26) .. controls (11.12,-2.23) and (5.29,-0.48) .. (0,0) .. controls (5.29,0.48) and (11.12,2.23) .. (17.49,5.26)   ;
%Straight Lines [id:da5518412425858183] 
\draw [color={rgb, 255:red, 74; green, 144; blue, 226 }  ,draw opacity=1 ][fill={rgb, 255:red, 0; green, 0; blue, 0 }  ,fill opacity=1 ][line width=2.25]    (540,132) -- (133.98,92.39) ;
\draw [shift={(130,92)}, rotate = 5.57] [color={rgb, 255:red, 74; green, 144; blue, 226 }  ,draw opacity=1 ][line width=2.25]    (17.49,-5.26) .. controls (11.12,-2.23) and (5.29,-0.48) .. (0,0) .. controls (5.29,0.48) and (11.12,2.23) .. (17.49,5.26)   ;
%Straight Lines [id:da7292591771306338] 
\draw [color={rgb, 255:red, 74; green, 144; blue, 226 }  ,draw opacity=1 ][fill={rgb, 255:red, 0; green, 0; blue, 0 }  ,fill opacity=1 ][line width=2.25]    (540,132) -- (134,132) ;
\draw [shift={(130,132)}, rotate = 360] [color={rgb, 255:red, 74; green, 144; blue, 226 }  ,draw opacity=1 ][line width=2.25]    (17.49,-5.26) .. controls (11.12,-2.23) and (5.29,-0.48) .. (0,0) .. controls (5.29,0.48) and (11.12,2.23) .. (17.49,5.26)   ;
%Straight Lines [id:da6638808092649823] 
\draw [color={rgb, 255:red, 74; green, 144; blue, 226 }  ,draw opacity=1 ][fill={rgb, 255:red, 0; green, 0; blue, 0 }  ,fill opacity=1 ][line width=2.25]    (540,132) -- (133.98,171.61) ;
\draw [shift={(130,172)}, rotate = 354.43] [color={rgb, 255:red, 74; green, 144; blue, 226 }  ,draw opacity=1 ][line width=2.25]    (17.49,-5.26) .. controls (11.12,-2.23) and (5.29,-0.48) .. (0,0) .. controls (5.29,0.48) and (11.12,2.23) .. (17.49,5.26)   ;
%Straight Lines [id:da30585089779492647] 
\draw [color={rgb, 255:red, 74; green, 144; blue, 226 }  ,draw opacity=1 ][fill={rgb, 255:red, 0; green, 0; blue, 0 }  ,fill opacity=1 ][line width=2.25]    (540,132) -- (133.93,211.23) ;
\draw [shift={(130,212)}, rotate = 348.96] [color={rgb, 255:red, 74; green, 144; blue, 226 }  ,draw opacity=1 ][line width=2.25]    (17.49,-5.26) .. controls (11.12,-2.23) and (5.29,-0.48) .. (0,0) .. controls (5.29,0.48) and (11.12,2.23) .. (17.49,5.26)   ;
%Shape: Circle [id:dp30832539139384263] 
\draw  [fill={rgb, 255:red, 0; green, 0; blue, 0 }  ,fill opacity=1 ][line width=0.75]  (120,42) .. controls (120,36.48) and (124.48,32) .. (130,32) .. controls (135.52,32) and (140,36.48) .. (140,42) .. controls (140,47.52) and (135.52,52) .. (130,52) .. controls (124.48,52) and (120,47.52) .. (120,42) -- cycle ;
%Shape: Circle [id:dp9010202256784454] 
\draw  [fill={rgb, 255:red, 0; green, 0; blue, 0 }  ,fill opacity=1 ][line width=0.75]  (120,82) .. controls (120,76.48) and (124.48,72) .. (130,72) .. controls (135.52,72) and (140,76.48) .. (140,82) .. controls (140,87.52) and (135.52,92) .. (130,92) .. controls (124.48,92) and (120,87.52) .. (120,82) -- cycle ;
%Shape: Circle [id:dp22634353654862172] 
\draw  [fill={rgb, 255:red, 0; green, 0; blue, 0 }  ,fill opacity=1 ][line width=0.75]  (120,122) .. controls (120,116.48) and (124.48,112) .. (130,112) .. controls (135.52,112) and (140,116.48) .. (140,122) .. controls (140,127.52) and (135.52,132) .. (130,132) .. controls (124.48,132) and (120,127.52) .. (120,122) -- cycle ;
%Shape: Circle [id:dp0058067523122686815] 
\draw  [fill={rgb, 255:red, 0; green, 0; blue, 0 }  ,fill opacity=1 ][line width=0.75]  (120,162) .. controls (120,156.48) and (124.48,152) .. (130,152) .. controls (135.52,152) and (140,156.48) .. (140,162) .. controls (140,167.52) and (135.52,172) .. (130,172) .. controls (124.48,172) and (120,167.52) .. (120,162) -- cycle ;
%Shape: Circle [id:dp3482324295793858] 
\draw  [fill={rgb, 255:red, 0; green, 0; blue, 0 }  ,fill opacity=1 ][line width=0.75]  (120,202) .. controls (120,196.48) and (124.48,192) .. (130,192) .. controls (135.52,192) and (140,196.48) .. (140,202) .. controls (140,207.52) and (135.52,212) .. (130,212) .. controls (124.48,212) and (120,207.52) .. (120,202) -- cycle ;
%Shape: Circle [id:dp9343188065712338] 
\draw  [fill={rgb, 255:red, 0; green, 0; blue, 0 }  ,fill opacity=1 ][line width=0.75]  (530,42) .. controls (530,36.48) and (534.48,32) .. (540,32) .. controls (545.52,32) and (550,36.48) .. (550,42) .. controls (550,47.52) and (545.52,52) .. (540,52) .. controls (534.48,52) and (530,47.52) .. (530,42) -- cycle ;
%Shape: Circle [id:dp16171879338827777] 
\draw  [fill={rgb, 255:red, 0; green, 0; blue, 0 }  ,fill opacity=1 ][line width=0.75]  (530,82) .. controls (530,76.48) and (534.48,72) .. (540,72) .. controls (545.52,72) and (550,76.48) .. (550,82) .. controls (550,87.52) and (545.52,92) .. (540,92) .. controls (534.48,92) and (530,87.52) .. (530,82) -- cycle ;
%Shape: Circle [id:dp7557633005352111] 
\draw  [fill={rgb, 255:red, 0; green, 0; blue, 0 }  ,fill opacity=1 ][line width=0.75]  (530,122) .. controls (530,116.48) and (534.48,112) .. (540,112) .. controls (545.52,112) and (550,116.48) .. (550,122) .. controls (550,127.52) and (545.52,132) .. (540,132) .. controls (534.48,132) and (530,127.52) .. (530,122) -- cycle ;

% Text Node
\draw (118,122) node [anchor=east] [inner sep=0.75pt]  [font=\Huge]  {$\mathcal{A}$};
% Text Node
\draw (552,82) node [anchor=west] [inner sep=0.75pt]  [font=\Huge]  {$\mathcal{B}$};

\end{tikzpicture}
\caption{If every graph in $\cA$ has at most $\Delta(\cA)$ switchings (in red) into $\cB$, while every graph in $\cB$ has at least $\delta(\cB)$ switchings (in blue) into $\cA$, then $|\cB|\leq\frac{\Delta(\cA)}{\delta(\cB)}|\cA|$.}
    \label{fig:switch-fams}
\end{figure}
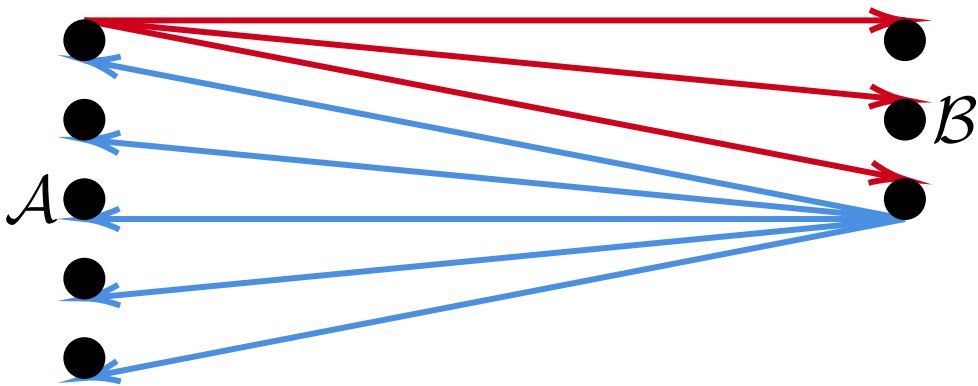
We note that a switching is only possible if $x\neq v, u \neq y$ and $xv, uy \not\in E(G)$. 
Furthermore, if $J$ is obtained from $H$ by  switching on $xy$ and $uv$ then 
it is also obtained from $H$ by switching on $yx$ and $vu$. Thus there are either no or exactly two switchings from $H$ to $J$. Finally, if $J$ is obtained from $H$ by switching on $xy$ and $uv$ 
then $H$ is obtained from $J$ by switching on $xv$ and $uy$. So  there are two switchings from 
$H$ to $J$ if and only if there are two switchings from $J$ to $H$. We also note that if we have a matching on half-edges associated with $H$ and $J$, the switching is also  
a switching  on two oriented edges of that matching.

Given two disjoint families ${\mathcal A}$ and ${\mathcal B}$ of graphs with degree sequence ${\mathcal D}$, 
we can compare their relative sizes by considering switchings between them. Specifically if  for every graph $H$  in ${\mathcal A}$ there are at most  $\Delta(\cA)$ graphs in ${\mathcal B}$ which can be obtained from $H$ by a switching and for every graph $J$  in ${\mathcal B}$ there are at least    $\delta(\cB)$ graphs in ${\mathcal A}$ which can be obtained from $H$ by a switching then, as illustrated in Figure \ref{fig:switch-fams}, $$|\cB|\le \frac{\Delta(\cA)}{\delta(\cB)}|\cA|.$$
It is this observation which allows us to exploit an analysis of switching.

To illustrate its power, especially when degrees are bounded, we prove the following lemma.

\begin{lem}\label{newlem:1/m}
Suppose  that   $N$  is a  matching on    the half-edges of size at most $\frac{m}{8}$, which is extendable to a full matching giving a simple graph,  and that $h_v$ and $h_w$ are half-edges not matched in $N$ corresponding to distinct vertices $v$ and $w$.  Suppose either that $w$ has degree 
at most $\frac{\sqrt{m}}{10}$ or that $D^* \le \frac{m}{100}$. Then the conditional probability, given $N \subseteq {\mathcal R}$, that $h_vh_w$ is an edge of $R$ and that $D(v)<\frac{m}{100}$, is at most $\frac{4}{3m}$. 
\end{lem}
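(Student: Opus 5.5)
We compare, by switching, two families of matchings that both contain $N$ and yield simple graphs. Let $\cB$ be those matchings in which $h_vh_w$ is an edge and $D(v)<\frac{m}{100}$. Let $\cA$ be those matchings in which $h_v$ is matched to some half-edge $h_y$ with $h_vh_y\notin N$, the matching is simple, and $h_w$ is matched to some $h_x$. Conditioned on $N\subseteq\cR$, matchings are uniform, and $\cA\cup\cB$ covers every extension of $N$. So it suffices to show $|\cB|\le\frac{4}{3m}|\cA\cup\cB|$, which follows from $|\cB|\le \frac{4}{3m-4}|\cA|$. We then apply the counting principle of Figure~\ref{fig:switch-fams} to switchings on half-edge matchings.

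\emph{Forward switchings from $\cB$.} Take $H\in\cB$ with $h_vh_w\in H$. Choose an oriented edge $h_xh_y$ of $H$ not in $N$, and switch it with $h_wh_v$ to obtain $h_vh_y$ and $h_wh_x$. For the result to be a simple graph and to lie in $\cA$, we need:
\begin{itemize}
\item $y\ne v$ and $x\ne w$;
\item $vy\notin E(H)$ and $wx\notin E(H)$;
\item the edge $xy$ is not in $N$.
\end{itemize}
The number of oriented edges is $2m$. From this we subtract the following:
\begin{itemize}
\item Edges of $N$: at most $2\cdot\frac m8=\frac m4$.
\item Edges with $y$ a neighbour of $v$ or $y=v$: at most $D(v)+d(v)$ of them.
\item Edges with $x$ a neighbour of $w$ or $x=w$: at most $D(w)+d(w)$.
\end{itemize}
Using $D(v)<\frac m{100}$, and either $D^*\le\frac m{100}$ or $d(w)\le\frac{\sqrt m}{10}$ (which gives $D(w)\le d(w)\Delta$), we must bound these losses. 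The second case is where care is needed: $D(w)$ counts oriented edges whose tail is a neighbour of $w$. Instead of bounding $D(w)$ by $d(w)\Delta$, I would count the bad edges directly as pairs (neighbour $x'$ of $w$, half-edge at $x'$). Where this is too large, I would flip the orientation choice and use $v$'s side instead, since $D(v)$ is controlled and the roles are symmetric. This yields at least $2m-\frac m4-\frac{m}{50}-o(m)\ge \frac{3m}{2}$ valid switchings for each $H\in\cB$, say $\delta\ge\frac{3m}{2}-2$.

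\emph{Reverse switchings into $\cB$.} Take $J\in\cA$. The only switching producing a member of $\cB$ is on the edges $h_vh_y$ and $h_wh_x$ in a fixed orientation, and these edges are determined by $J$. So each $J$ has at most one (two, counting both orientations, which matches the doubled forward count) switching into $\cB$. With consistent conventions, $\Delta(\cA)\le 2$ against $\delta(\cB)\ge 3m-4$. Hence $|\cB|\le\frac{2}{3m-4}|\cA|$, comfortably within $\frac{4}{3m}$ even after conditioning normalisation.

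\emph{Main obstacle.} The delicate step is the lower bound on forward switchings in the case $d(w)\le\sqrt m/10$ without a global $D^*$ bound. Here $D(w)$ could be huge, because $w$'s neighbours may have large degree. The fix I would try first is to notice that we may choose the orientation freely: a bad $x$ (adjacent to $w$) only kills the switchings with that orientation. We can then use the condition $D(v)<\frac m{100}$, imposed on $\cB$, to control the other endpoint. We also use that each neighbour $x'$ of $w$ contributes at most $d(x')$ bad oriented edges, while the factor-of-$2$ slack (needing only roughly $\frac34\cdot 2m$ good edges) absorbs the rest. Alternatively, we restrict to edges $xy$ whose both endpoints avoid $N(w)\cup N(v)$, and bound the excluded mass using $d(w)\le\sqrt m/10$ together with simplicity.
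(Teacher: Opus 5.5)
Your switching is the same as the paper's (swap $h_vh_w$ with an edge outside $N$; the reverse switch is then forced), and your oriented count is fine when $D^*\le\frac m{100}$. The gap is the other case, $d(w)\le\frac{\sqrt m}{10}$ with $D^*$ large, which is the only reason the lemma has that hypothesis. There you never actually bound the number of forward switchings. The bound you assert, at least $\frac{3m}{2}-2$ valid oriented edges $h_xh_y$ and hence $\frac{2}{3m-4}$, is false.

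Take $d(v)=1$, and let $w$ be adjacent to $v$ and to about $\frac{\sqrt m}{10}$ vertices of degree about $10\sqrt m$ whose other neighbours are leaves. Then $D(v)=d(w)<\frac m{100}$. But almost every edge $xy$ has exactly one endpoint in $N(w)$, so only one of its two orientations is admissible, and there are only about $m$ valid oriented switchings. Your fallback of discarding all edges meeting $N(w)$ fails for the same reason: their number is governed by $D(w)$, which can be close to $m$, not by $d(w)$. Likewise, the remark that each $x'\in N(w)$ kills at most $d(x')$ oriented edges just sums back to $D(w)$. No ``factor-of-$2$ slack'' in your oriented bookkeeping absorbs this.

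The missing observation is to count unoriented edges and require only \emph{one} admissible orientation for each.
\begin{itemize}
\item Take $xy\notin N$ with neither $x$ nor $y$ in $N(v)$. This also excludes $x,y\in\{v,w\}$, and it discards at most $D(v)<\frac m{100}$ edges.
\item For such an edge, $vx,vy\notin E$. So the switch producing $\{vx,wy\}$ is valid unless $y\in N(w)$, and the switch producing $\{vy,wx\}$ is valid unless $x\in N(w)$.
\item Both fail only when $xy$ lies inside $N(w)$. There are at most $\binom{d(w)}{2}\le\frac m{200}$ such edges, or at most $D^*/2$ in the other case.
\end{itemize}
Hence at least $m-\frac m8-\frac m{100}-\frac m{200}>\frac{3m}{4}$ edges each yield a distinct switched matching, and each of these has exactly one reverse switching into $\cB$. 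This gives the stated bound $\frac{4}{3m}$; your sharper $\frac{2}{3m-4}$ does not follow.

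Separately, fix the definition of $\cA$. As written it contains every extension of $N$, so it is not disjoint from $\cB$. Define it by $h_vh_w\notin\cR$ if you want to use disjointness.
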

\begin{proof}
    Let $\cF$ denote the event that $N$ is a submatching of ${\cal R}$,   $D(v)<\frac{m}{100}$, and  $h_vh_w\in R_\cD$, Let $\cF'$ denote the event that  $N$ is a submatching of ${\cal R}$, $h_vh_w\notin R_\cD$. We count switchings  between $\cF$ and $\cF'$. There are  at most two switchings from a graph in  $\cF'$ to a graph in $\cF$, using $\{h_vh,h_wh'\}$ (where $h,h'$ are the half-edges matched to  $h_v,h_w$ respectively). To switch from a graph in $\cF$ to one in $\cF'$, it suffices to switch $h_vh_w$ with any edge not in $N$, such that the result is a simple graph. There are at most $\frac{m}{8}$ edges in $N$, at most $\frac{m}{100}$ edges incident to a neighbour of $v$, and at most $\min(\binom{d(w)}{2}, D^*)<\frac{m}{100}$ edges  both of whose  endpoints are neighbours of $w$. This leaves more than $\frac{3m}{4}$ edges which can be  switched with   $h_vh_w$ and hence at least $\frac{3m}{2}$ switchings. So $\p{\cF}\leq\frac{4}{3m}$.
\end{proof}

\subsection{Previous Results}\label{sub:prev}
The connectivity of $G(\cD)$ was first studied by Wormald in 1981 \cite{wormald1981asymptotic}, who proved that if $\cD$ has $d(1)=k \ge3$ and $d(n)\leq C$ constant, then $G(\cD)$ is connected whp. Luczak\cite{luczak1992sparse} showed in 1992 that for any $\cD$ where $d(1)\geq3$ and $d(n)\leq n^{0.01}$, $G(\cD)$ is connected whp, and provided a characterization for connectivity whp when $d(1)=2$ and $d(n)\leq n^{0.01}$. For $d$-regular degree sequences, Cooper, Frieze and Reed \cite{COOPER_FRIEZE_REED_2002} showed in 2002 that they are connected whp for any $3\leq d\leq\eps n$ for a small constant $\eps>0$.\par
Federico and van der Hofstad \cite{federico2017critical} proved in 2017 that for sequences of degree sequences $\{\cD_n\}$ such that for some constant $C$ and every $n$,  $\sum_{i=1}^n d(i)^2<Cn $, we have the following: $\p{G({\cal D}_n)\text{ is connected}}=1-o(1)$ precisely if the number of vertices
of degree $1$ is $o(\sqrt{m})$ and the number of vertices of degree $2$ is $o(m)$. This latter result is a consequence of a more general result  on the configuration model, which translates to the uniform simple graph model when the mean of the square of the degree of a uniformly random vertex is bounded.\par
Joos, Perarnau, Rautenbach, and Reed \cite{joos2018determine}, building on earlier work of Molloy and Reed \cite{molloy1995critical} from 1995, stated in 2018 precise conditions on a sequence of 
degree sequences which ensures that a graph  chosen uniformly from those with the given degree sequence has a giant 
component\footnote{This means that there is some $\epsilon>0$ such that the probability $G$ has a component with  at least $\epsilon n$ vertices 
is $1-o(1)$.}.\par

Gao and Ohapkin \cite{gao2025subgraphprobabilityrandomgraphs} proved in 2023 that if either:
\begin{enumerate}[(a)]
    \item $D^*=o(m)$, or
    \item the total degree of vertices of degree at least $\frac{\sqrt{2m}}{\log{2m}}$ is no more than $(1-\Omega(1))m$,
\end{enumerate}
then if $n_1=o(\sqrt{m})$ and $n_2=o(m)$ then $G(\cD)$ is connected whp. They also proved a partial converse, namely that under assumption (a), for all $c>0$ there is an $\epsilon>0$, such that  if $n_1>c\sqrt{m}$ or $n_2>cm$, then $G(\cD)$ is disconnected with  probability greater than $\epsilon$  for all large $m$.\par
Our results strengthen the results  of Gao and Ohapkin in that we make no assumptions and that we give  upper  bounds 
on the probability that $G(\cD)$ is disconnected rather than simply proving it is $o(1)$. Furthermore, under the following significant weakening of their assumption (a), namely under the assumption (a')  $D^*\le \frac{m}{3}$, we can show (see \ref{sec:concl}) that the  bounds in Theorem \ref{main} are tight up to a constant factor provided $n_1>1$, $n_2>2$ or $n_3=\Omega(n^{1/4})$. We are able to prove our bounds are  tight with a much weaker assumption because of Theorem \ref{thekeyresult}, which allows us to restrict our analysis to determining the probability of the existence 
of very small components.

\section{ A Proof Outline}\label{sec:outline}

\begin{figure}
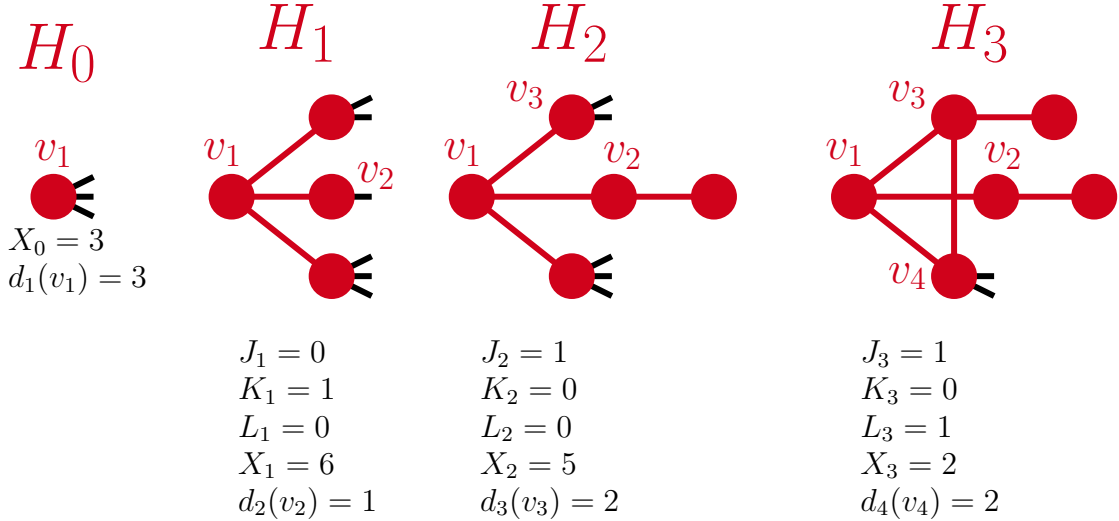

    \centering
    \includestandalone{figures/explore}
    \caption{The starting vertex and the first $3$ iterations of an exploration. The figures illustrate the start of each iteration. The $H_i$ are depicted in red for $i=0,\dots,3$ and the open half-edges are black.}
    \label{fig:explore}
\end{figure}
We  explore the  component containing each  vertex $v$  by  iteratively growing a   tree $T$ rooted at $v$ until we have constructed a spanning tree for the component. Along with $T$  we build  a subgraph $H$ of $G[V(T)]$ and the  submatching $N$  of $\cR_\cD$ corresponding to $H$.  In each iteration, for some vertex $v'$ in the current tree we expose all the matching edges leaving half-edges incident to $v'$ which have not already been exposed.
  We generate $G(\cD)$  and the 
tree at the same time, choosing  the set of edges leaving a vertex so as to generate a uniformly chosen graph
with the given degree sequence conditioned on the existence of the edges which have already been exposed.\par
At the start of iteration $i$ we have exposed 
some subtree $T_{i-1}$ of $T$ and a subgraph $H_{i-1}$ of the subgraph of $G$ induced 
by $T_i$, which corresponds to a submatching $N_{i-1}$ of $N$.  We call a half-edge {\em open} when its endpoint has been revealed to belong to $T$, but the  half-edge  it is adjacent to in $R_\cD$ has not yet been chosen.
We let $X_{i-1}$ be the number of open half-edges at the start  of iteration $i$, and $d_i(v)$ is the number of open half-edges of $v$ for every $v\in[n]$. 
Initially $T_0=H_0$  is simply some fixed vertex $v=v_1$,  and we have $d(v)=d_1(v_1)$ open  half-edges corresponding to $v$, so $X_0=d(v)$. 

In iteration $1 \le i \le m$, provided  $X_{i-1}>0$, we choose a vertex $v_i$ such that $d_i(v_i)$ is minimal among vertices $\{v\in V(H_{i-1}):d_i(v_i)>0\}$, breaking ties by prioritizing vertices with smaller labels, and reveal the edges of $\cR_{\cD}$
containing these half-edges. For each open half-edge $h$  incident to $v_i$ matched to 
a half-edge incident to a vertex $w$, we add $v_iw$ to $H$, and delete $h$ from the set of open half-edges.  If $w$  was not in $H$  we also add $v_iw$ to $T$ and add the $d(w)-1$ half-edges incident  
 to $w$ which were not matched to $h$ to the set of open half-edges. If $w$ is in $T$ we delete the half-edge corresponding to  $w$ matched to $h$   from the set of open half-edges. 
 
Thus, if $w$ is in $T$, the number   of open half-edges decreases by 2,
while if $w$ is not in $T$, the number of open half-edges changes by $d(w)-2$; so it decreases  by 1 if $d(w)=1$,
stays the same if $d(w)=2$, and increases otherwise. 

We will have completely explored the component when $X_i$ drops to $0$. We  treat 
$X_i$ as the position of a random walk which begins at $X_0=d(v)$ and where the $i$'th 
step has size  $X_i-X_{i-1}$. Our focus is on the behaviour of this walk. As is typical, we first compute $\Cexp{X_i-X_{i-1}}{H_{i-1}}$ and then show $X_i$ is concentrated around its conditional expected value.\par
To simplify the analysis of this walk, if there are no more open half-edges, we set
$d_i(v_i)=X_i-X_{i-1}=0$. This allows us to assume the walk takes $m$ steps.\par
We let $J_i$, $K_i$, and $L_i$ be, respectively, the number of half-edges  incident to $v_i$ which, during the $i$'th iteration, were exposed to be matched to a half-edge corresponding to 
a vertex of degree 1, a vertex of degree 2, or a vertex of $H_{i-1}$. Thus
\begin{equation*}
    X_i-X_{i-1} \ge d_i(v_i)-2J_i-K_i-3L_i \ge -2d_i(v_i).
\end{equation*}
The first step in our proof of Theorems \ref{thekeyresult} is to show in Section \ref{sectheyjoin}  that 
the probability there are two components of size $\Omega(m)$ is  $O(m^{-8 \log \log m})$.  
Equivalently  we show that the probability that there are $u$ and $v$  in different 
components such that for both  the exploration from $u$ 
and $v$, at some iteration $i$, we have $X_i+|E(H_i)|=\Omega(m)$ is  $O(m^{-8 \log \log m})$. 
Our approach is guided by the intuition that if there are a large number of edges coming out of two disjoint sets, there will be at least one edge joining them with probability close to 1.
This implies we need only show that if $X_i+|E(H_i)|=\Omega(m)$ for some iteration $i$, then there exists another iteration $1\leq i'\leq i$ where $X_{i'}=\Omega(m)$.

If $n_1,n_2,X_{i-1} \le \frac{m}{10^6}$  and $D^* \le \frac{m}{100}$, then applying Lemma \ref{newlem:1/m}, we obtain that, conditioning on $H_{i-1}$ throughout, each of $J_i$, $K_i$ and $L_i$ has expectation at most 
$\frac{4}{3(10^6)}$ and hence $\E{X_i-X_{i-1}}>(1-4(10^{-6}))d(v_i)$. Thus, the expected increase in the number of open half-edges during the iteration is nearly  as large as the increase in the number of edges of $H_i$, and the expected increase in $X_i$ is nearly half the expected increase in $X_i+|E(H_i)|$.   We show that in this case, the 
walk's position  is reasonably concentrated around its expectation.   This  allows us to show that if $v$ is in a component of size $\Omega(m)$ then  it is very likely that for some $i$, 
$X_i=\Omega(m)$.  

If  $n_1,n_2,X_{i-1} \le \frac{m}{10^6}$ and   $D^* \ge \frac{m}{100}$,  then we are prevented 
from  applying Lemma \ref{newlem:1/m} and  the above approach only because $D(v_i)$ may exceed $\frac{m}{100}$ and the degree $d(w)$ of the other vertex in Lemma \ref{newlem:1/m} may exceed $\frac{\sqrt{m}}{10}$. We handle the second problem by accepting as a positive outcome the existence of a vertex of degree at least $\sqrt{m}(\log{m})^{-2}$ in $T$, because we will show that all such vertices are in the same component whp when $D^*\geq\frac{m}{100}$. So, until this occurs,  both $v_i$ and all its neighbours in $T$ have degree well below $\sqrt{m}$. 
Thus, if $D(v_i)>\frac{m}{100}$, then it is not hard to see $v_i$ must have a neighbour of degree at least $\sqrt{m}>5d(v_i)$
outside $T$. But this implies that $X_i-X_{i-1}>5d_i(v_i)-2-2J_i-K_i-3L_i \ge 2d(v_i)$ which is an even 
better bound than what we needed. This  allows us to show 
that if $v$ is in a component of size $\Omega(m)$ then  it is very likely that  either 
for some $i$, $X_i=\Omega(m)$ or the component containing $v$ contains a vertex of degree 
$\sqrt{m}(\log{m})^{-2}$. 

To complete this first step in our proof of Theorem \ref{thekeyresult}, we show the following, which confirms the above  intuitions:
\begin{enumerate}
\item If $D^*\le \frac{m}{100}$ then for any vertices $u$ and $v$, the probability that $u$ and $v$ are in different components, but the explorations from both $u,v$ reach $X_i=\Omega(m)$ at some point, is $O(m^{- 8 \log \log m})$.
\item If $D^*>\frac{m}{100}$ then the probability that  there exist two vertices of degree
$\sqrt{m}(\log{m})^{-2}$  in different components  is $O(m^{- 8 \log \log m})$.
\item Furthermore, if $D^*>\frac{m}{100}$ then for any vertices $u,v$ where $d(u)\geq\sqrt{m}(\log{m})^{-2}$, the probability that $u,v$ are in different components, and  
the exploration from  $v$ reaches $X_i=\Omega(m)$  at some point, is $O(m^{- 8 \log \log m})$.
\end{enumerate}

In Section \ref{seciteration}, we  provide a deeper analysis  of the behaviour of one iteration, focusing on  bounding the probability that $J_i$, $K_i$ and $L_i$ take specific values via switching arguments. In Section \ref{seclater}, we  first use the results of Section
\ref{seciteration} to bound the probability that there is a component of $G$ whose size exceeds  
 $4(\log m)^4$ but is $o(m)$. We then combine this result with the results of 
 Section \ref{sectheyjoin} to prove Theorem \ref{thekeyresult}. 
Finally in Section \ref{secearly}, we  use the results of Section \ref{seciteration} to analyze the probability of the existence of components
of size less than $4(\log m)^4$ thereby completing the proof of Theorem \ref{main}.

To conclude this section, we explain the reason for our choice of $v_i$
as the vertex of $T$ with open half-edges minimizing the number of such edges. 
This choice of $v_i$ allows us to deterministically bound $L_i$ which helps bound the 
maximum  absolute value of a step. This is useful when proving the position of the walk is 
concentrated around its expected value. Specifically, as Figure \ref{fig:Li} illustrates, by our choice of $v_i$, each of the $L_i$ vertices of $H_{i-1}$ 
to which $v_i$ was directly matched  during the $i$'th iteration was incident  to at least $d_i(v_i) \ge L_i$ 
open half-edges at the start of the iteration, so $X_{i-1}  \ge d_i(v_i)(L_i+1)$. This implies 
that $X_i \ge L_i(L_i-1)$ and that $L_i \le \sqrt{X_{i-1}}$. 
\begin{figure}
    \centering
    \tikzset{every picture/.style={line width=0.75pt}} %set default line width to 0.75pt        

\begin{tikzpicture}[x=0.75pt,y=0.75pt,yscale=-1,xscale=1]
%uncomment if require: \path (0,143); %set diagram left start at 0, and has height of 143

%Straight Lines [id:da052009445676199695] 
\draw [line width=2.25]    (442,70.4) -- (442,100.4) ;
%Straight Lines [id:da5310437452987968] 
\draw [line width=2.25]    (561,70.4) -- (561,100.4) ;
%Straight Lines [id:da8837003835362343] 
\draw [line width=2.25]    (480,70) -- (480,100) ;
%Curve Lines [id:da8279468845000445] 
\draw [color={rgb, 255:red, 208; green, 2; blue, 27 }  ,draw opacity=1 ][line width=2.25]    (604.5,67.23) .. controls (604.5,44.4) and (561,47.57) .. (561,70.4) ;
%Curve Lines [id:da7373880678576613] 
\draw [color={rgb, 255:red, 208; green, 2; blue, 27 }  ,draw opacity=1 ][line width=2.25]    (604.5,67.23) .. controls (604.4,22.73) and (481.65,25.9) .. (482,70.4) ;
%Curve Lines [id:da5283144253306312] 
\draw [color={rgb, 255:red, 208; green, 2; blue, 27 }  ,draw opacity=1 ][line width=2.25]    (604.5,67.23) .. controls (604.15,0.84) and (441.9,3.46) .. (442,70.4) ;
%Straight Lines [id:da5063637138470952] 
\draw [line width=2.25]    (482,70.4) -- (500,100) ;
%Straight Lines [id:da2500054984220863] 
\draw [line width=2.25]    (482,70.4) -- (472,100.4) ;
%Straight Lines [id:da2739093863624953] 
\draw [line width=2.25]    (442,70.4) -- (460,100) ;
%Straight Lines [id:da701837281908749] 
\draw [line width=2.25]    (442,70.4) -- (432,100.4) ;
%Curve Lines [id:da4586578646589333] 
\draw [color={rgb, 255:red, 0; green, 0; blue, 0 }  ,draw opacity=1 ][line width=2.25]    (234,70) .. controls (234,42.6) and (194,42.6) .. (194,70) ;
%Curve Lines [id:da7030705149542309] 
\draw [color={rgb, 255:red, 0; green, 0; blue, 0 }  ,draw opacity=1 ][line width=2.25]    (234,70) .. controls (233.9,16.6) and (113.65,16.6) .. (114,70) ;
%Curve Lines [id:da48821010887624317] 
\draw [color={rgb, 255:red, 0; green, 0; blue, 0 }  ,draw opacity=1 ][line width=2.25]    (234,70) .. controls (233.65,-9.67) and (73.9,-10.33) .. (74,70) ;
%Shape: Circle [id:dp5769828759264135] 
\draw  [color={rgb, 255:red, 208; green, 2; blue, 27 }  ,draw opacity=1 ][fill={rgb, 255:red, 208; green, 2; blue, 27 }  ,fill opacity=1 ][line width=2.25]  (74,60) .. controls (79.52,60) and (84,64.48) .. (84,70) .. controls (84,75.52) and (79.52,80) .. (74,80) .. controls (68.48,80) and (64,75.52) .. (64,70) .. controls (64,64.48) and (68.48,60) .. (74,60) -- cycle ;
%Shape: Circle [id:dp2690440203729305] 
\draw  [color={rgb, 255:red, 208; green, 2; blue, 27 }  ,draw opacity=1 ][fill={rgb, 255:red, 208; green, 2; blue, 27 }  ,fill opacity=1 ][line width=2.25]  (115,60) .. controls (120.52,60) and (125,64.48) .. (125,70) .. controls (125,75.52) and (120.52,80) .. (115,80) .. controls (109.48,80) and (105,75.52) .. (105,70) .. controls (105,64.48) and (109.48,60) .. (115,60) -- cycle ;
%Shape: Circle [id:dp05432550854042295] 
\draw  [color={rgb, 255:red, 208; green, 2; blue, 27 }  ,draw opacity=1 ][fill={rgb, 255:red, 208; green, 2; blue, 27 }  ,fill opacity=1 ][line width=2.25]  (194,60) .. controls (199.52,60) and (204,64.48) .. (204,70) .. controls (204,75.52) and (199.52,80) .. (194,80) .. controls (188.48,80) and (184,75.52) .. (184,70) .. controls (184,64.48) and (188.48,60) .. (194,60) -- cycle ;
%Shape: Circle [id:dp553715031230097] 
\draw  [color={rgb, 255:red, 208; green, 2; blue, 27 }  ,draw opacity=1 ][fill={rgb, 255:red, 208; green, 2; blue, 27 }  ,fill opacity=1 ][line width=2.25]  (234,60) .. controls (239.52,60) and (244,64.48) .. (244,70) .. controls (244,75.52) and (239.52,80) .. (234,80) .. controls (228.48,80) and (224,75.52) .. (224,70) .. controls (224,64.48) and (228.48,60) .. (234,60) -- cycle ;
%Shape: Circle [id:dp6330773952820273] 
\draw  [color={rgb, 255:red, 208; green, 2; blue, 27 }  ,draw opacity=1 ][fill={rgb, 255:red, 208; green, 2; blue, 27 }  ,fill opacity=1 ][line width=2.25]  (442,60.4) .. controls (447.52,60.4) and (452,64.88) .. (452,70.4) .. controls (452,75.92) and (447.52,80.4) .. (442,80.4) .. controls (436.48,80.4) and (432,75.92) .. (432,70.4) .. controls (432,64.88) and (436.48,60.4) .. (442,60.4) -- cycle ;
%Shape: Circle [id:dp23384238428119064] 
\draw  [color={rgb, 255:red, 208; green, 2; blue, 27 }  ,draw opacity=1 ][fill={rgb, 255:red, 208; green, 2; blue, 27 }  ,fill opacity=1 ][line width=2.25]  (482,60.4) .. controls (487.52,60.4) and (492,64.88) .. (492,70.4) .. controls (492,75.92) and (487.52,80.4) .. (482,80.4) .. controls (476.48,80.4) and (472,75.92) .. (472,70.4) .. controls (472,64.88) and (476.48,60.4) .. (482,60.4) -- cycle ;
%Shape: Circle [id:dp9379326068210156] 
\draw  [color={rgb, 255:red, 208; green, 2; blue, 27 }  ,draw opacity=1 ][fill={rgb, 255:red, 208; green, 2; blue, 27 }  ,fill opacity=1 ][line width=2.25]  (602,60.4) .. controls (607.52,60.4) and (612,64.88) .. (612,70.4) .. controls (612,75.92) and (607.52,80.4) .. (602,80.4) .. controls (596.48,80.4) and (592,75.92) .. (592,70.4) .. controls (592,64.88) and (596.48,60.4) .. (602,60.4) -- cycle ;
%Straight Lines [id:da27223696796041097] 
\draw [line width=2.25]    (561,70.4) -- (580,100) ;
%Straight Lines [id:da7442228011267132] 
\draw [line width=2.25]    (561,70.4) -- (551,100.4) ;
%Shape: Circle [id:dp1406923002595577] 
\draw  [color={rgb, 255:red, 208; green, 2; blue, 27 }  ,draw opacity=1 ][fill={rgb, 255:red, 208; green, 2; blue, 27 }  ,fill opacity=1 ][line width=2.25]  (561,60.4) .. controls (566.52,60.4) and (571,64.88) .. (571,70.4) .. controls (571,75.92) and (566.52,80.4) .. (561,80.4) .. controls (555.48,80.4) and (551,75.92) .. (551,70.4) .. controls (551,64.88) and (555.48,60.4) .. (561,60.4) -- cycle ;

% Text Node
\draw (602,83.8) node [anchor=north] [inner sep=0.75pt]  [font=\LARGE,color={rgb, 255:red, 208; green, 2; blue, 27 }  ,opacity=1 ]  {$v_{i}$};
% Text Node
\draw (345.5,37.5) node  [font=\Huge,color={rgb, 255:red, 0; green, 0; blue, 0 }  ,opacity=1 ] [align=left] {$\displaystyle \Longrightarrow $};
% Text Node
\draw (234,83.4) node [anchor=north] [inner sep=0.75pt]  [font=\LARGE,color={rgb, 255:red, 208; green, 2; blue, 27 }  ,opacity=1 ]  {$v_{i}$};
% Text Node
\draw (155,78.26) node [anchor=south] [inner sep=0.75pt]  [font=\LARGE,color={rgb, 255:red, 208; green, 2; blue, 27 }  ,opacity=1 ] [align=left] {$\displaystyle \dotsc $};
% Text Node
\draw (246,70) node [anchor=west] [inner sep=0.75pt]  [font=\Huge,color={rgb, 255:red, 208; green, 2; blue, 27 }  ,opacity=1 ]  {$H_{i-1}$};
% Text Node
\draw (429,72.5) node [anchor=east] [inner sep=0.75pt]  [font=\Huge,color={rgb, 255:red, 208; green, 2; blue, 27 }  ,opacity=1 ]  {$H_{i}$};
% Text Node
\draw (522,78.66) node [anchor=south] [inner sep=0.75pt]  [font=\LARGE,color={rgb, 255:red, 208; green, 2; blue, 27 }  ,opacity=1 ] [align=left] {$\displaystyle \dotsc $};
% Text Node
\draw (462.04,97.7) node [anchor=east] [inner sep=0.75pt]   [align=left] {$\displaystyle \dotsc $};
% Text Node
\draw (502.04,97.7) node [anchor=east] [inner sep=0.75pt]   [align=left] {$\displaystyle \dotsc $};
% Text Node
\draw (581.04,97.7) node [anchor=east] [inner sep=0.75pt]   [align=left] {$\displaystyle \dotsc $};

\end{tikzpicture}
    \caption{An illustration of $X_i\geq L_i(L_i-1)$. The fact that $v_i$ had the least amount of open half-edges forces each other vertex hit by a back edge to have at least $L_i-1$ other open half-edges.}
    \label{fig:Li}
\end{figure}
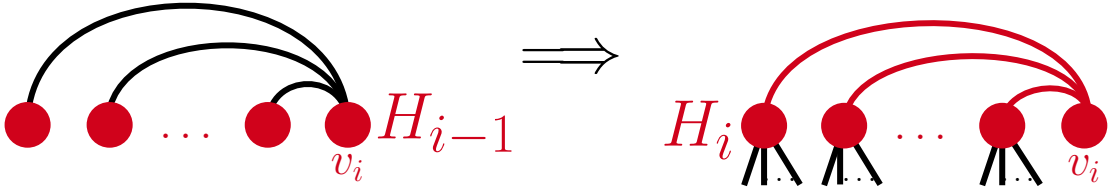

\section{The Vertices in Large Components Are All in The Same Component}
\label{sectheyjoin} 

In this section we discuss one angle to analyze the exploration, which allows us to take our first 
step towards proving Theorem \ref{thekeyresult}, by proving the following:
\begin{lem}
    \label{theyjoinmain}
    If $m$ is large enough and $n_1,n_2 \le \frac{m}{10^6}$, then for sufficiently small $\epsilon$,  the probability there 
    are two components each  of which has at least $\epsilon m$ edges  is at most  $m^{-8 \log \log m}$.  
\end{lem}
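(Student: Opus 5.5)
We follow the three-part plan sketched in Section \ref{sec:outline}, splitting into the cases $D^*\le \frac{m}{100}$ and $D^*>\frac{m}{100}$. Throughout, $\epsilon$ is a small constant. We run the exploration from a vertex $v$ and write $X_i$ for the number of open half-edges after iteration $i$. We take a union bound over the at most $n^2\le 4m^2$ pairs of starting vertices, so it suffices to bound each pair's bad event by $m^{-8\log\log m-3}$.

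\textbf{Step 1: reaching many open half-edges.} Assume $D^*\le\frac{m}{100}$ and stop the exploration from $v$ at the first iteration $\tau$ with $X_\tau\ge \epsilon' m$ or $|E(H_\tau)|\ge \epsilon m$, where $\epsilon'=\epsilon/10$, say.

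Before $\tau$ we have $|N_{i-1}|\le \frac{m}{8}$ and $X_{i-1}\le \frac{m}{10^6}$. Lemma \ref{newlem:1/m} then applies to every pair $(h,h')$ with $h$ open at $v_i$. Summing over the at most $n_1+n_2+X_{i-1}$ candidate partners gives $\Cexp{J_i+K_i+L_i}{H_{i-1}}\le 4\cdot 10^{-6}d_i(v_i)$. Hence $\Cexp{X_i-X_{i-1}}{H_{i-1}}\ge (1-10^{-5})d_i(v_i)$, while $|E(H_i)|-|E(H_{i-1})|\le d_i(v_i)$.

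Consider the supermartingale $M_i=X_i-\frac12(|E(H_i)|-|E(H_0)|)$, or more precisely its centered version. Its steps are bounded below by $-2d_i(v_i)$. The only large negative contribution is $L_i\le\sqrt{X_{i-1}}$, and the other terms have tails we can control via Lemma \ref{newlem:1/m}. A Freedman/Azuma-type inequality for increments with bounded variance should then show that
\[
\mathbf{P}\bigl\{|E(H_\tau)|\ge\epsilon m,\ X_\tau<\epsilon' m\bigr\}\le e^{-\Omega(\log^2 m)}\le m^{-\omega(\log\log m)}.
\]
The point is that the walk would have to fall $\Omega(m)$ below its drift. The variance is at most $\sum_i d_i(v_i)^2\le \Delta\cdot 2m$. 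This is fine when $\Delta\le m^{1-\delta}$, but it breaks down for huge degrees. I expect this concentration to be the main obstacle. I would first truncate steps at $\sqrt m$ and handle vertices of degree above $\sqrt m(\log m)^{-2}$ separately, as in Step 3.

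\textbf{Step 2: two large open sets must join.} Suppose explorations from $u$ and $v$ both reach $\ge\epsilon' m$ open half-edges while their components are disjoint. Continue the exploration from $u$ up to its stopping time, then start the exploration from $v$ conditional on the first one.

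Each of $v$'s open half-edges, when exposed, is matched into $u$'s open set $A$ with conditional probability at least $c|A|/m$. This lower bound comes from a reverse switching in the spirit of Lemma \ref{newlem:1/m}: switch an edge $h_vh$ together with an edge at a half-edge of $A$. The number of such switchings is about $|A|$, and there are $O(m)$ reverse switchings. Since $v$'s exploration must expose $\Omega(m)$ half-edges, each hitting $A$ with probability at least $c\epsilon'$, the probability that none hits $A$ is $e^{-\Omega(m)}$.

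One subtlety is that $u$'s open half-edges might be matched among themselves. We avoid this by exposing only the half-edges of $v$ and staying within the matching budget $\frac m8$.

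\textbf{Step 3: the case $D^*>\frac m{100}$.} Call a vertex heavy if its degree is at least $\sqrt m(\log m)^{-2}$.

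First we show that all heavy vertices lie in one component except with probability $m^{-8\log\log m}$. Heavy vertices have total degree $\Omega(m)$ because $D^*>\frac{m}{100}$. For two heavy vertices $x,y$, a switching count shows that the number of common-neighbour paths, or direct edges, from $x$ and $y$ into the $\Omega(m)$-degree heavy set is concentrated. The probability that their explorations avoid each other after $\Omega(\sqrt m(\log m)^{-2})$ exposures each is then $\exp(-\Omega((\log m)^{c}))$. If this is too weak, grow both explorations to $\Omega(m)$ open half-edges and apply Step 2.

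Next, we redo Step 1 with the stopping rule augmented by ``a heavy vertex enters $T$''. Before that happens, $D(v_i)>\frac m{100}$ forces a neighbour of $v_i$ outside $T$ with degree $\ge 5d(v_i)$, so the drift only improves. Hence every large component reaches either $\Omega(m)$ open half-edges or a heavy vertex. Combining this with Step 2, applied to the heavy vertex's exploration, completes the proof via the union bound.
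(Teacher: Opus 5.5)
You follow the outline of Section~\ref{sec:outline}, but the two places where the paper does its real work are either missing or rest on claims that fail.

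\textbf{Step~3.} The statement that all vertices of degree at least $\sqrt m(\log m)^{-2}$ lie in one component is asserted rather than proved, and its premise is false. $D^*>\frac m{100}$ does not give the heavy vertices total degree $\Omega(m)$: take $d(n)=\sqrt m(\log m)^{10}$ and the next $d(n)-1$ degrees about $\sqrt m(\log m)^{-10}$, so that $D^*\approx m$ while the heavy vertices carry $o(m)$ degree. Your fallback of growing explorations from heavy vertices to $\Omega(m)$ is unsupported. Your augmented Step~1 halts once a heavy vertex enters $T$, so it says nothing about an exploration started at one, and without that rule Lemma~\ref{newlem:1/m} no longer controls the bad matches. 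The paper needs two separate ingredients here.
\begin{itemize}
\item Lemma~\ref{lem:mlogm-same-comp} shows that $d(u)d(v)\ge m(\log m)^4$ (with both degrees at least $(\log m)^4$) forces $u,v$ into one component. It uses iterated switchings that add common neighbours while a small set $S$ still blocks all paths of length at most four. This yields Corollary~\ref{sec6cor1} when $d(n)\ge\sqrt m(\log m)^6$.
\item When $d(n)\le\sqrt m(\log m)^6$ and $D^*\ge\epsilon m$, the hub argument of Lemma~\ref{bigguyssticktogether} takes over. It uses the lower threshold $\sqrt m/\log^7 m$, so that the heavy set does carry $\Omega(m)$ degree. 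It first produces a vertex within distance three of $m^{1/8}$ heavy vertices, and then switches every heavy vertex onto that cluster.
\end{itemize}

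\textbf{Step~2.} This step also fails as written. Reaching $\epsilon' m$ open half-edges does not mean exposing $\Omega(m)$ half-edges: one neighbour of degree $m/500$, which is compatible with $D^*\le\frac m{100}$, achieves it in a single iteration. That part is repairable by continuing to expose polylogarithmically many of $v$'s open half-edges. The per-exposure lower bound $c|A|/m$, however, is not justified. For the switching $\{hx,ay\}\to\{ha,xy\}$, a choice of $a$ is invalid if its vertex is adjacent to that of $h$, or if $y$ equals or is adjacent to $x$. The only available bound on the number of invalid choices is of order $D^*$. Under $D^*\le\frac m{100}$ that bound exceeds $|A|=\epsilon' m$, so not a single valid switching is guaranteed.

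The missing idea is that of Lemma~\ref{theyjoin}. There one switches only in graphs where $S_u$ and $S_v$ are separated outside a small tracked vertex set $Z$ and edge set $Y$. The separation itself then makes every switching valid, with no hypothesis on $D^*$. One then iterates over the classes $\mathcal F_i$ with between $i$ and $2i$ cross edges, obtaining $m^{-m^{1/4}}$. Case~(ii) of that lemma attaches a heavy vertex through its at least $\sqrt m(\log m)^{-5}$ non-leaf neighbours (Lemma~\ref{lem:non-leaf-neighbour}), not through its exploration.

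\textbf{Step~1.} The obstacle you flag is handled in the paper by Corollary~\ref{cor:no-decrease-when-log}. Exposing $v_i$'s half-edges one at a time, each is bad with conditional probability below $0.001$. Hence a step with $d_i(v_i)>\sqrt m(\log m)^{-2}$ falls below $0.9d_i(v_i)$ with probability at most $m^{-m^{1/3}}$. Such steps are given a deterministic value in the truncated martingale of Lemma~\ref{lem:martingale}, and Azuma's inequality is applied only to the remaining steps, whose increments are at most $3\sqrt m(\log m)^{-2}$.
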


We start by proving a key lemma, Lemma \ref{lem:mlogm-same-comp}, which shows that any two vertices $u,v$ of degree at least $(\log{m})^4$ and where $d(u)d(v)\geq m(\log{m})^4$ are in the same component with high probability. Because the switching used in the proof, as depicted in Figure \ref{fig:3.3}, requires $u$ to have many non-leaf neighbours, we first prove Lemma \ref{lem:non-leaf-neighbour}, which states that no single vertex with $\Omega((\log{m})^3)$ degree will have almost all neighbours be leaves.

\begin{lem}\label{lem:non-leaf-neighbour}
    Let $\cD$ be such that $m$ is large enough and $n_1\leq  n(1-\log^{-1}{m})$. Fix $u\in[n]$ with $d(u)\geq100(\log{m})^3$. Then the probability that $u$ has no more than $\frac{d(u)}{2(\log{m})^2}$ neighbours of degree at least $2$ is at most $m^{-49\log\log{m}}$.
\end{lem}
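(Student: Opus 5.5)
Fix $u$ with $d=d(u)\ge 100(\log m)^3$ and set $k=\lfloor d/(2(\log m)^2)\rfloor$. The plan is a switching argument comparing the classes $\cA_j$ of graphs in which $u$ has exactly $j$ neighbours of degree at least $2$. We want $\sum_{j\le k}|\cA_j|$ to be tiny compared to the total. The natural move is to increase $j$ by one: take an edge $ux$ with $d(x)=1$ and an edge $yz$ with $d(y)\ge 2$, $y\ne u$, $y\notin N(u)$, and switch to $uy$ and $xz$. This keeps the degree sequence, makes $y$ a new non-leaf neighbour of $u$, and leaves the leaf $x$ attached to $z$. We need $z\ne u$ and $xz$ not already an edge. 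The latter is automatic, since $x$ now has no other edge, but we must check that $z\neq x$ and that the switch is valid.

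\textbf{Forward count.} Fix $G\in\cA_j$ with $j\le 2k$. Then $u$ has at least $d-j\ge d/2$ leaf neighbours $x$. For the edge, we need to count oriented edges $yz$ with $d(y)\ge2$, $y\notin N(u)\cup\{u\}$ and $z\neq u$. The total degree of non-leaf vertices is $2m-n_1$, and the assumption $n_1\le n(1-\log^{-1}m)$ gives at least $n/\log m$ non-leaf vertices. Hence $2m-n_1\ge n_1+2(n-n_1)\cdot\ldots$, and more simply the total degree of non-leaves is at least $2(n-n_1)\ge 2n/\log m\ge$ something comparable to the edge count contributed by non-leaves.

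Non-leaf neighbours of $u$ number $j$, but their degrees could be large. This is where care is needed. I would restrict the $y$'s to non-leaf vertices outside $N(u)$ and bound the half-edges lost to the $j$ excluded vertices. The forward count is at least $(d/2)\cdot(S-\text{(degree of the } j\text{ excluded vertices)})$, where $S$ is the non-leaf degree sum.

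\textbf{Backward count.} A graph in $\cA_{j+1}$ is reached by choosing the neighbour $y$ (at most $j+1$ choices) and the leaf $x$ now hanging off some vertex $z$ (at most $n_1$ choices). This gives at most $(j+1)n_1$ preimages.

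\textbf{Ratio.} Together these give
$$|\cA_j|\le \frac{(j+1)\,n_1}{(d/2)\,S'}\,|\cA_{j+1}|,$$
where $S'$ is the usable non-leaf degree. We need the ratio to be at most $1/(\log m)$ uniformly for $j\le 2k$. Iterating from $j\le k$ up to $2k$ then gives
$$|\cA_j|\le (\log m)^{-k}\,|\cA_{2k}|,$$
and since $k\ge 50\log m$ this is at most $m^{-50\log\log m}$. Summing over $j\le k$ loses at most a factor $m$, which yields the claimed $m^{-49\log\log m}$.

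\textbf{Main obstacle.} The hard step is lower bounding $S'$ relative to $n_1$, i.e.\ showing $(j+1)n_1\le dS'/(2\log m)$. Since $S'\gtrsim n-n_1-j\ge n/\log m - 2k$, we need roughly $S'\ge n/(2\log m)$. Then the ratio is at most
$$\frac{2(j+1)\cdot 2\log m\cdot n_1}{d\, n}\le \frac{4(2k+1)\log m}{d}\approx \frac{4}{\log m},$$
using $j+1\le 2k+1\approx d/(\log m)^2$. This is good enough after adjusting constants, for instance by iterating from $k$ to $4k$ instead.

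The subtle part is the high-degree non-leaf neighbours of $u$. They could eat all of $S$, but then they are not counted in $S'$ only as vertices, so I would count $y$ by vertices, one edge per vertex, rather than by half-edges. That makes the excluded amount exactly $j$ vertices. We also need $z\ne u$ and $yz$ not adjacent to $u$ in a way that would create a multiedge, which is handled by requiring $y\notin N(u)$. Finally, the case $n<2k$ or very small $n$ is impossible, since $d\le n-1$.
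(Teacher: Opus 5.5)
Your proposal is correct and follows essentially the same route as the paper. You use the same switching $\{ux,yz\}\to\{uy,xz\}$, with $x$ a leaf neighbour of $u$ and $y$ a non-leaf non-neighbour counted by vertices; this gives at least $(d-j)(n-n_1-j-1)$ forward versus at most $(j+1)n_1$ backward switchings, so the ratio is at most about $4/\log m$ for $j$ up to roughly $d/(\log m)^2$, and you then iterate about $50\log m$ times and take a union bound.

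The detour through half-edge degree sums is unnecessary. The factor $4$ also needs no adjustment, since $4^{50\log m}=m^{O(1)}$ is absorbed by the $\log\log m$ in the exponent.
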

\begin{proof}
    For every $0\leq i\leq d(u)$ let $\cF_i$ denote the event that $u$ has $i$ neighbours of degree at least $2$. We consider switchings from $\cF_i$ into $\cF_{i+1}$ which swap $\{ua,bv\}$ with $\{uv,ba\}$, where $d(a)=1$, $v$ is a non-neighbour of $u$ of degree at least $2$, and $b$ is a neighbour of $v$, as in Figure \ref{fig:3.2}. Such a switching is always valid.\par
\begin{figure}
    \centering
    \tikzset{every picture/.style={line width=0.75pt}} %set default line width to 0.75pt        

\begin{tikzpicture}[x=0.75pt,y=0.75pt,yscale=-1,xscale=1]
%uncomment if require: \path (0,214); %set diagram left start at 0, and has height of 214

%Straight Lines [id:da7679044652423737] 
\draw [color={rgb, 255:red, 208; green, 2; blue, 27 }  ,draw opacity=1 ][line width=2.25]    (80,50) -- (80,102) -- (80,150) ;
%Straight Lines [id:da18605012011692024] 
\draw [color={rgb, 255:red, 74; green, 144; blue, 226 }  ,draw opacity=1 ][line width=2.25]  [dash pattern={on 2.53pt off 3.02pt}]  (80,50) -- (240,150) ;
%Straight Lines [id:da4371013298446975] 
\draw [color={rgb, 255:red, 208; green, 2; blue, 27 }  ,draw opacity=1 ][line width=2.25]    (240,50) -- (240,107) -- (240,150) ;
%Curve Lines [id:da8126349070766706] 
\draw [color={rgb, 255:red, 208; green, 2; blue, 27 }  ,draw opacity=1 ][line width=2.25]    (260,67.62) .. controls (281.18,38.67) and (343.48,37.19) .. (367.04,64.98) ;
\draw [shift={(369.45,68.12)}, rotate = 235.39] [color={rgb, 255:red, 208; green, 2; blue, 27 }  ,draw opacity=1 ][line width=2.25]    (17.49,-7.84) .. controls (11.12,-3.68) and (5.29,-1.07) .. (0,0) .. controls (5.29,1.07) and (11.12,3.68) .. (17.49,7.84)   ;
%Curve Lines [id:da8624717965909442] 
\draw [color={rgb, 255:red, 74; green, 144; blue, 226 }  ,draw opacity=1 ][line width=2.25]    (263.35,141.02) .. controls (287.39,167.12) and (348.67,166.44) .. (370,138.12) ;
\draw [shift={(260.55,137.62)}, rotate = 53.7] [color={rgb, 255:red, 74; green, 144; blue, 226 }  ,draw opacity=1 ][line width=2.25]    (17.49,-7.84) .. controls (11.12,-3.68) and (5.29,-1.07) .. (0,0) .. controls (5.29,1.07) and (11.12,3.68) .. (17.49,7.84)   ;
%Shape: Circle [id:dp11884265334764177] 
\draw  [color={rgb, 255:red, 0; green, 0; blue, 0 }  ,draw opacity=1 ][fill={rgb, 255:red, 0; green, 0; blue, 0 }  ,fill opacity=1 ][line width=2.25]  (80,40) .. controls (85.52,40) and (90,44.48) .. (90,50) .. controls (90,55.52) and (85.52,60) .. (80,60) .. controls (74.48,60) and (70,55.52) .. (70,50) .. controls (70,44.48) and (74.48,40) .. (80,40) -- cycle ;
%Shape: Circle [id:dp9339668109838892] 
\draw  [color={rgb, 255:red, 0; green, 0; blue, 0 }  ,draw opacity=1 ][fill={rgb, 255:red, 0; green, 0; blue, 0 }  ,fill opacity=1 ][line width=2.25]  (80,140) .. controls (85.52,140) and (90,144.48) .. (90,150) .. controls (90,155.52) and (85.52,160) .. (80,160) .. controls (74.48,160) and (70,155.52) .. (70,150) .. controls (70,144.48) and (74.48,140) .. (80,140) -- cycle ;
%Shape: Circle [id:dp1497156351101191] 
\draw  [color={rgb, 255:red, 0; green, 0; blue, 0 }  ,draw opacity=1 ][fill={rgb, 255:red, 0; green, 0; blue, 0 }  ,fill opacity=1 ][line width=2.25]  (240,40) .. controls (245.52,40) and (250,44.48) .. (250,50) .. controls (250,55.52) and (245.52,60) .. (240,60) .. controls (234.48,60) and (230,55.52) .. (230,50) .. controls (230,44.48) and (234.48,40) .. (240,40) -- cycle ;
%Shape: Circle [id:dp973341145275968] 
\draw  [color={rgb, 255:red, 0; green, 0; blue, 0 }  ,draw opacity=1 ][fill={rgb, 255:red, 0; green, 0; blue, 0 }  ,fill opacity=1 ][line width=2.25]  (240,140) .. controls (245.52,140) and (250,144.48) .. (250,150) .. controls (250,155.52) and (245.52,160) .. (240,160) .. controls (234.48,160) and (230,155.52) .. (230,150) .. controls (230,144.48) and (234.48,140) .. (240,140) -- cycle ;
%Straight Lines [id:da9130051346517337] 
\draw [color={rgb, 255:red, 0; green, 0; blue, 0 }  ,draw opacity=1 ][line width=2.25]  [dash pattern={on 2.53pt off 3.02pt}]  (80,150) -- (130,160) ;
%Straight Lines [id:da2592766171212829] 
\draw [color={rgb, 255:red, 0; green, 0; blue, 0 }  ,draw opacity=1 ][line width=2.25]    (240,150) -- (200,160) ;
%Straight Lines [id:da9295789519145127] 
\draw [color={rgb, 255:red, 74; green, 144; blue, 226 }  ,draw opacity=1 ][line width=2.25]    (400,49.74) -- (560,149.74) ;
%Shape: Circle [id:dp19678369097951665] 
\draw  [color={rgb, 255:red, 0; green, 0; blue, 0 }  ,draw opacity=1 ][fill={rgb, 255:red, 0; green, 0; blue, 0 }  ,fill opacity=1 ][line width=2.25]  (400,39.74) .. controls (405.52,39.74) and (410,44.21) .. (410,49.74) .. controls (410,55.26) and (405.52,59.74) .. (400,59.74) .. controls (394.48,59.74) and (390,55.26) .. (390,49.74) .. controls (390,44.21) and (394.48,39.74) .. (400,39.74) -- cycle ;
%Shape: Circle [id:dp9631633016084361] 
\draw  [color={rgb, 255:red, 0; green, 0; blue, 0 }  ,draw opacity=1 ][fill={rgb, 255:red, 0; green, 0; blue, 0 }  ,fill opacity=1 ][line width=2.25]  (560,139.74) .. controls (565.52,139.74) and (570,144.21) .. (570,149.74) .. controls (570,155.26) and (565.52,159.74) .. (560,159.74) .. controls (554.48,159.74) and (550,155.26) .. (550,149.74) .. controls (550,144.21) and (554.48,139.74) .. (560,139.74) -- cycle ;
%Straight Lines [id:da29406028394256056] 
\draw [color={rgb, 255:red, 0; green, 0; blue, 0 }  ,draw opacity=1 ][line width=2.25]  [dash pattern={on 2.53pt off 3.02pt}]  (400,149.74) -- (450,159.74) ;
%Straight Lines [id:da3262279046209834] 
\draw [color={rgb, 255:red, 0; green, 0; blue, 0 }  ,draw opacity=1 ][line width=2.25]    (560,149.74) -- (520,159.74) ;
%Straight Lines [id:da2978590365145354] 
\draw [color={rgb, 255:red, 74; green, 144; blue, 226 }  ,draw opacity=1 ][line width=2.25]    (560,49.74) -- (400,149.74) ;
%Shape: Circle [id:dp8515058264886011] 
\draw  [color={rgb, 255:red, 0; green, 0; blue, 0 }  ,draw opacity=1 ][fill={rgb, 255:red, 0; green, 0; blue, 0 }  ,fill opacity=1 ][line width=2.25]  (400,139.74) .. controls (405.52,139.74) and (410,144.21) .. (410,149.74) .. controls (410,155.26) and (405.52,159.74) .. (400,159.74) .. controls (394.48,159.74) and (390,155.26) .. (390,149.74) .. controls (390,144.21) and (394.48,139.74) .. (400,139.74) -- cycle ;
%Shape: Circle [id:dp15276628569732853] 
\draw  [color={rgb, 255:red, 0; green, 0; blue, 0 }  ,draw opacity=1 ][fill={rgb, 255:red, 0; green, 0; blue, 0 }  ,fill opacity=1 ][line width=2.25]  (560,39.74) .. controls (565.52,39.74) and (570,44.21) .. (570,49.74) .. controls (570,55.26) and (565.52,59.74) .. (560,59.74) .. controls (554.48,59.74) and (550,55.26) .. (550,49.74) .. controls (550,44.21) and (554.48,39.74) .. (560,39.74) -- cycle ;

% Text Node
\draw (80,38.26) node [anchor=south] [inner sep=0.75pt]  [font=\LARGE] [align=left] {$\displaystyle u$};
% Text Node
\draw (80.5,161.74) node [anchor=north] [inner sep=0.75pt]  [font=\LARGE] [align=left] {$\displaystyle a$};
% Text Node
\draw (239,161.74) node [anchor=north] [inner sep=0.75pt]  [font=\LARGE] [align=left] {$\displaystyle v$};
% Text Node
\draw (238,37.26) node [anchor=south] [inner sep=0.75pt]  [font=\LARGE] [align=left] {$\displaystyle b$};
% Text Node
\draw (400,38) node [anchor=south] [inner sep=0.75pt]  [font=\LARGE] [align=left] {$\displaystyle u$};
% Text Node
\draw (400.5,161.48) node [anchor=north] [inner sep=0.75pt]  [font=\LARGE] [align=left] {$\displaystyle a$};
% Text Node
\draw (559,161.48) node [anchor=north] [inner sep=0.75pt]  [font=\LARGE] [align=left] {$\displaystyle v$};
% Text Node
\draw (558,37) node [anchor=south] [inner sep=0.75pt]  [font=\LARGE] [align=left] {$\displaystyle b$};

\end{tikzpicture}
    \caption{Switching used in the proof of Lemma \ref{lem:non-leaf-neighbour}: we switch away degree $1$ neighbours of $u$ (vertex $a$) one-by-one for non-neighbours of $u$ of degree $2$ or more (vertex $v$).}
    \label{fig:3.2}
\end{figure}
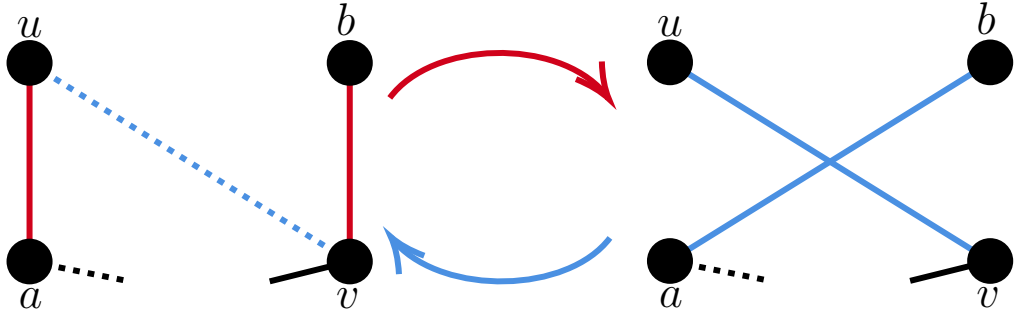
    In $\cF_i$, there are $d(u)-i$ choices of $ua$ and $n-i-n_1$ choices of $v$ (each giving at least one choice of $bv$), so there are at least $(d(u)-i)(n-i-n_1)$ switchings from $\cF_i$ into $\cF_{i+1}$. In $\cF_{i+1}$, there are at most $i+1$ choices of $uv$ and at most $n_1$ choices of $ba$, so at most $n_1(i+1)$ switchings from $\cF_{i+1}$ back into $\cF_i$. Thus for any $0\leq i<\frac{d(u)}{(\log{m})^2}-1$, we have
\begin{equation*}
    \p{\cF_i}\leq\p{\cF_{i+1}}\frac{n_1(i+1)}{(d(u)-i)(n-i-n_1)}\leq\p{\cF_{i+1}}\frac{n\frac{d(u)}{(\log{m})^2}}{\frac{d(u)}{2}\cdot\frac{n}{2\log{m}}}\leq\frac{4}{\log{m}}.
\end{equation*}
So for any fixed $0\leq K\leq\frac{d(u)}{2(\log{m})^2}$, $\p{\cF_K}\leq(\frac{4}{\log{m}})^{50\log{m}}$ and we union bound over the choices of $K$.
\end{proof}

\begin{lem}\label{lem:mlogm-same-comp}
    Let $\cD$ be such that $m$ is large enough and $n_1 \le n(1-\log{m})$. Fix $u,v\in[n]$ with $d(u),d(v)\geq(\log{m})^4$ and such that $d(u)d(v)\geq m(\log{m})^4$. Then the probability that $u$ and $v$ are not in the same component is at most $m^{-9\log\log{m}}$.
\end{lem}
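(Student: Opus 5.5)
The plan is a switching argument in the spirit of Lemma \ref{lem:non-leaf-neighbour}. It moves from graphs in which $u$ and $v$ lie in different components to graphs in which they are joined by many short ``witness'' paths. The switching is then iterated roughly $9\log m$ times, each step gaining a factor of order $(\log m)^{-1}$.

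First, by Lemma \ref{lem:non-leaf-neighbour} (applicable since $d(u),d(v)\ge (\log m)^4\ge 100(\log m)^3$), we may discard, at cost $2m^{-49\log\log m}$, the graphs in which $u$ or $v$ has at most $d(\cdot)/(2(\log m)^2)$ neighbours of degree at least $2$. Call $a\in N(u)$ \emph{good} if $d(a)\ge 2$; we then have at least $d(u)/(2(\log m)^2)$ good neighbours of $u$, and similarly for $v$.

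Next, define families $\cF_k$ ($k\ge 0$): $\cF_0$ is the set of graphs (with the non-leaf property) in which $u,v$ are in different components. For $k\ge1$, $\cF_k$ is the set of graphs in which there are exactly $k$ \emph{witness edges}, i.e.\ edges $ab$ with $a\in N(u)$ and $b\in N(v)$, and removing these $k$ edges separates $u$ from $v$. The basic switching from $\cF_k$ to $\cF_{k+1}$ takes a good neighbour $a$ of $u$ with an edge $aa'$, $a'\ne u$, and a good neighbour $b$ of $v$ with an edge $bb'$, $b'\ne v$. Both edges must lie off the current witness edges and on opposite sides of the cut. It replaces $\{aa',bb'\}$ by $\{ab,a'b'\}$. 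Because $aa'$ and $bb'$ lie on opposite sides of the cut, $ab$ and $a'b'$ are non-edges, so the switching is valid. It creates exactly one new witness edge $ab$, and the enlarged witness set still separates $u$ from $v$; the condition $a'\ne u$, $b'\ne v$ keeps $a'b'$ from being a witness. The non-leaf property of $a$ and $b$ is exactly what guarantees the edges $aa',bb'$ exist. The number of forward switchings is then at least of order
\[
\frac{d(u)}{2(\log m)^2}\cdot\frac{d(v)}{2(\log m)^2}\ \ge\ \frac{m}{4},
\]
after subtracting the $O(k)$ used witnesses, for $k\le 9\log m$. Here I use $d(u)d(v)\ge m(\log m)^4$. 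I also have to remove the good neighbours whose only other edges are witnesses or which are shared; this is where $d(u),d(v)\ge(\log m)^4$ is used.

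For the reverse count, a graph in $\cF_{k+1}$ is sent back to $\cF_k$ by choosing one of the $k+1$ witness edges $ab$ and one further edge $a'b'$ crossing from $u$'s side to $v$'s side. Since after deleting the witnesses the two sides are disconnected, $a'b'$ is one of the remaining cut edges. So the backward count is at most $2(k+1)\cdot c_k$, where $c_k$ is the number of such crossing edges. I would refine $\cF_k$ to require that the witness edges be the \emph{only} crossing edges, so that $c_k=k+1$ and no separate bound on crossing edges is needed. This gives
\[
\p{\cF_k}\le \p{\cF_{k+1}}\cdot\frac{O((k+1)^2)}{m/4}.
\]
Telescoping from $k=0$ to $K=9\log m$ gives $\p{\cF_0}\le \prod_{k<K} O(k^2/m)$, which is far smaller than $m^{-9\log\log m}$. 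Combined with the cost of the non-leaf exclusion, this proves the lemma.

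The main obstacle is making the forward and backward counts match the families exactly. In the forward direction, the switched graph must land in $\cF_{k+1}$ and not in some graph where $a'b'$ accidentally becomes an additional witness or crossing edge. The restriction $a'\ne u$, $b'\ne v$ and the requirement that $aa',bb'$ avoid $N(u)\times N(v)$ should handle this, at the cost of discarding $O(k\,\Delta)$ choices, which I expect to bound using the good-neighbour count. If exact bookkeeping fails, the fallback is cruder: use only $\cF_0$ and $\cF'=$ ``same component'', with forward count $\ge m(\log m)^{2}/4$ and backward count $\le 2m$. This yields only a $(\log m)^{-2}$ gain, and one would then need the iteration above to amplify it to $m^{-9\log\log m}$.
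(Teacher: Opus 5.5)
\paragraph{The gap.} Your switching does not take $\cF_k$ into $\cF_{k+1}$. Replacing $\{aa',bb'\}$ by $\{ab,a'b'\}$, with $aa'$ on $u$'s side and $bb'$ on $v$'s side, creates \emph{two} edges across the cut, because $a'b'$ joins the two sides just as $ab$ does. Typically $a'$ is still connected to $u$ after $aa'$ is deleted, and $b'$ is still connected to $v$ after $bb'$ is deleted. So removing the $k+1$ witness edges no longer separates $u$ from $v$, and your claim that the enlarged witness set still separates them is false. The conditions $a'\neq u$, $b'\neq v$ only stop $a'b'$ from being a witness; they do not address this.

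Your refinement that the witness edges be the only crossing edges makes things worse. Every switched graph contains the non-witness crossing edge $a'b'$, so it never lies in the refined $\cF_{k+1}$. If you repair this by allowing up to $2k$ crossing edges, nothing in your families pins $a'b'$ down to $O(k)$ candidates in the reverse direction. The second reverse edge must then be counted as essentially arbitrary, giving $O(km)$ reverse switchings.

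That is fatal here, because your switching needs non-leaf neighbours at \emph{both} $u$ and $v$. The forward count is then only $\frac{d(u)d(v)}{4(\log m)^4}\geq \frac m4$, so the whole $(\log m)^4$ slack in $d(u)d(v)\ge m(\log m)^4$ is spent, and the ratio $O(km)/(m/4)$ gives no gain. The per-step gain $O(k^2/m)$ you derive, far more than needed, is a symptom of assuming both created edges are controlled. The fallback does not close the gap either. With your switching the forward count is $m/4$, not $m(\log m)^2/4$. Bounding the reverse switchings from an arbitrary connected graph by $2m$ also needs exactly the tracked structure that failed.

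\paragraph{The missing idea.} The paper switches at $v$ itself rather than at a neighbour of $v$, and tracks common neighbours instead of a cut. Its family $\cF_{i+1}$ consists of graphs in which $u$ has many non-leaf neighbours, $u$ and $v$ have $i$ common neighbours, and there is no $u$--$v$ path with at most four edges once the common neighbours and a set $S\subseteq N(u)$ of at most $i$ further vertices are deleted.

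The switching takes $x\in N(u)\setminus(N(v)\cup S)$ of degree at least $2$ with an edge $xa$, $a\neq u$, and an edge $yv$ with $y\in N(v)\setminus N(u)$, and replaces $\{xa,yv\}$ by $\{xv,ya\}$. The no-short-path condition guarantees $ya\notin E(G)$, and adding $a$ to $S$ keeps the graph in the family. This makes $x$ a new common neighbour and uses the non-leaf property only at $u$. There are therefore at least
\[
\frac{d(u)}{3(\log m)^2}\cdot\frac{d(v)}{2}\geq\frac{m(\log m)^2}{6}
\]
forward switchings.

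Since the number of common neighbours is a property of the graph itself, the reverse count is simply at most $2(i+1)m$: one of the $i+1$ edges from $v$ to a common neighbour, together with an arbitrary edge. The uncontrolled second edge is affordable precisely because of the extra $(\log m)^2$ in the forward count. The ratio is then at most $\frac{120}{\log m}$ per step for $i<10\log m$, giving $(120/\log m)^{10\log m}<m^{-9.5\log\log m}$. This is the $(\log m)^{-1}$ per-step gain you announced at the outset but could not obtain with your switching.
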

\begin{proof}
Wlog $u$ has $d(u)\geq\sqrt{m}(\log{m})^2$. Let $\cA$ denote the event that $u$ has at least $\frac{d(u)}{(\log{m})^2}$ neighbours of degree at least $2$. Let $\cB=\cB_0$ denote the event that $u$ and $v$ are not in the same component. For $i\geq0$, let $\cB_{i+1}$ denote the event where $u$ and $v$ have $i$ common neighbours, and in the graph obtained by deleting these common neighbours and at most $i$ other neighbours of $u$, there is no path of at most four edges from $u$ to $v$. Let $\cF_i=\cA\cap\cB_i$.

Fix $i\geq0$. For every $G$ in $\cF_i$ we denote by $S=S(G)$ the set of at most $i$ neighbours of $u$ such that there is no path of length at most $4$ between $u$ and $v$ in $G-S\cup(N(u)\cap N(v))$. We claim we can  switch  from a graph $G$ in ${\mathcal F}_i$ to a graph $G'$ by swapping $\{xa,yv\}$ with  $\{xv,ya\}$ whenever  $x\in N(u)\setminus (N(v) \cup S)$, $a\neq u$ and $y \in N(v)\setminus N(u)$, as in Figure \ref{fig:3.3}. 
Note that since $x\notin S$ and there is no path of length at most $4$ between $u$ and $v$ in $G-S\cup(N(u)\cap N(v))$, $ya \not \in E(G)$ and our claim is true. In $G'$ there are  $i+1$ common neighbours of  $u$ and $v$. Any induced path of length at most $4$ between $u$ and $v$ either goes through $N(u)\cap N(v)$, or through both $N(u)\setminus N(v)$ and $N(v)\setminus N(u)$. Since no new vertices were added to $N(u)\setminus N(v)$ or $N(v)\setminus N(u)$ in $G'$ from $G$, every path of at most 4 edges  from $u$ to $v$ in $G-S\cup(N(u)\cap N(v))$ must use $ya$, and since none of  $yv$, $yu$ or $av$  is an edge of $G'$, the only possible such path is $uaybv$ for some common neighbour $b$ of $y$ and $v$. 
Adding $a$ to $S$ and noting that $N(u)$ does not change, we see that indeed $G' \in  {\mathcal F}_{i+1}$. 

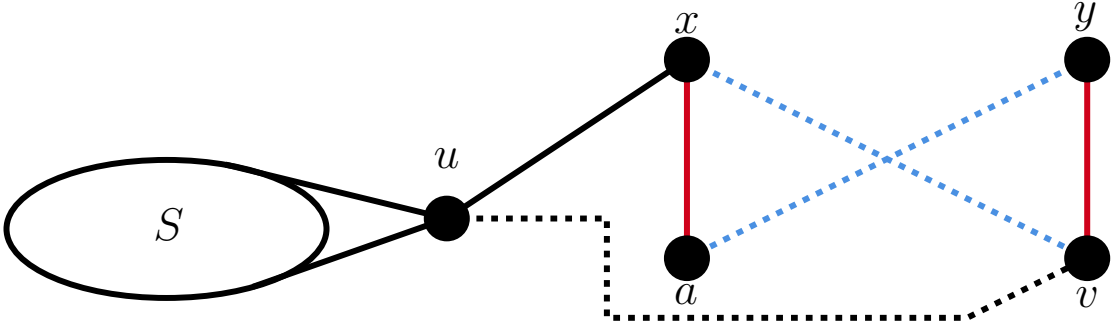
\begin{figure}
    \centering
    \tikzset{every picture/.style={line width=0.75pt}} %set default line width to 0.75pt        

\begin{tikzpicture}[x=0.75pt,y=0.75pt,yscale=-1,xscale=1]
%uncomment if require: \path (0,285); %set diagram left start at 0, and has height of 285

%Straight Lines [id:da7902544107254279] 
\draw [color={rgb, 255:red, 208; green, 2; blue, 27 }  ,draw opacity=1 ][line width=2.25]    (420,40) -- (420,92) -- (420,140) ;
%Straight Lines [id:da5973227050682391] 
\draw [color={rgb, 255:red, 74; green, 144; blue, 226 }  ,draw opacity=1 ][line width=2.25]  [dash pattern={on 2.53pt off 3.02pt}]  (420,40) -- (620,140) ;
%Straight Lines [id:da3736928674677139] 
\draw [color={rgb, 255:red, 208; green, 2; blue, 27 }  ,draw opacity=1 ][line width=2.25]    (620,40) -- (620,97) -- (620,140) ;
%Straight Lines [id:da4843478709402741] 
\draw [color={rgb, 255:red, 74; green, 144; blue, 226 }  ,draw opacity=1 ][line width=2.25]  [dash pattern={on 2.53pt off 3.02pt}]  (620,40) -- (420,140) ;
%Straight Lines [id:da5116064430857867] 
\draw [color={rgb, 255:red, 0; green, 0; blue, 0 }  ,draw opacity=1 ][line width=2.25]    (300,120) -- (420,40) ;
%Shape: Ellipse [id:dp8422052247887316] 
\draw  [line width=2.25]  (80,125.25) .. controls (80,106.06) and (115.79,90.5) .. (159.95,90.5) .. controls (204.11,90.5) and (239.9,106.06) .. (239.9,125.25) .. controls (239.9,144.44) and (204.11,160) .. (159.95,160) .. controls (115.79,160) and (80,144.44) .. (80,125.25) -- cycle ;
%Straight Lines [id:da07993130498116319] 
\draw [color={rgb, 255:red, 0; green, 0; blue, 0 }  ,draw opacity=1 ][line width=2.25]    (189.44,92.82) -- (300,120) ;
%Straight Lines [id:da5136443527495451] 
\draw [color={rgb, 255:red, 0; green, 0; blue, 0 }  ,draw opacity=1 ][line width=2.25]    (203.04,154.42) -- (300,120) ;
%Shape: Circle [id:dp1778737973884188] 
\draw  [color={rgb, 255:red, 0; green, 0; blue, 0 }  ,draw opacity=1 ][fill={rgb, 255:red, 0; green, 0; blue, 0 }  ,fill opacity=1 ][line width=2.25]  (300,110) .. controls (305.52,110) and (310,114.48) .. (310,120) .. controls (310,125.52) and (305.52,130) .. (300,130) .. controls (294.48,130) and (290,125.52) .. (290,120) .. controls (290,114.48) and (294.48,110) .. (300,110) -- cycle ;
%Shape: Circle [id:dp9751039364949254] 
\draw  [color={rgb, 255:red, 0; green, 0; blue, 0 }  ,draw opacity=1 ][fill={rgb, 255:red, 0; green, 0; blue, 0 }  ,fill opacity=1 ][line width=2.25]  (420,30) .. controls (425.52,30) and (430,34.48) .. (430,40) .. controls (430,45.52) and (425.52,50) .. (420,50) .. controls (414.48,50) and (410,45.52) .. (410,40) .. controls (410,34.48) and (414.48,30) .. (420,30) -- cycle ;
%Shape: Circle [id:dp5473192861357994] 
\draw  [color={rgb, 255:red, 0; green, 0; blue, 0 }  ,draw opacity=1 ][fill={rgb, 255:red, 0; green, 0; blue, 0 }  ,fill opacity=1 ][line width=2.25]  (620,30) .. controls (625.52,30) and (630,34.48) .. (630,40) .. controls (630,45.52) and (625.52,50) .. (620,50) .. controls (614.48,50) and (610,45.52) .. (610,40) .. controls (610,34.48) and (614.48,30) .. (620,30) -- cycle ;
%Shape: Circle [id:dp853875323478874] 
\draw  [color={rgb, 255:red, 0; green, 0; blue, 0 }  ,draw opacity=1 ][fill={rgb, 255:red, 0; green, 0; blue, 0 }  ,fill opacity=1 ][line width=2.25]  (420,130) .. controls (425.52,130) and (430,134.48) .. (430,140) .. controls (430,145.52) and (425.52,150) .. (420,150) .. controls (414.48,150) and (410,145.52) .. (410,140) .. controls (410,134.48) and (414.48,130) .. (420,130) -- cycle ;
%Shape: Circle [id:dp8207598738412637] 
\draw  [color={rgb, 255:red, 0; green, 0; blue, 0 }  ,draw opacity=1 ][fill={rgb, 255:red, 0; green, 0; blue, 0 }  ,fill opacity=1 ][line width=2.25]  (620,130) .. controls (625.52,130) and (630,134.48) .. (630,140) .. controls (630,145.52) and (625.52,150) .. (620,150) .. controls (614.48,150) and (610,145.52) .. (610,140) .. controls (610,134.48) and (614.48,130) .. (620,130) -- cycle ;
%Straight Lines [id:da27329750064369285] 
\draw [color={rgb, 255:red, 0; green, 0; blue, 0 }  ,draw opacity=1 ][line width=2.25]  [dash pattern={on 2.53pt off 3.02pt}]  (300,120) -- (380,120) -- (380,170) -- (560,170) -- (620,140) ;

% Text Node
\draw (420,28) node [anchor=south] [inner sep=0.75pt]  [font=\LARGE] [align=left] {$\displaystyle x$};
% Text Node
\draw (419.5,151.74) node [anchor=north] [inner sep=0.75pt]  [font=\LARGE] [align=left] {$\displaystyle a$};
% Text Node
\draw (620,153) node [anchor=north] [inner sep=0.75pt]  [font=\LARGE] [align=left] {$\displaystyle v$};
% Text Node
\draw (619,27.26) node [anchor=south] [inner sep=0.75pt]  [font=\LARGE] [align=left] {$\displaystyle y$};
% Text Node
\draw (300,89.5) node  [font=\LARGE] [align=left] {$\displaystyle u$};
% Text Node
\draw (161,123) node  [font=\LARGE] [align=left] {$\displaystyle S$};

\end{tikzpicture}
    \caption{The switching used in the proof of Lemma \ref{lem:mlogm-same-comp}. The vertex $a$ is added into $S$ after the switch. The dashed black lines indicate that no paths of length at most $4$ exist between $u$ and $v$ outside of $S$, which implies the absence of the edges $xv$ and $ay$.}
    \label{fig:3.3}
\end{figure}

For  any $G$ in ${\cal F}_i$ for $i\leq10\log{m}$ there are at least $\frac{d(u)}{2(\log{m})^2}-2i>\frac{d(u)}{3(\log{m})^2}$ choices of $xa$ and at least $d(v)-2i>\frac{d(v)}{2}$ choices of $yv$. On the other hand, for a $G'$ in ${\mathcal F}_{i+1}$, there are at most $2(i+1)m$ choices of $\{xv,ya\}$. Thus for $0\leq i<10\log{m}$,
\begin{equation*}
    \p{\cF_i}\leq\p{\cF_{i+1}}\frac{20m\log{m}}{\frac{d(u)d(v)}{6(\log{m})^2}}\leq\p{\cF_{i+1}}\frac{120}{\log{m}}
\end{equation*}
so $\p{\cF_0}\leq(\frac{120}{\log{m}})^{10\log{m}}<m^{-9.5\log\log{m}}$. By Lemma \ref{lem:non-leaf-neighbour}, it follows that $\p{\cA^\complement}<m^{-49\log\log{m}}$ so $\p{\cB_0}<m^{-9.5\log\log{m}}+m^{-49\log\log{m}}$.
\end{proof}
In particular, when $d(n)$ is high enough, all vertices of degree at least $\sqrt{m}(\log{m})^{-2}$ are in the same component directly by Lemma \ref{lem:mlogm-same-comp}.
\begin{cor}
\label{sec6cor1}
     Let $\cD$ be such that $m$ is large enough, $n_1 \le n(1-\log{m})$, and   $d(n) \ge \sqrt{m}(\log{m})^6$. Then with probability  $1-o(m^{-8\log\log{m}})$, all the vertices  of degree at least $\sqrt{m}(\log{m})^{-2}$ are in the same component.
\end{cor}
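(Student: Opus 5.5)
Let $w$ be a vertex of maximum degree, so $d(w)=d(n)\ge \sqrt{m}(\log m)^6$. The plan is to show that every vertex $x$ with $d(x)\ge \sqrt{m}(\log m)^{-2}$ lies in the component of $w$ with very high probability. A union bound over all such $x$ then finishes the proof, since if each of them is joined to $w$ they all lie in one component.

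First, fix such an $x\ne w$ and check the hypotheses of Lemma \ref{lem:mlogm-same-comp} for the pair $(w,x)$. Both degrees are at least $(\log m)^4$ once $m$ is large, because $\sqrt{m}(\log m)^{-2}\ge(\log m)^4$. The product satisfies
\[
d(w)d(x)\ge \sqrt{m}(\log m)^6\cdot\sqrt{m}(\log m)^{-2}=m(\log m)^4 .
\]
The hypothesis on $n_1$ is assumed in the corollary exactly as in the lemma. So Lemma \ref{lem:mlogm-same-comp} gives that $w$ and $x$ lie in different components with probability at most $m^{-9\log\log m}$.

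Second, count the vertices $x$ we must handle. Each has degree at least $\sqrt{m}(\log m)^{-2}$ and the degrees sum to $2m$, so there are at most $2\sqrt{m}(\log m)^2$ of them. A union bound gives failure probability at most
\[
2\sqrt{m}(\log m)^2\, m^{-9\log\log m}=o\bigl(m^{-8\log\log m}\bigr),
\]
because $m^{-\log\log m}$ decays faster than any polynomial in $m$.

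I do not expect a real obstacle. The one point to verify is that the degree threshold is strong enough to feed Lemma \ref{lem:mlogm-same-comp}: the exponents $6$ and $-2$ combine to the required $(\log m)^4$, and the lemma's slack of $\log\log m$ in the exponent absorbs the polynomial union bound.
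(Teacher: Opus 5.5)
Your proof is correct and matches the paper's: apply Lemma~\ref{lem:mlogm-same-comp} to the maximum-degree vertex paired with each vertex of degree at least $\sqrt{m}(\log m)^{-2}$, then take a union bound. The only difference is the count: the paper bounds the number of such vertices crudely by $m$, while you use the sharper $2\sqrt{m}(\log m)^2$, and either suffices.
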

\begin{proof}
     Applying  Lemma \ref{lem:mlogm-same-comp} to   $d(n)$ and any vertex $v$ with $d(v)\geq\sqrt{m}(\log{m})^{-2}$, the probability $v$ is not in the same component as $d(n)$ is at most $m^{-9\log\log{m}}$. Summing up over all the at most $m$  choices for $v$ yields the desired result. 
\end{proof}
The next lemma states that when $D^*\geq\eps m$ but $d(n)\leq\sqrt{m}(\log{m})^6$, the vertices of degree at least $\sqrt{m}(\log{m})^{-2}$ are also in the same component. Lemma \ref{bigguyssticktogether} and Corollary \ref{sec6cor1} together show that when $D^*\geq\eps m$ or $d(n)\geq\sqrt{m}(\log{m})^6$, every exploration which reaches a vertex of degree at least $\sqrt{m}(\log{m})^{-2}$ is exploring the same component.
\begin{figure}
    \centering
    \tikzset{every picture/.style={line width=0.75pt}} %set default line width to 0.75pt        

\begin{tikzpicture}[x=0.75pt,y=0.75pt,yscale=-1,xscale=1]
%uncomment if require: \path (0,225); %set diagram left start at 0, and has height of 225

%Shape: Ellipse [id:dp46642751053148046] 
\draw  [line width=2.25]  (198,115) .. controls (198,62.53) and (220.39,20) .. (248,20) .. controls (275.61,20) and (298,62.53) .. (298,115) .. controls (298,167.47) and (275.61,210) .. (248,210) .. controls (220.39,210) and (198,167.47) .. (198,115) -- cycle ;
%Straight Lines [id:da623556535074324] 
\draw [color={rgb, 255:red, 208; green, 2; blue, 27 }  ,draw opacity=1 ][line width=2.25]    (269,60) -- (459,60) ;
%Straight Lines [id:da3400640579920118] 
\draw [color={rgb, 255:red, 208; green, 2; blue, 27 }  ,draw opacity=1 ][line width=2.25]    (269,170) -- (459,170) ;
%Straight Lines [id:da3114727471982405] 
\draw [color={rgb, 255:red, 74; green, 144; blue, 226 }  ,draw opacity=1 ][line width=2.25]  [dash pattern={on 2.53pt off 3.02pt}]  (459,170) -- (459,60) ;
%Straight Lines [id:da9248670273760919] 
\draw [color={rgb, 255:red, 74; green, 144; blue, 226 }  ,draw opacity=1 ][line width=2.25]  [dash pattern={on 2.53pt off 3.02pt}]  (269,170) -- (269,60) ;
%Straight Lines [id:da30139049363577164] 
\draw [line width=2.25]  [dash pattern={on 2.53pt off 3.02pt}]  (269,60) -- (328,90) -- (328,140) -- (269,170) ;
%Shape: Circle [id:dp9699031125370364] 
\draw  [color={rgb, 255:red, 0; green, 0; blue, 0 }  ,draw opacity=1 ][fill={rgb, 255:red, 0; green, 0; blue, 0 }  ,fill opacity=1 ][line width=2.25]  (269,50) .. controls (274.52,50) and (279,54.48) .. (279,60) .. controls (279,65.52) and (274.52,70) .. (269,70) .. controls (263.48,70) and (259,65.52) .. (259,60) .. controls (259,54.48) and (263.48,50) .. (269,50) -- cycle ;
%Shape: Circle [id:dp3719090852955125] 
\draw  [color={rgb, 255:red, 0; green, 0; blue, 0 }  ,draw opacity=1 ][fill={rgb, 255:red, 0; green, 0; blue, 0 }  ,fill opacity=1 ][line width=2.25]  (269,160) .. controls (274.52,160) and (279,164.48) .. (279,170) .. controls (279,175.52) and (274.52,180) .. (269,180) .. controls (263.48,180) and (259,175.52) .. (259,170) .. controls (259,164.48) and (263.48,160) .. (269,160) -- cycle ;
%Shape: Circle [id:dp00749877793575926] 
\draw  [color={rgb, 255:red, 0; green, 0; blue, 0 }  ,draw opacity=1 ][fill={rgb, 255:red, 0; green, 0; blue, 0 }  ,fill opacity=1 ][line width=2.25]  (459,50) .. controls (464.52,50) and (469,54.48) .. (469,60) .. controls (469,65.52) and (464.52,70) .. (459,70) .. controls (453.48,70) and (449,65.52) .. (449,60) .. controls (449,54.48) and (453.48,50) .. (459,50) -- cycle ;
%Shape: Circle [id:dp5939183261785399] 
\draw  [color={rgb, 255:red, 0; green, 0; blue, 0 }  ,draw opacity=1 ][fill={rgb, 255:red, 0; green, 0; blue, 0 }  ,fill opacity=1 ][line width=2.25]  (459,160) .. controls (464.52,160) and (469,164.48) .. (469,170) .. controls (469,175.52) and (464.52,180) .. (459,180) .. controls (453.48,180) and (449,175.52) .. (449,170) .. controls (449,164.48) and (453.48,160) .. (459,160) -- cycle ;

% Text Node
\draw (258,59.5) node [anchor=east] [inner sep=0.75pt]  [font=\LARGE] [align=left] {$\displaystyle n$};
% Text Node
\draw (257,170) node [anchor=east] [inner sep=0.75pt]  [font=\LARGE] [align=left] {$\displaystyle u$};
% Text Node
\draw (197,112.5) node [anchor=east] [inner sep=0.75pt]  [font=\LARGE] [align=left] {$\displaystyle H$};
% Text Node
\draw (471,60) node [anchor=west] [inner sep=0.75pt]  [font=\LARGE] [align=left] {$\displaystyle v$};
% Text Node
\draw (471,170) node [anchor=west] [inner sep=0.75pt]  [font=\LARGE] [align=left] {$\displaystyle x$};

\end{tikzpicture}
    \caption{First switching in the proof of Lemma \ref{bigguyssticktogether}. The black lines indicate no paths of length at most $3$ exist between $n,u$, implying the absence of the blue edges.}
    \label{fig:3.5-1}
\end{figure}
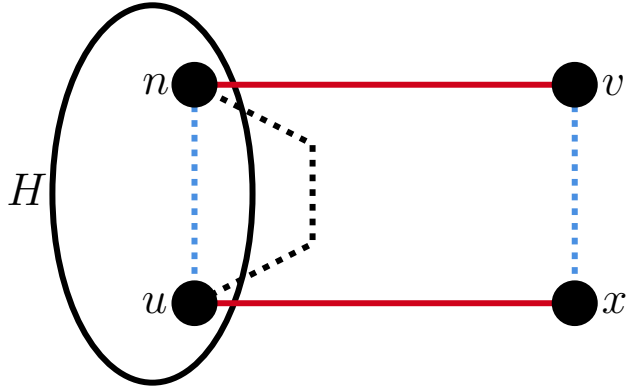
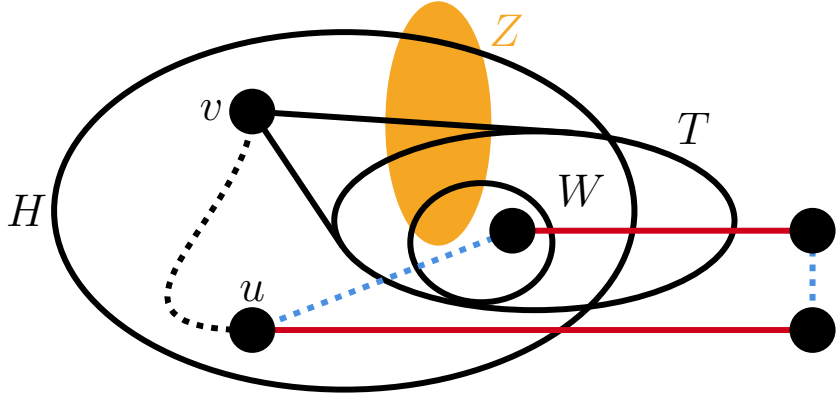
\begin{figure}
    \centering
    \tikzset{every picture/.style={line width=0.75pt}} %set default line width to 0.75pt        

\begin{tikzpicture}[x=0.75pt,y=0.75pt,yscale=-1,xscale=1]
%uncomment if require: \path (0,246); %set diagram left start at 0, and has height of 246

%Shape: Ellipse [id:dp48065709568994597] 
\draw  [color={rgb, 255:red, 245; green, 166; blue, 35 }  ,draw opacity=1 ][fill={rgb, 255:red, 245; green, 166; blue, 35 }  ,fill opacity=1 ][line width=2.25]  (308,66) .. controls (308,32.86) and (319.19,6) .. (333,6) .. controls (346.81,6) and (358,32.86) .. (358,66) .. controls (358,99.14) and (346.81,126) .. (333,126) .. controls (319.19,126) and (308,99.14) .. (308,66) -- cycle ;
%Shape: Circle [id:dp9057546915818485] 
\draw  [color={rgb, 255:red, 0; green, 0; blue, 0 }  ,draw opacity=1 ][fill={rgb, 255:red, 0; green, 0; blue, 0 }  ,fill opacity=1 ][line width=2.25]  (240,50) .. controls (245.52,50) and (250,54.48) .. (250,60) .. controls (250,65.52) and (245.52,70) .. (240,70) .. controls (234.48,70) and (230,65.52) .. (230,60) .. controls (230,54.48) and (234.48,50) .. (240,50) -- cycle ;
%Shape: Ellipse [id:dp8809288024275339] 
\draw  [line width=2.25]  (141,110) .. controls (141,60.29) and (205.92,20) .. (286,20) .. controls (366.08,20) and (431,60.29) .. (431,110) .. controls (431,159.71) and (366.08,200) .. (286,200) .. controls (205.92,200) and (141,159.71) .. (141,110) -- cycle ;
%Straight Lines [id:da2261636438074991] 
\draw [color={rgb, 255:red, 208; green, 2; blue, 27 }  ,draw opacity=1 ][line width=2.25]    (240,170) -- (520,170) ;
%Straight Lines [id:da17888062605491684] 
\draw [color={rgb, 255:red, 74; green, 144; blue, 226 }  ,draw opacity=1 ][line width=2.25]  [dash pattern={on 2.53pt off 3.02pt}]  (520,170) -- (520,120) ;
%Straight Lines [id:da9715646035518999] 
\draw [line width=2.25]    (240,60) -- (285.2,128.13) ;
%Straight Lines [id:da6136888223734489] 
\draw [line width=2.25]    (240,60) -- (400.4,70.72) ;
%Shape: Ellipse [id:dp4181280577570591] 
\draw  [line width=2.25]  (281,115) .. controls (281,90.15) and (325.77,70) .. (381,70) .. controls (436.23,70) and (481,90.15) .. (481,115) .. controls (481,139.85) and (436.23,160) .. (381,160) .. controls (325.77,160) and (281,139.85) .. (281,115) -- cycle ;
%Shape: Ellipse [id:dp2549783642576722] 
\draw  [line width=2.25]  (319,126) .. controls (319,109.43) and (334.89,96) .. (354.5,96) .. controls (374.11,96) and (390,109.43) .. (390,126) .. controls (390,142.57) and (374.11,156) .. (354.5,156) .. controls (334.89,156) and (319,142.57) .. (319,126) -- cycle ;
%Straight Lines [id:da6527446170351245] 
\draw [color={rgb, 255:red, 208; green, 2; blue, 27 }  ,draw opacity=1 ][line width=2.25]    (370,120) -- (520,120) ;
%Shape: Circle [id:dp018824971347771835] 
\draw  [color={rgb, 255:red, 0; green, 0; blue, 0 }  ,draw opacity=1 ][fill={rgb, 255:red, 0; green, 0; blue, 0 }  ,fill opacity=1 ][line width=2.25]  (520,110) .. controls (525.52,110) and (530,114.48) .. (530,120) .. controls (530,125.52) and (525.52,130) .. (520,130) .. controls (514.48,130) and (510,125.52) .. (510,120) .. controls (510,114.48) and (514.48,110) .. (520,110) -- cycle ;
%Straight Lines [id:da43891207276686217] 
\draw [color={rgb, 255:red, 74; green, 144; blue, 226 }  ,draw opacity=1 ][line width=2.25]  [dash pattern={on 2.53pt off 3.02pt}]  (240,170) -- (370,120) ;
%Shape: Circle [id:dp34077723116913516] 
\draw  [color={rgb, 255:red, 0; green, 0; blue, 0 }  ,draw opacity=1 ][fill={rgb, 255:red, 0; green, 0; blue, 0 }  ,fill opacity=1 ][line width=2.25]  (240,160) .. controls (245.52,160) and (250,164.48) .. (250,170) .. controls (250,175.52) and (245.52,180) .. (240,180) .. controls (234.48,180) and (230,175.52) .. (230,170) .. controls (230,164.48) and (234.48,160) .. (240,160) -- cycle ;
%Shape: Circle [id:dp21365404494414986] 
\draw  [color={rgb, 255:red, 0; green, 0; blue, 0 }  ,draw opacity=1 ][fill={rgb, 255:red, 0; green, 0; blue, 0 }  ,fill opacity=1 ][line width=2.25]  (370,110) .. controls (375.52,110) and (380,114.48) .. (380,120) .. controls (380,125.52) and (375.52,130) .. (370,130) .. controls (364.48,130) and (360,125.52) .. (360,120) .. controls (360,114.48) and (364.48,110) .. (370,110) -- cycle ;
%Shape: Circle [id:dp9812384210225791] 
\draw  [color={rgb, 255:red, 0; green, 0; blue, 0 }  ,draw opacity=1 ][fill={rgb, 255:red, 0; green, 0; blue, 0 }  ,fill opacity=1 ][line width=2.25]  (520,160) .. controls (525.52,160) and (530,164.48) .. (530,170) .. controls (530,175.52) and (525.52,180) .. (520,180) .. controls (514.48,180) and (510,175.52) .. (510,170) .. controls (510,164.48) and (514.48,160) .. (520,160) -- cycle ;
%Curve Lines [id:da39180397874705286] 
\draw [line width=2.25]  [dash pattern={on 2.53pt off 3.02pt}]  (240,60) .. controls (240.48,110.8) and (144.48,170.4) .. (240,170) ;

% Text Node
\draw (227,59.5) node [anchor=east] [inner sep=0.75pt]  [font=\LARGE] [align=left] {$\displaystyle v$};
% Text Node
\draw (240,157) node [anchor=south] [inner sep=0.75pt]  [font=\LARGE] [align=left] {$\displaystyle u$};
% Text Node
\draw (139,110) node [anchor=east] [inner sep=0.75pt]  [font=\LARGE] [align=left] {$\displaystyle H$};
% Text Node
\draw (356.96,28.26) node [anchor=south west] [inner sep=0.75pt]  [font=\LARGE,color={rgb, 255:red, 245; green, 166; blue, 35 }  ,opacity=1 ] [align=left] {$\displaystyle Z$};
% Text Node
\draw (222,237) node  [font=\large,color={rgb, 255:red, 245; green, 166; blue, 35 }  ,opacity=1 ] [align=left] {$ $};
% Text Node
\draw (451,78) node [anchor=south west] [inner sep=0.75pt]  [font=\LARGE] [align=left] {$\displaystyle T$};
% Text Node
\draw (404.5,99.5) node  [font=\LARGE] [align=left] {$\displaystyle W$};

\end{tikzpicture}
    \caption{Second switching in the proof of Lemma \ref{bigguyssticktogether}. Every switching makes a member of $W$ into a neighbour of $u$. The dotted black line indicates there are no paths between $u,v$ outside of $Z$, which implies the absence of the blue edges.}
    \label{fig:3.5-2}
\end{figure}
\begin{lem}
\label{bigguyssticktogether}

     For all sufficiently small $\epsilon>0$, the following holds. Let $\cD$ be such that $m$ is large enough, $d(n) \le \sqrt{m}(\log{m})^6$, and $D^*\ge\eps m$. Then with probability $1-o(m^{-8\log\log{m}})$, all the vertices  of degree at least $\sqrt{m}(\log{m})^{-2}$ are in the same component.
\end{lem}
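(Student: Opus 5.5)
The plan is to reduce to Lemma \ref{lem:mlogm-same-comp} by showing that vertex $n$ is, with overwhelming probability, in the same component as a set of vertices whose total degree is $\Omega(m)$. Then every vertex of degree at least $\sqrt{m}(\log m)^{-2}$ is joined to that set by the switchings suggested in Figures \ref{fig:3.5-1} and \ref{fig:3.5-2}.

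First, since $D^*\ge\eps m$ and $d(n)\le \sqrt m(\log m)^6$, the $d(n)$ largest-degree vertices have total degree at least $\eps m$. Hence $d(n)\ge \sqrt{\eps m}$, and the set $W_0$ of vertices of degree at least $\eps\sqrt m(\log m)^{-6}/2$ has total degree at least $\eps m/2$. Vertices of $W_0$ satisfy the conditions of Lemma \ref{lem:mlogm-same-comp} with respect to $n$ only when $d(w)\ge \sqrt m(\log m)^{4}/\sqrt{\eps}$ or so, so the lemma does not apply directly. Instead I will argue in two switching stages, as the figures indicate.

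\textbf{Stage one} (Figure \ref{fig:3.5-1}). Fix $u$ with $d(u)\ge\sqrt m(\log m)^{-2}$. Let $\cF_i$ be the event that $u$ and $n$ have $i$ common neighbours and that, after deleting these and at most $i$ further vertices, there is no path of length at most $3$ between them. Exactly as in the proof of Lemma \ref{lem:mlogm-same-comp}, switch an edge $nv$ and an edge $ux$ into $nx$ and $uv$, creating a new common neighbour. The forward count is about $d(n)d(u)/(\log m)^2$, using Lemma \ref{lem:non-leaf-neighbour} to discard leaf neighbours. The backward count is $O(i\, m)$. This gives a ratio of order $(\log m)^{3}/ \sqrt{\eps}\cdot m/(d(n)d(u))$, which is too weak, since $d(n)d(u)$ may be only about $m(\log m)^{-2}$.

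\textbf{Stage two} (Figure \ref{fig:3.5-2}) repairs this, and it is the main obstacle. Let $H$ be the component of $n$, and let $Z\subseteq H$ be the set of vertices within distance $2$ of $n$ that have large degree. I first show that the component of $n$ within its ball of radius $2$ collects total degree $\Omega(m)$, that is, a set $T$ of neighbours of $N(n)$ with $\sum_{w\in T}d(w)\ge \eps m/4$. For this, switch edges $wy$ with $w\in W_0\setminus H$ against edges $zy'$ with $z\in N(n)$, so that $W_0$ vertices become second neighbours of $n$; the count comparison uses $|N(n)|\cdot\sum_{W_0}d=\Omega(\sqrt{\eps m}\cdot\eps m)$ forward against $O(i\,m)$ backward.

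Once $\sum_{w\in T}d(w)=\Omega(m)$, I run the iterated switching of Figure \ref{fig:3.5-2}. Assume $u\notin H$. Switch an edge $ua$ of $u$ with an edge $wb$, where $w$ is a member of $W$, a set of edges of $T$ leaving $H$ or pointing to low-degree vertices, making $w$ a neighbour of $u$. Track $i$, the number of such neighbours, in an event $\cF_i$ requiring no short $u$--$n$ path avoiding $Z$. Forward there are about $d(u)\cdot\Omega(m)$ switchings; backward at most $O(i\cdot m)$ after accounting for $d(u)$; more precisely, backward is $i\cdot\Delta$-type bounded by $i\, m$. The resulting ratio is $O(i/d(u))$, hence $O(\log m/(\sqrt m(\log m)^{-2}))$. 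Iterating $10\log m$ times gives $m^{-\Omega(\log m)}$, comfortably below $m^{-8\log\log m}$.

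Finally, take a union bound over the at most $m$ choices of $u$ and over the failure probability of Lemma \ref{lem:non-leaf-neighbour}. The delicate part is making the events in Stage two closed under the switching, so that no path of length at most $4$ is created other than through the tracked set. This is handled, as in Lemma \ref{lem:mlogm-same-comp}, by adding the new endpoint to the exceptional set $S$ at each step.
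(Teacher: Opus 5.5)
Your two-stage architecture is the paper's: first gather high-degree vertices near a hub, then attach each $u$ with $d(u)\ge\sqrt m(\log m)^{-2}$ to the gathered set by the switching of Figure \ref{fig:3.5-2}, enlarging a separating set $Z$ by two vertices per step. Your attachment stage works once the gathered set exists.

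The gap is the gathering step, whose target you set at $\sum_{w\in T}d(w)\ge\epsilon m/4$. With $i$ vertices of $W_0$ gathered, your forward count is $|N(n)|$ times the degree of the ungathered part of $W_0$. That degree is only guaranteed to be of order $\epsilon m$, so the forward count can be as small as order $\epsilon\, d(n)\,m$. The backward count is about $2im$. Your counts therefore give a ratio of order $i/(\epsilon d(n))$, which is small only while $i$ is a small multiple of $\epsilon d(n)$.

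Reaching degree $\epsilon m/4$ can take more steps than that. For example, let every vertex of $W_0$ have degree $d(n)=\sqrt{\epsilon m}$ and every other vertex degree $3$. This is allowed, since $D^*=\epsilon m$, and then $\sum_{W_0}d=\epsilon m$. You need $i\approx d(n)/4$ steps, and there your ratio is at least about $1/(2\epsilon)>1$. So the chain of ratios breaks before the target, and the $\Omega(m)$ claim is not established.

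That step has two further problems.
\begin{itemize}
\item A leaf neighbour $z$ of $n$ offers no edge $zy'$ with $y'\neq n$. The lemma has no hypothesis on $n_1$, so Lemma \ref{lem:non-leaf-neighbour} is unavailable.
\item Deleting $zy'$ can destroy a previously gathered second neighbour, so it is not clear that the events map into one another.
\end{itemize}

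The missing observation is that $\Omega(m)$ is far more than you need. In the attachment stage the ratio is $O(im/(d(u)\Sigma))$, where $\Sigma$ is the gathered degree. So $\Sigma\ge m^{1/2+c}$ for a fixed $c>0$ already gives ratio $m^{-c+o(1)}$ for all $i\le(\log m)^2$.

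The paper exploits exactly this. Let $H$ be the set of vertices of degree at least $\sqrt m(\log m)^{-7}$. The paper uses the switching of Figure \ref{fig:3.5-1}: replace $nv,ux$ by $nu,vx$, with $v\in N(n)\setminus H$, $u\in H$ at distance at least $4$ from $n$, and $x\in N(u)\setminus H$. This switching is always valid, makes $n$ itself adjacent to $u$, and uses $n$'s own edge, so leaves cause no trouble. The paper runs it only until $m^{1/8}$ vertices of $H$ lie within distance $3$ of a hub, so the ratio stays at $m^{-1/7}$.

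The attachment switching then has at least $m^{1/9}\cdot m^{13/27}\cdot m^{13/27}=m^{29/27}$ forward choices against $(i+1)m$ backward ones. It fixes a height-$3$ tree $T$ through the gathered set and takes switch edges outside $Z\cup T$, so the gathered structure survives every switch. If you replace your $\Omega(m)$ target by such a polynomial one and switch on $n$'s own edges, your outline goes through.
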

\begin{proof}
    We let $H$ be the set of vertices of degree at least $\frac{\sqrt{m}}{log^7 m}$. Our hypotheses imply that the sum of the degrees of the vertices 
    in $H$ is at least $\frac{\epsilon m}{2}$ and that $|H| \ge \frac{\sqrt{m}}{log^{20} m}$.

    We claim that the probability that there is no vertex of $H$ within distance three of at least $m^{1/8}$ vertices of $H$  is at most $m^{-10 \log \log m}$. 

    To prove our claim, for $i \le 2m^{1/8}$,  we let ${\mathcal F}_i$ be the set of graphs for which $n$ is adjacent to  $i$ vertices of  $H$, no vertex except $n$
    is incident to more than $m^{1/8}$ vertices of $H-n$  and there are at most $2(i+1) m^{1/8}$ vertices of $H$ at distance within 3 of any vertex of $H$ in  $G-n$  and at most  $2(i+1)^2 m^{1/8}$ vertices  of $H$ at distance within 3 of $n$. We  show $\p{\cup_{i \le m^{1/8}} {\mathcal F_i}} \le m^{-12 \log \log m}$.

    To do so, for $i \le 2m^{1/8}$, we consider a swap from a  graph in ${\mathcal F}_i$  using  edges $nv, ux$ 
    where $v$ is a neighbour of $n$ not in $H$, $u$ is a vertex of $H$ at distance at least four from $n$ 
    and $x$ is a neighbour of $u$ not in $H$ as depicted in Figure \ref{fig:3.5-1}. We obtain a graph $G'$ such that $n$ is adjacent to $i+1$ vertices of $H$
    and no vertex except  $n$ is adjacent to more than $m^{1/8}$ vertices of $H-n$. For any $w$ in $H-n$, any vertices of $H$ which are now at distance 3 from $w$ in $G-n$ but were not previously must be joined to $w$ by a path with three edges 
    whose internal vertices are $v$ and $x$. Hence they must be one of the at most $2m^{1/8} $ vertices  in $H-n$ which are neighbours of 
    $v$ or $x$.  Any vertices of $H$ which are now at distance 3 from $n$ in $G$ but were not previously must either 
    have been at distance at most 3 from $u$ in $G-n$ or  be joined to $w$ by a path with three edges 
    whose internal vertices are $v$ and $x$. But the latter is impossible as neither $x$ nor $v$  is  a neighbour of $n$  in $G'$. 
    So $G' \in {\mathcal F}_{i+1}$. 
    
    Now, there are at least $\frac{\sqrt{m}}{\log^8 m}-i>m^{3/7}$ choices for $v$. There are at least $|H|-2(i+1)^2m^{1/8}>m^{3/7}$
    choices for $u$. For each such choice there are at least $\frac{\sqrt{m}}{\log^8 m}-m^{1/8}-1>m^{3/7}$ choices for $x$.
    Hence there are at least $m^{9/7}$ choices for our swap. on the other hand, for any $G'$ in ${\mathcal F}_{i+1}$  there are at most $2(i+1)m<m^{8/7}$ swaps from $G'$ to graphs in ${\mathcal F}_i$. 

    So for each $i\leq 2m^{1/8}$, we have $\frac{|{\mathcal F}_i|}{|{\mathcal F}_{i+1}|} \le m^{-1/7}$ and our claim is proved. 

    We next claim that for any $v$ and $u \in H$, the probability that $u$ and $v$ are in different components and there are at least $m^{1/8}
    $ vertices of $H$ within distance $3$ of $v$ is at most $m^{-12 \log \log m}$. The lemma follows from this, so it remains to prove the claim.
    
    To this end we let ${\mathcal F}_i$ be the event that there are 
    there are at least $m^{1/8}$ vertices of $H$ within distance $3$ of $v$, there are  $i$  edges between these vertices and $u$ 
    and there is a set  $Z$ of at most $2i$ vertices such that $u$ and $v$ are in different components of $G-Z$.

    For $i \le\log^2 m$, and  any graph in ${\mathcal F}_i$ we let $T$ be a tree  
    of height three obtained starting from the one-vertex tree containing only $v$ by repeatedly adding a vertex of $H$ at distance  at most three from $v$ and a subset of a shortest path from this vertex to $v$ until the tree contains a set $W$ of at least $m^{1/8}$ vertices of $H$ at distance at most 3 from $v$, $i$ of which are adjacent to $u$ (these latter are added first).  We add at most three vertices to $T$ in each iteration so  $|T| \le 3m^{1/8}+1$. We can obtain a graph in ${\mathcal F}_{i+1}$ by swapping any pair of edges consisting of  (i) an edge from one of the at least $m^{1/9}$ vertices of $W$ nonadjacent to $u$ to one of its at least $\frac{\sqrt{m}}{\log^7 m}-3m^{1/8}-1-2i>m^{13/27}$  neighbours outside $Z \cup T$ and (ii)  any of the at least $\frac{\sqrt{m}}{\log^7 m}-3m^{1/8}-1-2i>m^{13/27}$  edges from $u$ to  a neighbour outside $Z \cup T$, as depicted in Figure \ref{fig:3.5-2}.
    So, there are at least $m^{29/27}$ such swaps from $G$. On the other hand there are at most $(i+1)m$ swaps to ${\mathcal F}_i$
    from  a $G'$ in ${\mathcal F}_{i+1}$. Our claim then follows in the standard way. 
\end{proof}
We now have a strategy to prove Lemma \ref{theyjoinmain}, by exploring out of every $v\in[n]$ as described in Section \ref{sec:outline}. We have two cases based on $D^*$ and $d(n)$:
\begin{itemize}
    \item If  $D^*\geq \eps m$ or   $d(n) \ge \sqrt{m}(\log{m})^6$, explore until $X_i\geq\eps m$ or $X_i=0$, or until a vertex of degree at least $\sqrt{m}(\log{m})^{-2}$ is reached. This means $V(H_i)$ has maximum degree less than $\sqrt{m}(\log{m})^{-2}$ until the iteration where a vertex of degree at least $\sqrt{m}(\log{m})^{-2}$ is reached.
    \item If $D^*<\eps m$ and $d(n)<\sqrt{m}(\log{m})^6$, explore until  $X_i\geq\eps m$ or $X_i=0$. 
   
\end{itemize}
To apply the strategy, we prove another key lemma which shows that any two vertices whose explorations reach $\Omega(m)$ are with high probability in the same component, and so are any vertex whose exploration reaches  $\Omega(m)$ and any vertex of degree exceeding $\sqrt{m}(\log{m})^{-2}$ (we will refer to such vertices as \emph{high degree vertices} in the remainder of this section). Together, this tells us that we may stop an exploration if it reaches $\eps m$ or hits a high degree vertex.
\begin{figure}
    \centering
    \tikzset{every picture/.style={line width=0.75pt}} %set default line width to 0.75pt        

\begin{tikzpicture}[x=0.75pt,y=0.75pt,yscale=-1,xscale=1]
%uncomment if require: \path (0,240); %set diagram left start at 0, and has height of 240

%Shape: Ellipse [id:dp018152647777371023] 
\draw  [line width=3]  (120.1,54.75) .. controls (120.1,35.56) and (155.89,20) .. (200.05,20) .. controls (244.21,20) and (280,35.56) .. (280,54.75) .. controls (280,73.94) and (244.21,89.5) .. (200.05,89.5) .. controls (155.89,89.5) and (120.1,73.94) .. (120.1,54.75) -- cycle ;
%Shape: Ellipse [id:dp23144488450200273] 
\draw  [line width=3]  (360.1,184.75) .. controls (360.1,165.56) and (395.89,150) .. (440.05,150) .. controls (484.21,150) and (520,165.56) .. (520,184.75) .. controls (520,203.94) and (484.21,219.5) .. (440.05,219.5) .. controls (395.89,219.5) and (360.1,203.94) .. (360.1,184.75) -- cycle ;
%Straight Lines [id:da43699233212686106] 
\draw [color={rgb, 255:red, 0; green, 0; blue, 0 }  ,draw opacity=1 ][line width=3]  [dash pattern={on 3.38pt off 3.27pt}]  (180,100) -- (360,180) ;
%Straight Lines [id:da5783058288191817] 
\draw [color={rgb, 255:red, 208; green, 2; blue, 27 }  ,draw opacity=1 ][line width=3]    (230,60) -- (230,137) -- (230,180) ;
%Straight Lines [id:da059979122908381255] 
\draw [color={rgb, 255:red, 74; green, 144; blue, 226 }  ,draw opacity=1 ][line width=3]  [dash pattern={on 3.38pt off 3.27pt}]  (230,60) -- (400,180) ;
%Straight Lines [id:da6088534596138745] 
\draw [color={rgb, 255:red, 208; green, 2; blue, 27 }  ,draw opacity=1 ][line width=3]    (400,60) -- (400,140) -- (400,183) ;
%Straight Lines [id:da857488192157762] 
\draw [color={rgb, 255:red, 74; green, 144; blue, 226 }  ,draw opacity=1 ][line width=3]  [dash pattern={on 3.38pt off 3.27pt}]  (400,60) -- (230,180) ;
%Shape: Ellipse [id:dp7881250013700054] 
\draw  [color={rgb, 255:red, 245; green, 166; blue, 35 }  ,draw opacity=1 ][fill={rgb, 255:red, 245; green, 166; blue, 35 }  ,fill opacity=1 ][line width=3]  (130,80) .. controls (130,57.91) and (141.19,40) .. (155,40) .. controls (168.81,40) and (180,57.91) .. (180,80) .. controls (180,102.09) and (168.81,120) .. (155,120) .. controls (141.19,120) and (130,102.09) .. (130,80) -- cycle ;
%Curve Lines [id:da8262928277519199] 
\draw [color={rgb, 255:red, 0; green, 0; blue, 0 }  ,draw opacity=1 ][line width=3]  [dash pattern={on 3.38pt off 3.27pt}]  (420,150) .. controls (529.87,16.66) and (549.25,17.33) .. (200.1,20) ;
%Shape: Circle [id:dp9019684144139832] 
\draw  [color={rgb, 255:red, 0; green, 0; blue, 0 }  ,draw opacity=1 ][fill={rgb, 255:red, 0; green, 0; blue, 0 }  ,fill opacity=1 ][line width=2.25]  (230,50) .. controls (235.52,50) and (240,54.48) .. (240,60) .. controls (240,65.52) and (235.52,70) .. (230,70) .. controls (224.48,70) and (220,65.52) .. (220,60) .. controls (220,54.48) and (224.48,50) .. (230,50) -- cycle ;
%Shape: Circle [id:dp7879928344252409] 
\draw  [color={rgb, 255:red, 0; green, 0; blue, 0 }  ,draw opacity=1 ][fill={rgb, 255:red, 0; green, 0; blue, 0 }  ,fill opacity=1 ][line width=2.25]  (400,50) .. controls (405.52,50) and (410,54.48) .. (410,60) .. controls (410,65.52) and (405.52,70) .. (400,70) .. controls (394.48,70) and (390,65.52) .. (390,60) .. controls (390,54.48) and (394.48,50) .. (400,50) -- cycle ;
%Shape: Circle [id:dp428378078557647] 
\draw  [color={rgb, 255:red, 0; green, 0; blue, 0 }  ,draw opacity=1 ][fill={rgb, 255:red, 0; green, 0; blue, 0 }  ,fill opacity=1 ][line width=2.25]  (230,170) .. controls (235.52,170) and (240,174.48) .. (240,180) .. controls (240,185.52) and (235.52,190) .. (230,190) .. controls (224.48,190) and (220,185.52) .. (220,180) .. controls (220,174.48) and (224.48,170) .. (230,170) -- cycle ;
%Shape: Circle [id:dp9858049189508404] 
\draw  [color={rgb, 255:red, 0; green, 0; blue, 0 }  ,draw opacity=1 ][fill={rgb, 255:red, 0; green, 0; blue, 0 }  ,fill opacity=1 ][line width=2.25]  (400,170) .. controls (405.52,170) and (410,174.48) .. (410,180) .. controls (410,185.52) and (405.52,190) .. (400,190) .. controls (394.48,190) and (390,185.52) .. (390,180) .. controls (390,174.48) and (394.48,170) .. (400,170) -- cycle ;
%Shape: Ellipse [id:dp7825312116906887] 
\draw  [draw opacity=0][fill={rgb, 255:red, 184; green, 233; blue, 134 }  ,fill opacity=1 ][line width=3]  (490,115) .. controls (490,101.19) and (501.19,90) .. (515,90) .. controls (528.81,90) and (540,101.19) .. (540,115) .. controls (540,128.81) and (528.81,140) .. (515,140) .. controls (501.19,140) and (490,128.81) .. (490,115) -- cycle ;
%Straight Lines [id:da4348774490049879] 
\draw [color={rgb, 255:red, 0; green, 0; blue, 0 }  ,draw opacity=1 ][line width=3]    (525.44,139.5) -- (483.84,155.5) ;
%Straight Lines [id:da905543624950799] 
\draw [color={rgb, 255:red, 0; green, 0; blue, 0 }  ,draw opacity=1 ][line width=3]    (490.24,112.3) -- (483.84,155.5) ;

% Text Node
\draw (218,60) node [anchor=east] [inner sep=0.75pt]  [font=\LARGE] [align=left] {$\displaystyle u'$};
% Text Node
\draw (218,180) node [anchor=east] [inner sep=0.75pt]  [font=\LARGE] [align=left] {$\displaystyle x$};
% Text Node
\draw (412,180) node [anchor=west] [inner sep=0.75pt]  [font=\LARGE] [align=left] {$\displaystyle v'$};
% Text Node
\draw (412.79,59.5) node [anchor=west] [inner sep=0.75pt]  [font=\LARGE] [align=left] {$\displaystyle y$};
% Text Node
\draw (128,80) node [anchor=east] [inner sep=0.75pt]  [font=\LARGE,color={rgb, 255:red, 245; green, 166; blue, 35 }  ,opacity=1 ] [align=left] {$\displaystyle Z$};
% Text Node
\draw (117.31,50.5) node [anchor=east] [inner sep=0.75pt]  [font=\LARGE] [align=left] {$\displaystyle S_{u}$};
% Text Node
\draw (522,184.75) node [anchor=west] [inner sep=0.75pt]  [font=\LARGE] [align=left] {$\displaystyle S_{v}$};
% Text Node
\draw (542,115) node [anchor=west] [inner sep=0.75pt]  [font=\LARGE,color={rgb, 255:red, 184; green, 233; blue, 134 }  ,opacity=1 ] [align=left] {$\displaystyle Y$};

\end{tikzpicture}
    \caption{Switching used in the proof of Lemma \ref{theyjoin}. The black dotted lines indicate that there are no edge between $Z$ and $S_v$ outside of $Y$, and no paths between $S_u$ and $S_v$ outside of $Z$. The edge $yv'\notin Y$, so $y\notin Z$. The vertices $u'$ and $x$ are not in $Z$, so the fact that there are no paths between $S_u$ and $S_v$ outside of $Z$ ensures the absence of $u'v'$ and $xy$. }
    \label{fig:3.6}
\end{figure}
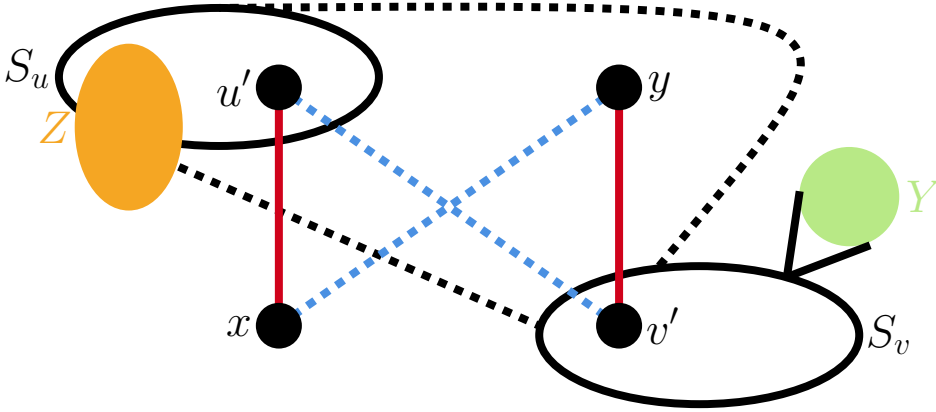
\begin{lem}
\label{theyjoin}
    Let $0<\eps<10^{-6}$ and let $\cD$ be such that $m$ is large enough and $n_1 \le (1-\frac{1}{\log m})n$. Fix $u,v\in[n]$ and let $\cF$ denote the event that $u,v$ are not in the same component, the exploration starting from $u$ has boundary size reaching at least $\eps m$ without reaching $0$ and either  (i) the same holds for $v$ or (ii) $v$ is a vertex of degree exceeding $\sqrt{m}(\log m)^{-2}$. Then $\p{\cF}<m^{-7\log\log{m}}$.
\end{lem}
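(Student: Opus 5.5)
We run the same kind of iterated switching argument as in Lemmas \ref{lem:mlogm-same-comp} and \ref{bigguyssticktogether}, guided by Figure \ref{fig:3.6}.

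\emph{Step 1: reduce to a statement about two sets.} On $\cF$, stop the exploration from $u$ at the first iteration $i$ with $X_i\ge\eps m$. Let $S_u$ be the set of vertices of $T_i$ that carry open half-edges, so the number of edges leaving $S_u$ into the unexplored part is at least $\eps m$. In case (i), define $S_v$ in the same way from the exploration from $v$. In case (ii), let $S_v$ be $v$ together with its neighbours. Then, with $\epsilon$ replaced by a small $\epsilon'$, Lemma \ref{lem:non-leaf-neighbour} shows that, off an event of probability $m^{-49\log\log m}$, many edges leave $S_v$ to vertices other than $v$. Since $d(v)\ge\sqrt m(\log m)^{-2}$, this gives at least $\sqrt m(\log m)^{-5}$ such edges. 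We do not need $S_u,S_v$ to be determined by the graph in a switching-invariant way. We only need that in a graph where $u,v$ lie in different components, the two explored subgraphs are disjoint and the edges leaving them go to disjoint vertex sets.

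\emph{Step 2: define the chain of events.} Let $\cF_0=\cF$. For $j\ge 1$, let $\cF_j$ be the event that there are disjoint sets $S_u\ni u$, $S_v\ni v$ of the above type, a set $Z$ of at most $2j$ vertices, and a set $Y$ of exactly $j$ edges from $S_v$ (in case (ii), from the boundary of $S_v$) to $Z$, such that $S_u\setminus Z$ and $S_v$ lie in different components of $G-Z$ and no edge other than those of $Y$ joins $Z$ to $S_v$.

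From a graph in $\cF_j$ we switch $\{u'x,yv'\}$ to $\{u'v',xy\}$. Here $u'\in S_u\setminus Z$ and $x$ is its unexplored neighbour, with $x\notin Z$. The vertex $v'\in S_v$ carries an edge to some $y\notin Z$ with $yv'\notin Y$. Because there is no path outside $Z$ between the two sides, $u'v'$ and $xy$ are non-edges and the switch is valid. In the result, we put $u'$ and $x$ into $Z$ and the new edge $u'v'$ into $Y$. The separation is preserved because the only new edge between the sides goes through $Z$. Hence the result lies in $\cF_{j+1}$.

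\emph{Step 3: count and iterate.} The number of forward choices is at least about $(\eps m-O(j))\cdot(\eps m-O(j))$ in case (i), and at least $\eps m\cdot\sqrt m(\log m)^{-5}$ in case (ii). A graph in $\cF_{j+1}$ can be switched back through the edge of $Y$ and an arbitrary second edge, giving at most $2(j+1)\cdot m$ reverse switchings. So for $j\le 10\log m$ each ratio $\p{\cF_j}/\p{\cF_{j+1}}$ is at most $O(\log m/(\eps^2 m))$ in case (i) and $O(\log^6 m/(\eps\sqrt m))$ in case (ii). Iterating $8\log m$ times gives a bound far below $m^{-7\log\log m}$. Adding the exceptional probability from Lemma \ref{lem:non-leaf-neighbour} completes the argument. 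In fact a polynomial saving per step already suffices after only $O(\log\log m)$ iterations.

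\emph{Main obstacle.} The delicate point is checking that the switched graph is again in the family, i.e.\ that the exploration-defined sets $S_u,S_v$ can be recovered. The graph changed, so running the exploration from $u$ on the new graph may follow a different path. I would avoid this by defining $\cF_j$ existentially, as ``there exist sets $S_u,S_v$ with the stated edge counts and separation'', rather than through the exploration itself. I would also ensure that the forward count does not use edges inside $Z$, which costs only $O(j\sqrt m)$ or $O(j\,d(n))$ edges. The case where $d(n)$ is very large also needs care: then a single vertex might absorb all the boundary edges. This can be handled by restricting to $u'$ of degree below $\sqrt m(\log m)^{-2}$, or by first applying Corollary \ref{sec6cor1}.
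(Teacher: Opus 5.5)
There is a genuine gap in Step 3. It is created by the very device you use to sidestep your ``main obstacle''. The bound of $2(j+1)m$ reverse switchings presumes that, from $G'\in\cF_{j+1}$ alone, the new edge $u'v'$ is confined to $O(j)$ edges (``the edge of $Y$''). With $S_u,S_v,Z,Y$ all existentially quantified, $Y$ is not a function of $G'$. Different preimages $G\in\cF_j$ can come with different witnesses, so the possible positions of $u'v'$ range over the union of $Y'$ over all witnesses of $G'$, and nothing in your definition bounds this by $O(j)$. The only available bound is the trivial $O(m^2)$. That swamps the forward count $\eps m\cdot\sqrt m(\log m)^{-5}$ in case (ii), and gives no saving against $(\eps m)^2$ in case (i). Switching on pairs (graph, witness) would repair the reverse count but break the end of the chain, since a graph can have exponentially many witnesses. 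There are also two smaller slips. The other new edge $xy$ joins $Z$ to $S_v$ whenever $y\in S_v$, so it must also go into $Y$, giving $|Y|\le 2j$ rather than exactly $j$. And in case (ii), your $S_v=\{v\}\cup N(v)$ is not preserved if $v'=v$.

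The paper's fix is to condition on $H_u$ and $H_v$, the explorations up to the first time the boundary reaches $\eps m$. In case (ii) it takes $H_v=H_1(v)$ and $S_v=\{w\in N(v):d(w)\ge 2\}$. Your obstacle does not actually arise. The switching only uses unexplored edges, and the exploration up to the stopping time reads only exposed edges, so $G'$ has the same $H_u,H_v$ and hence the same fixed sets $S_u,S_v$. Every $S_u$--$S_v$ edge must then pass through $Z$, and the paper builds ``between $i$ and $2i$ edges join $S_u$ to $S_v$'' into the definition of $\cF_{i+1}$. Since $u'v'$ is such an edge, each graph in $\cF_{i+1}$ has at most $4(i+1)m$ reverse switchings, whatever $Z,Y$ witnessed it.

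You also leave the high-degree issue open, and bounding $d(u')$ alone does not help, because $x$ enters $Z$ as well. The paper intersects with the event $\cA$ that every vertex of degree at least $\sqrt m(\log m)^{-2}$ shares a component with every vertex of degree at least $\sqrt m(\log m)^6$; this event is supplied by Corollary \ref{sec6cor1}. It then relabels so that $u$'s component has no vertex of degree above $\sqrt m(\log m)^6$. Finally it adds to the invariant that $G-Z$ has no path from $S_u$ to $S_v$ or to such a vertex. This persists because $G'-Z'\subseteq G-Z$, and it forces $d(u'),d(x)\le\sqrt m(\log m)^6$, so the forward count loses only $O(i\sqrt m(\log m)^6)$ edges.
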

\begin{proof}
We let $\cA$ denote the event that no vertex of degree at least  $\sqrt{ m}(\log m)^{-2}$ lies in a different component from any vertex of degree at least  $\sqrt{ m}(\log m)^{6}$. We let $\cB$ be the event that (ii) holds and $v$ has fewer than  $\sqrt{m}(\log{m})^{-5}$ non-leaf neighbours. By Corollary \ref{sec6cor1} and Lemma \ref{lem:non-leaf-neighbour}, $\p{\cA^\complement \cup \cB}=o(m^{-8\log\log{m}})$, so  it is enough to show 
$\p{(\cF \cap A)\setminus \cB}<m^{-8\log\log{m}}$.
We prove this conditioned on the choice of $H_u$,   which is the exploration  from $u$   up to the time at which the boundary first reached size $\epsilon m$, and $H_v$, which is defined symmetrically if (i) holds and is simply $H_1(v)$ if (ii) holds.  We use  ${\cal F}'=\cF'(H_u,H_v)$ to denote the event that $\cF$ holds with a specific  choice of   $H_u,H_v$. We use $\cF_0=(\cF' \cap\cA)\setminus\cB$. 

For every $G\in\cF'$,  if (i) holds let $S_u=S_u(G),S_v=S_v(G)$ denote the set of unexplored vertices in $H_u,H_v$ respectively when their boundary size first reaches $\eps m$.  If (ii) holds, the definition of $S_u$ is unchanged but define $S_v=\{w\in N(v):d(w)\geq 2\}$ instead. If (ii) holds, we only consider $H_v$ satisfying the event $\{|S_v|\geq\sqrt{m}(\log{m})^{-5}\}$ as for other choices, $\cF_0$ is empty. 
Relabelling $u$ and $v$ if necessary, we can assume that 
for every graph in ${\mathcal F}_0$, no vertex in the component containing $u$ has degree exceeding $\sqrt{m}(\log{m})^6$.  

For $i\geq0$ let $\cF_{i+1}$ denote the set of graphs with the given choice of $H_u$ and $H_v$ for  which  the number of edges 
between $S_u$ and $S_v$  is between  $i$ and $2i$,  and there is a set $Z\subseteq [n]\setminus S_v$ of  at most $2i$  vertices each of degree  at most $\sqrt{m}(\log{m})^6$, and a set $Y$ of at most $2i$ edges incident to $S_v$, such that the following hold:
\begin{enumerate}
    \item There is no path from $S_u$ to the union of $S_v$ and the set of vertices of degree exceeding $\sqrt{m}(\log{m})^6$ in $G-Z$, and
    \item All edges between $Z$ and $S_v$ are in $Y$.
\end{enumerate}
 \par 
For $0\leq i<m^{1/3}$, if $G\in\cF_i$ then there are at least $\epsilon m -2i(\sqrt{m}(\log{m})^6+1)$ unexplored  edges $u'x$ with   $u'\in S_u\setminus Z$ and $x\notin Z$, and  at least $\sqrt{m}(\log m)^{-5}-2i$ unexplored edges  with $v' \in S_v$ and $yv'\notin Y$.  
Note that $u'v'$ is not an edge, as there  is no path from $S_u$ to $S_v$ in $G-Z$; since $v'y \not \in Y$, $y$ is not in $Z$ so $yx$ is not an edge, as  $u'xyv'$  cannot be a path from $S_u$ to $S_v$ in $G-Z$. 
So the switching is valid. By the definition of $Z$, both $u'$ and $x$ have degree at most $\sqrt{m}(\log{m})^6$, and neither vertex is in $S_v$. Furthermore, after switching $\{u'x,yv'\}$ to get $\{u'v',yx\}$, the only edge between $u'$ and $S_v$ is $u'v'$, and the only possible edge between $x$ and $S_v$ is $yx$, since there were no such edges before the switching. Thus the resulting graph is in $\cF_{i+1}$ by putting $u',x'$ in $Z$ and (at most) $u'v',xy$ in $Y$.  So for every $G \in {\mathcal F}_i$ there are at least $ m^{13/9}$ switchings to graphs in $\cF_{i+1}$. 

There are at most $2(i+1)$ edges between $S_u,S_v$, so there are at most $4(i+1)m$ switchings from $\cF_{i+1}$ to $\cF_i$. So
\begin{equation*}
    \Cprob{\cF_0}{H_u,H_v}<\left(\frac{4m(m^{1/3})}{m^{13/9}}\right)^{m^{1/3}}<m^{-m^{1/4}}.
\end{equation*}
Thus $\p{\cF \cap \cA-\cB}<m^{-m^{1/4}}$.
\end{proof}

We now move onto studying the process $X_i$ for any starting vertex $v\in[n]$. The goal is to show that if  $v$ is in a linear sized component, then 
for some $i$, $X_i$ will be $\Omega(m)$ or $H_i$  will contain a high degree vertex, via a martingale argument. Since $X_i-X_{i-1}$ does not have a good upper bound, we study a truncated version of $X_i-X_{i-1}$, that we call $X_i^*$. For $1 \le i\leq m$, if $X_j=0$ for some $j\leq i-1$, set $X^*_i=0$, and otherwise define:
\begin{equation*}
    X^*_i=\begin{cases}
        \min(3d_i(v_i),X_i-X_{i-1}),&\text{if }d_i(v_i)<\frac{\sqrt{m}}{(\log m)^2};\\
        0.9d_i(v_i),&\text{if }d_i(v_i)\geq \frac{\sqrt{m}}{(\log m)^2}.
    \end{cases}
\end{equation*}
 Define $Y_0=0$, and for $1\leq i\leq m$ let
\begin{equation*}
    Y_i=Y_{i-1}+X^*_i-\Cexp{X^*_i}{H_{i-1}}.
\end{equation*}

\begin{lem}\label{lem:martingale}
If $\cD$ is such that $m$ is large enough, then for any vertex $v\in[n]$, for all $0 \le i \le m$, $\p{Y_i \le -\frac{m}{\log m}}<e^{-m^{1/10}}$. 
\end{lem}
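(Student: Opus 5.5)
The sequence $(Y_i)_{0\le i\le m}$ is a martingale with respect to the filtration generated by $H_0,H_1,\dots$, since by construction each increment $X^*_i-\Cexp{X^*_i}{H_{i-1}}$ has conditional mean zero. The plan is to bound the lower tail by a Freedman-type inequality for martingales whose increments are bounded below. Azuma with a worst-case increment bound would be too weak, because $d_i(v_i)$ can be as large as $\sqrt{m}(\log m)^{-2}$ or larger.

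The first step is to bound the increments. If $d_i(v_i)\ge \sqrt m(\log m)^{-2}$, then $X^*_i=0.9d_i(v_i)$ is determined by $H_{i-1}$, since $v_i$ and $d_i(v_i)$ are. So the increment $Y_i-Y_{i-1}$ is exactly $0$ and such steps contribute nothing. If instead $d:=d_i(v_i)<\sqrt m(\log m)^{-2}$, the displayed inequality $X_i-X_{i-1}\ge -2d_i(v_i)$ from Section~\ref{sec:outline} gives $X^*_i\in[-2d,3d]$. Hence
\[
|Y_i-Y_{i-1}|\le 5d<M:=5\sqrt m(\log m)^{-2}.
\]
For the conditional variance, $\mathrm{Var}(Y_i-Y_{i-1}\mid H_{i-1})\le 25 d_i(v_i)^2\le M\cdot 5d_i(v_i)$. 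Since each open half-edge is exposed at most once, $\sum_i d_i(v_i)\le 2m$. The predictable quadratic variation therefore satisfies, deterministically,
\[
W_i:=\sum_{j\le i}\mathrm{Var}(Y_j-Y_{j-1}\mid H_{j-1})\le 10mM=50\,m^{3/2}(\log m)^{-2}.
\]

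The second step is to apply Freedman's inequality to $-Y$, whose increments are bounded by $M$. With $t=m/\log m$ and $\sigma^2=50m^{3/2}(\log m)^{-2}$ it gives
\[
\p{Y_i\le -t}\le \exp\left(-\frac{t^2}{2(\sigma^2+Mt)}\right).
\]
Here $t^2=m^2(\log m)^{-2}$, while $\sigma^2+Mt=O(m^{3/2}(\log m)^{-2})$. The exponent is therefore of order $\sqrt m$, which exceeds $m^{1/10}$ for large $m$. The argument does not depend on the starting vertex $v$, and the bound holds for each $i\le m$.

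The main obstacle is this variance bookkeeping. A naive Azuma bound, $\sum_i(5d_i)^2\le M\sum_i 5d_i$, gives the same $m^{3/2}$ order, so Azuma–Hoeffding in the form $\exp(-t^2/(2\sum c_i^2))$ with $c_i=5d_i(v_i)$ would also work. The catch is that the $c_i$ are random rather than fixed in advance. That is why I would use Freedman, whose bound on the quadratic variation $W_i$ only needs to hold almost surely. I also need to check the edge cases. After the walk has hit $0$ we have $X^*_i=0$ and the increment vanishes. The total $\sum_i d_i(v_i)$ counts exposed half-edges, so it is at most $2m$ even though $d_i(v_i)$ is the number of open half-edges at $v_i$ at the start of iteration $i$.
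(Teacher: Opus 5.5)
Your proof is correct and takes essentially the same route as the paper. Both use the same martingale, both note that steps with $d_i(v_i)\ge \sqrt{m}(\log m)^{-2}$ contribute nothing, and both rest on $\sum_j d_j(v_j)^2\le \max_j d_j(v_j)\cdot\sum_j d_j(v_j)=O(m^{3/2}(\log m)^{-2})$. The differences are minor: the paper applies Azuma directly with the predictable bounds $c_j=O(d_j(v_j))$, which is legitimate because $\sum_j c_j^2$ is bounded almost surely, whereas you use Freedman; and your increment bound $5d_i(v_i)$ is the accurate one, since $X_i^*\in[-2d_i(v_i),3d_i(v_i)]$ and the paper's stated $3d_i(v_i)$ is too small, affecting only constants.
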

\begin{proof}
 By definition $\{Y_i\}_{i=0}^m$ is a martingale with respect to $\{H_i\}_{i=0}^m$, and $|Y_i-Y_{i-1}|=|X_i^*-\Cexp{X_i^*}{H_{i-1}}|$ is $0$ if $d_i(v_i) > \frac{\sqrt{m}}{(\log m)}$ and is at most $3d_i(v_i) \le \frac{3\sqrt{m}}{(\log m)^2}$ otherwise. Furthermore,
 \begin{equation*}
\sum_{j\leq i:d_j(v_j)<\frac{\sqrt{m}}{(\log{m})^2}} (3d_j(v_j))^2 \le\frac{3\sqrt{m}}{(\log{m})^2}\sum_{j\leq i}d_j(v_j)\leq\frac{3m^{3/2}}{(\log{m})^2}     
\end{equation*}
so by Azuma's inequality \cite{azuma},
\begin{equation*}
    \p{Y_i<-\frac{m}{\log m}}<\exp\left\{\frac{-m^2(\log m)^{-2}}{3m^{3/2}(\log m)^{-2}}\right\}<e^{-m^{1/10}}.\qedhere
\end{equation*}
\end{proof}

\begin{figure}
    \centering
    \tikzset{every picture/.style={line width=0.75pt}} %set default line width to 0.75pt        

\begin{tikzpicture}[x=0.75pt,y=0.75pt,yscale=-1,xscale=1]
%uncomment if require: \path (0,220); %set diagram left start at 0, and has height of 220

%Straight Lines [id:da8733037974958292] 
\draw [color={rgb, 255:red, 74; green, 144; blue, 226 }  ,draw opacity=1 ][line width=2.25]  [dash pattern={on 2.53pt off 3.02pt}]  (130,60) -- (510,180) ;
%Straight Lines [id:da3359585018778789] 
\draw [color={rgb, 255:red, 208; green, 2; blue, 27 }  ,draw opacity=1 ][line width=2.25]    (510,60) -- (510,120) -- (510,180) ;
%Straight Lines [id:da917935894533302] 
\draw [color={rgb, 255:red, 74; green, 144; blue, 226 }  ,draw opacity=1 ][line width=2.25]  [dash pattern={on 2.53pt off 3.02pt}]  (510,60) -- (130,180) ;
%Shape: Ellipse [id:dp60900445683508] 
\draw  [line width=2.25]  (69,115) .. controls (69,62.53) and (89.15,20) .. (114,20) .. controls (138.85,20) and (159,62.53) .. (159,115) .. controls (159,167.47) and (138.85,210) .. (114,210) .. controls (89.15,210) and (69,167.47) .. (69,115) -- cycle ;
%Curve Lines [id:da16656843007572764] 
\draw [color={rgb, 255:red, 208; green, 2; blue, 27 }  ,draw opacity=1 ][line width=2.25]    (130,60) .. controls (211.2,59.85) and (201.2,179.51) .. (130,180) ;
%Shape: Circle [id:dp774363905106755] 
\draw  [color={rgb, 255:red, 0; green, 0; blue, 0 }  ,draw opacity=1 ][fill={rgb, 255:red, 0; green, 0; blue, 0 }  ,fill opacity=1 ][line width=2.25]  (130,50) .. controls (135.52,50) and (140,54.48) .. (140,60) .. controls (140,65.52) and (135.52,70) .. (130,70) .. controls (124.48,70) and (120,65.52) .. (120,60) .. controls (120,54.48) and (124.48,50) .. (130,50) -- cycle ;
%Shape: Circle [id:dp912557750520373] 
\draw  [color={rgb, 255:red, 0; green, 0; blue, 0 }  ,draw opacity=1 ][fill={rgb, 255:red, 0; green, 0; blue, 0 }  ,fill opacity=1 ][line width=2.25]  (130,170) .. controls (135.52,170) and (140,174.48) .. (140,180) .. controls (140,185.52) and (135.52,190) .. (130,190) .. controls (124.48,190) and (120,185.52) .. (120,180) .. controls (120,174.48) and (124.48,170) .. (130,170) -- cycle ;
%Shape: Circle [id:dp9112463790462468] 
\draw  [color={rgb, 255:red, 0; green, 0; blue, 0 }  ,draw opacity=1 ][fill={rgb, 255:red, 0; green, 0; blue, 0 }  ,fill opacity=1 ][line width=2.25]  (510,50) .. controls (515.52,50) and (520,54.48) .. (520,60) .. controls (520,65.52) and (515.52,70) .. (510,70) .. controls (504.48,70) and (500,65.52) .. (500,60) .. controls (500,54.48) and (504.48,50) .. (510,50) -- cycle ;
%Shape: Circle [id:dp6059528227857944] 
\draw  [color={rgb, 255:red, 0; green, 0; blue, 0 }  ,draw opacity=1 ][fill={rgb, 255:red, 0; green, 0; blue, 0 }  ,fill opacity=1 ][line width=2.25]  (510,170) .. controls (515.52,170) and (520,174.48) .. (520,180) .. controls (520,185.52) and (515.52,190) .. (510,190) .. controls (504.48,190) and (500,185.52) .. (500,180) .. controls (500,174.48) and (504.48,170) .. (510,170) -- cycle ;
%Straight Lines [id:da41325578236158844] 
\draw [color={rgb, 255:red, 208; green, 2; blue, 27 }  ,draw opacity=1 ][line width=2.25]    (140,60) -- (220,30) ;
%Straight Lines [id:da8641515810131055] 
\draw [color={rgb, 255:red, 208; green, 2; blue, 27 }  ,draw opacity=1 ][line width=2.25]    (140,60) -- (220,60) ;
%Shape: Circle [id:dp17348299555350188] 
\draw  [color={rgb, 255:red, 0; green, 0; blue, 0 }  ,draw opacity=1 ][fill={rgb, 255:red, 0; green, 0; blue, 0 }  ,fill opacity=1 ][line width=2.25]  (220,20) .. controls (225.52,20) and (230,24.48) .. (230,30) .. controls (230,35.52) and (225.52,40) .. (220,40) .. controls (214.48,40) and (210,35.52) .. (210,30) .. controls (210,24.48) and (214.48,20) .. (220,20) -- cycle ;
%Shape: Circle [id:dp5459249305473383] 
\draw  [color={rgb, 255:red, 0; green, 0; blue, 0 }  ,draw opacity=1 ][fill={rgb, 255:red, 0; green, 0; blue, 0 }  ,fill opacity=1 ][line width=2.25]  (220,50) .. controls (225.52,50) and (230,54.48) .. (230,60) .. controls (230,65.52) and (225.52,70) .. (220,70) .. controls (214.48,70) and (210,65.52) .. (210,60) .. controls (210,54.48) and (214.48,50) .. (220,50) -- cycle ;
%Straight Lines [id:da8759767230439282] 
\draw [color={rgb, 255:red, 0; green, 0; blue, 0 }  ,draw opacity=1 ][line width=2.25]    (220,30) -- (260,10) ;
%Straight Lines [id:da8781007885345837] 
\draw [color={rgb, 255:red, 0; green, 0; blue, 0 }  ,draw opacity=1 ][line width=2.25]  [dash pattern={on 2.53pt off 3.02pt}]  (220,30) -- (260,30) ;
%Straight Lines [id:da6963623809645355] 
\draw [color={rgb, 255:red, 0; green, 0; blue, 0 }  ,draw opacity=1 ][line width=2.25]  [dash pattern={on 2.53pt off 3.02pt}]  (220,60) -- (260,60) ;
%Straight Lines [id:da6705623288576883] 
\draw [color={rgb, 255:red, 0; green, 0; blue, 0 }  ,draw opacity=1 ][line width=2.25]    (470,180) -- (510,180) ;
%Straight Lines [id:da38794033799069494] 
\draw [color={rgb, 255:red, 0; green, 0; blue, 0 }  ,draw opacity=1 ][line width=2.25]    (470,200) -- (510,180) ;

% Text Node
\draw (118,60) node [anchor=east] [inner sep=0.75pt]  [font=\LARGE] [align=left] {$\displaystyle v$};
% Text Node
\draw (118,180) node [anchor=east] [inner sep=0.75pt]  [font=\LARGE] [align=left] {$\displaystyle u$};
% Text Node
\draw (522,180) node [anchor=west] [inner sep=0.75pt]  [font=\LARGE] [align=left] {$\displaystyle b$};
% Text Node
\draw (522,60) node [anchor=west] [inner sep=0.75pt]  [font=\LARGE] [align=left] {$\displaystyle a$};
% Text Node
\draw (39.5,112.5) node  [font=\Huge] [align=left] {$\displaystyle H$};
% Text Node
\draw (190,110.5) node [anchor=west] [inner sep=0.75pt]  [font=\LARGE,color={rgb, 255:red, 208; green, 2; blue, 27 }  ,opacity=1 ] [align=left] {$\displaystyle h$};
% Text Node
\draw (198.5,70.5) node  [font=\LARGE,color={rgb, 255:red, 208; green, 2; blue, 27 }  ,opacity=1 ] [align=left] {$ $};

\end{tikzpicture}
    \caption{Switching used in the proof of Lemma \ref{lem:prob-bad-edge}. Here, the three types of ``bad'' edges are all illustrated: an edge not in $E(H)$ from $v$ back into $V(H)$, or onto a degree $1$ or $2$ vertex. Vertex $b$ has degree at least $3$.}
    \label{fig:3.8}
\end{figure}
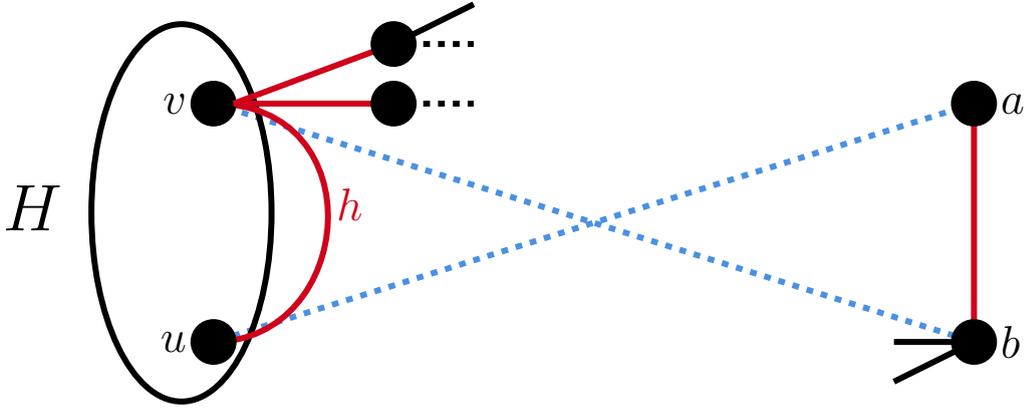
Now, it remains to show that (1) $X_i^*$ is a good proxy for $X_i-X_{i-1}$, and (2) lower bound  $\Cexp{X_i^*}{H_{i-1}}$. Our proofs of both (1) and (2) rely on the fact that the probability that a single open half-edge gets matched either onto a vertex of degree no more than $2$, or onto $V(H_{i-1})$, is small. This fact is shown by the following lemma.
\begin{lem}\label{lem:prob-bad-edge}
Fix $0<\eps<10^{-6}$. Let $\cD$ be such that $m$ is large enough and $n_1,n_2 \le 10^{-6}m$. Let $H$ be a possible subgraph of $G(\cD)$ and let $h$ be a half-edge out of $v\in V(H)$ and not matched in $E(H)$. Suppose $H$ is such that the following hold:
\begin{enumerate}
    \item $\sum_{w\in V(H)}d(w)\le \epsilon m$, and
    \item Either $D^*\leq\eps m$, or $d(w)<\sqrt{m}(\log{m})^{-2}$ for all $w \in V(H)$.
\end{enumerate}
Let $\cF$ denote the event that $h$ is matched onto $V(H)$ or onto a vertex of degree at most $2$. Let $\cA$ denote the event that no neighbour of $v$ outside of $V(H)$ has degree $10d(v)$ or more. Then $\Cprob{\cF\cap\cA}{H\subseteq G(\cD)}<0.001$. If $D^*\leq\eps m$, then $\Cprob{\cF}{H\subseteq G(\cD)}<0.001$.
\end{lem}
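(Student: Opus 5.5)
Conditioning on $H\subseteq G(\cD)$ throughout, I will union bound over the half-edges $h'$ that $h$ could be matched to in a bad way. These are the half-edges of vertices of $V(H)$ not used in $E(H)$, together with all half-edges of vertices of degree $1$ or $2$. Their total number is at most $\sum_{w\in V(H)}d(w)+n_1+2n_2\le \epsilon m+3\cdot 10^{-6}m<4\cdot 10^{-6}m$. It therefore suffices to show that for each such $h'$ the conditional probability that $hh'\in\cR$ (intersected with $\cA$ in the first case) is $O(1/m)$, say at most $\frac{4}{3m}$ or $\frac{10}{m}$. That would give a bound of $O(10^{-5})<0.001$.

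\emph{Case $D^*\le\epsilon m$.} Here I apply Lemma~\ref{newlem:1/m} directly with $N$ the matching of $E(H)$, which has size at most $\epsilon m\le m/8$, and with $h_v=h$, $h_w=h'$. The condition $D^*\le m/100$ holds, and $D(v)\le D^*<m/100$ automatically, so the event ``$D(v)<m/100$'' is free. The lemma gives $\p{hh'\in\cR\mid H}\le \frac{4}{3m}$ for each $h'$ with $w\neq v$. If $h'$ belongs to $v$ itself, the edge would be a loop and the probability is $0$. Summing gives the second statement.

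\emph{General case, on $\cA$.} Here I redo the switching proof of Lemma~\ref{newlem:1/m} in the form of Figure~\ref{fig:3.8}. Let $\cF_{h'}$ be the event that $hh'\in\cR$ and $\cA$ holds, and compare it with graphs in which $h$ is matched elsewhere. From a graph in $\cF_{h'}$, I switch $\{vw, ab\}$ to $\{va, wb\}$ for edges $ab\notin E(H)$ whose endpoints avoid $N(v)\cup N(w)$. The reverse count is at most $2$ per target graph, exactly as in Lemma~\ref{newlem:1/m}. The edges that must be excluded are:
\begin{itemize}
\item the edges of $H$, at most $\epsilon m$;
\item the edges incident to $N(w)$, numbering at most $D(w)$;
\item the edges incident to $N(v)$, numbering at most $D(v)$.
\end{itemize}

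\emph{Bounding $D(w)$ and $D(v)$.} If $w\in V(H)$, then $d(w)<\sqrt m(\log m)^{-2}$. If $w$ has degree at most $2$, then $d(w)\le2$. Since $a,b$ need only avoid creating the edges $va$ and $wb$, it is enough to exclude $b\in N(w)$ rather than all edges touching $N(w)$. That removes only the edges between neighbours of $w$, at most $\binom{d(w)}{2}<m/100$ of them, as in Lemma~\ref{newlem:1/m}. Similarly, for $v$ it suffices to exclude $a\in N(v)$, which costs $D(v)$ edges.

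The main obstacle is bounding $D(v)$, and this is where $\cA$ enters. Neighbours of $v$ inside $V(H)$ contribute at most $\sum_{w\in V(H)}d(w)\le\epsilon m$. On $\cA$, each neighbour outside $V(H)$ has degree less than $10d(v)$. In the case $d(v)<\sqrt m(\log m)^{-2}$ this gives at most $d(v)\cdot 10d(v)<10m(\log m)^{-4}$. So $D(v)\le \epsilon m+o(m)$.

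Altogether, at least $m-2\epsilon m-m/100-o(m)>\tfrac34 m$ edges $ab$ are usable, giving at least $\tfrac32 m$ switchings out of each graph in $\cF_{h'}$. Hence $\p{\cF_{h'}\mid H}\le\frac{4}{3m}$.

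When $D^*\le\epsilon m$ but the degree condition fails, I fall back on the first case, which already gives the bound without $\cA$. Summing over the fewer than $4\cdot10^{-6}m$ choices of $h'$ gives $\p{\cF\cap\cA\mid H}<0.001$.
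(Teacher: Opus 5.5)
Your proof is correct, but it is organized differently from the paper's.

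The paper runs one switching directly between $\cF\cap\cA$ (or $\cF$) and $\cF^\complement$. It swaps the edge $vu$ containing $h$ with an edge $ab$ chosen so that $b$ has degree at least $3$ and lies outside $V(H)\cup N(v)$. The new partner of $h$ is therefore good, and the switched graph is in $\cF^\complement$. The reverse count is the number of edges $au$ meeting $V(H)$ or a vertex of degree at most $2$, which is at most $(10^{-5}+\epsilon)m$. This is set against at least $(1-4\epsilon-10^{-5})m$ forward switchings.

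You instead fix the bad partner $h'$ and prove the pointwise bound $\Cprob{\{hh'\in\cR\}\cap\cA}{H\subseteq G(\cD)}\le \frac{4}{3m}$. When $D^*\le\epsilon m$ you cite Lemma~\ref{newlem:1/m}, where $D(v)\le D^*$ makes its extra event automatic. Otherwise you re-run its switching, with $\cA$ and the degree bound on $V(H)$ replacing the hypothesis $D(v)<m/100$. You then union bound over fewer than $4\cdot 10^{-6}m$ choices of $h'$. This is the same double count partitioned by target: the paper's reverse count is the sum, over bad $h'$, of your reverse count of one.

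Your route buys a lighter forward switching. You only need to destroy the specific pair $hh'$, not send $h$ to a good vertex. So you need not discard edges at vertices of degree at most $2$, or edges at $V(H)$ beyond $E(H)$.

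Your refinement to excluding only edges inside $N(w)$ is valid. Once every edge meeting $N(v)$ is discarded, both orientations of $ab$ avoid $N(v)\cup\{v\}$. You may then orient $ab$ so that the endpoint joined to $w$ lies outside $N(w)$.

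Your route also gives the stronger statement that each individual bad pairing has conditional probability $O(1/m)$. The paper's route avoids the union bound and gets the whole event from a single switching.
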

\begin{proof}
    We consider the switching from $\cF\cap\cA$ (or from $\cF$ if $D^*\leq\eps m$) into $\cF^\complement$ swapping $\{vu,ab\}$ with $\{vb,au\}$, where $h$ is matched onto $u$, which is either in $V(H)$ or has degree at most $2$, and $ab$ is such that $b$ has degree at least $3$ and is not in $V(H)\cup N(v)$, and $a$ is neither     $u$ nor a neighbour of $u$ (see  Figure \ref{fig:3.8}). Note that $vu,ab$ are not in $E(H)$ so $H$ being a subgraph is preserved.\par
    Let $G\in\cF\cap\cA$ (or $G\in\cF$ if $D^*\leq\eps m$) such that $H\subseteq G$. For the choice of $ab$, {we exclude the edges in $E(G)\setminus E(H)$ which are incident to $V(H)\cup N(v)$ or to a vertex of degree at most $2$, or have both endpoints in $N(u)\cup\{u\}$. Orient any non-excluded edge $ab$ such that $a\notin N(u)\cup\{u\}$; then $b$ is not in $V(H)\cup N(v)$ and has degree at least $3$, so such $ab$ is a valid choice for switching with $vu$. There are:
\begin{itemize}
    \item at most $\sum_{w\in V(H)}d(w)\leq\eps m$ edges incident to $V(H)$;
    \item at most $\sum_{w\in N(v)}d(w)$ edges incident to $N(v)$;
    \item at most $\max\left(\binom{d(u)+1}{2},\sum_{w\in N(u)}d(w)\right)$ edges inside $N(u)\cup\{u\}$; and
    \item at most $n_1+2n_2<10^{-5}m$ edges incident to a vertex of degree at most $2$.
\end{itemize}
Now, if $D^*\leq\eps m$, then $\sum_{w\in N(v)}d(w),\sum_{w\in N(u)}d(w)\leq D^*\leq\eps m$.} Otherwise  $d(w)<\sqrt{m}(\log{m})^{-2}$ for all $w\in V(H)$, so since $u\in V(H)$ or has degree at most $2$, $d(v),d(u)<\sqrt{m}(\log{m})^{-2}$. Furthermore, in this case $G\in\cA$ so every $w\in N(v)$ has degree at most $10d(v)$, so $\sum_{w\in N(v)}d(w)\leq 10(d(v))^2<10m(\log{m})^{-4}<\eps m$ and $\binom{d(u)+1}{2}<\eps m$. In both cases, we forbid at most $(3\eps+10^{-5})m$ edges from $E(G)\setminus E(H)$. So there at least $(1-4\eps-10^{-5})m$ choices of $\{vu,ab\}$ to switch from $\cF$ to $\cF^\complement$.\par
From $\cF^\complement$ to $\cF$, since $au$ is an edge incident to $V(H)$ or to a vertex of degree at most $2$, there are at most $n_1+2n_2+\sum_{w\in V(H)}d(w)<(10^{-5}+\eps)m$ choices of $\{vb,au\}$. Thus
\begin{equation*}
    \Cprob{\cF\cap\cA}{H\subseteq G(\cD)}\leq\Cprob{\cF^\complement}{H\subseteq G(\cD)}\frac{(10^{-5}+\eps)m}{(1-4\eps-10^{-5})m}<0.001
\end{equation*}
and similarly $\Cprob{\cF}{H\subseteq G(\cD)}<0.001$ if $D^*\leq\eps m$.
\end{proof}
\begin{cor}\label{lem:exp-x-star}
Fix $0<\eps<10^{-6}$. Let $\cD$ be such that $m$ is large enough and $n_1,n_2  \le 10^{-6}m$. Let $v\in[n]$ be fixed. If, for $i\geq1$, after exposing the first $i-1$ iterations, we have: 
\begin{enumerate}
    \item $X_{i-1} \neq 0$, 
    \item $2|E(H_{i-1})|+|X_{i-1}| \le \epsilon m$, 
    \item Either $D^*\leq\eps m$ or $d(w)<\sqrt{m}(\log{m})^{-2}$ for all $w \in V(H_{i-1})$
\end{enumerate}
then $\Cexp{X_i^*}{H_{i-1}}\geq 0.9d_i(v_i)$.
\end{cor}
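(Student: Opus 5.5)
We split according to the two cases in the definition of $X_i^*$. If $d_i(v_i)\ge \sqrt{m}(\log m)^{-2}$ then $X_i^*=0.9d_i(v_i)$ deterministically, and there is nothing to prove. So assume $d:=d_i(v_i)<\sqrt{m}(\log m)^{-2}$, in which case $X_i^*=\min(3d,X_i-X_{i-1})$.

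\emph{Step 1: a per-half-edge bound.} Write $h_1,\dots,h_d$ for the open half-edges of $v_i$, exposed one at a time. When $h_j$ is exposed we condition on $H=H_{i-1}$ augmented by the edges already revealed for $h_1,\dots,h_{j-1}$. This graph $H'$ still satisfies the hypotheses of Lemma~\ref{lem:prob-bad-edge}. Indeed, $\sum_{w\in V(H')}d(w)$ is at most $2|E(H_{i-1})|+X_{i-1}$ plus the degrees of at most $d$ newly added vertices. If some newly added vertex has degree at least $10d$, we are in the good event discussed below. Otherwise these degrees contribute at most $10d^2\le 10m(\log m)^{-4}$, so taking $\epsilon$ slightly smaller in hypothesis (2) keeps the sum below $\epsilon m$. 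Condition (3) gives condition (2) of Lemma~\ref{lem:prob-bad-edge}, except that a newly attached vertex might have degree at least $\sqrt m(\log m)^{-2}$; such a vertex has degree at least $10d$ anyway.

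Call $h_j$ \emph{bad} if it is matched into $V(H')$ or onto a vertex of degree at most $2$, and \emph{big} if it is matched to a vertex outside $V(H')$ of degree at least $10d$. By Lemma~\ref{lem:prob-bad-edge}, $\mathbf P\{h_j \text{ bad and not big}\mid\cdot\}<0.001$; when $D^*\le\epsilon m$ we have $\mathbf P\{h_j\text{ bad}\}<0.001$ directly.

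\emph{Step 2: converting to the expectation.} If some $h_j$ is big, the vertex it reaches contributes at least $10d-2$ to $X_i-X_{i-1}$. The other half-edges lose at most $2$ each, so $X_i-X_{i-1}\ge 10d-2-2d\ge 3d$ once $d\ge1$ (for $d=1$ the increase is already at least $8$). Hence $X_i^*=3d$ on this event. If no $h_j$ is big, every non-bad $h_j$ contributes at least $+1$ and every bad one at least $-2$, so
\[
X_i-X_{i-1}\ge d-3B,
\]
where $B$ is the number of bad half-edges, and therefore $X_i^*\ge \min(3d,\,d-3B)$. Using the stopping-time/sequential exposure above,
\[
\mathbf E[B\,\mathbf 1_{\text{no big}}]\le \sum_j \mathbf P\{h_j\text{ bad},\ \text{not big}\}\le 0.001d.
\]
This gives $\Cexp{X_i^*}{H_{i-1}}\ge d-0.003d-\dots\ge 0.9d$, since in every scenario $X_i^*\ge d-3B$ (the big case gives $3d\ge d$).

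\emph{Main obstacle.} The main difficulty is justifying the use of Lemma~\ref{lem:prob-bad-edge} along the sequential exposure within one iteration. Its event $\cA$ concerns all neighbours of $v$ outside $V(H)$, not just the one hit by $h_j$. The plan is to split on whether any neighbour of $v_i$ outside $V(H')$ has degree at least $10d$:
\begin{itemize}
\item If such a neighbour is reached within this iteration, $X_i^*=3d$.
\item If such a neighbour lies in $V(H_{i-1})$, it was already accounted for.
\end{itemize}
A further point is that multiple back-edges to the same vertex are also counted correctly by the ``$-2$ per bad half-edge'' bound.
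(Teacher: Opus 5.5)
Your decomposition is the paper's: split on whether $v_i$ gets a neighbour of large degree outside $H_{i-1}$ (then $X_i^*=3d$), and otherwise use $X_i-X_{i-1}\ge d-3B$ and bound the bad probabilities with Lemma~\ref{lem:prob-bad-edge}. But the sequential exposure you introduce leaves a real gap in Step~1.

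For a single half-edge, ``bad'' and ``big'' are disjoint. So your asserted bound $\mathbf P\{h_j\text{ bad and not big}\}<0.001$ is just $\mathbf P\{h_j\text{ bad}\}<0.001$, and Lemma~\ref{lem:prob-bad-edge} gives that only when $D^*\le\eps m$. Otherwise the lemma bounds only $\mathbf P(\cF\cap\cA)$. The event $\cA$ requires that \emph{no} neighbour of $v_i$ outside $V(H')$ has degree at least $10d(v_i)$, and this includes the neighbours reached by the later half-edges $h_{j+1},\dots,h_d$. You flag this as the main obstacle, but your proposed split does not resolve it: a neighbour outside $V(H')$ cannot lie in $V(H_{i-1})\subseteq V(H')$.

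Moreover, when $D^*>\eps m$ your claim that $H'$ satisfies hypothesis (2) of the lemma is false. If $d>\sqrt m(\log m)^{-2}/10$, a newly attached vertex whose degree lies in $[\sqrt m(\log m)^{-2},10d)$ is not big. Yet it violates $d(w)<\sqrt m(\log m)^{-2}$ for $w\in V(H')$, so the lemma cannot be invoked for the later half-edges.

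The repair is to drop the sequential exposure, as the paper does. Because $G$ is simple, no half-edge of $v_i$ can be matched to a vertex added earlier in the same iteration. So ``bad with respect to $H'$'' coincides with being matched into $V(H_{i-1})$ or onto a vertex of degree at most $2$, which is the event $\cF$ of Lemma~\ref{lem:prob-bad-edge} for $H=H_{i-1}$. Also, the global event that no $h_k$ is big is contained in the lemma's $\cA$ for $H=H_{i-1}$, since $10d_i(v_i)\le 10d(v_i)$.

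Your Step~2 only uses $\E{B\mathbf 1_{\text{no big}}}=\sum_j\mathbf P\{h_j\text{ bad and no }h_k\text{ big}\}$. Each term is at most $\Cprob{\cF\cap\cA}{H_{i-1}\subseteq G}<0.001$ by a single application of the lemma to $H_{i-1}$. Its hypotheses are exactly (2) and (3) of the corollary, because $\sum_{w\in V(H_{i-1})}d(w)=2|E(H_{i-1})|+X_{i-1}$. No shrinking of $\eps$ and no verification for $H'$ is needed.

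This is the paper's proof, which uses the threshold $5d_i(v_i)$ in place of your $10d$. Either threshold works: it still forces $X_i-X_{i-1}\ge 3d$, and the no-big event still lies inside $\cA$.
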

\begin{proof}
We condition on the first $i-1$ iterations such that (1), (2), (3) hold, and the choice of $v_i$ (which are completely determined by $H_{i-1}$) and work within this space.\par
If $d_i(v_i)\geq\sqrt{m}{(\log m)^{-2}}$ then 
$X_i^*=0.9d_i(v_i)$ by the definition of $X_i^*$  so we assume this is not the case. We  need to show $\E{\min(3d_i(v_i),X_i-X_{i-1})}\geq0.9d_i(v_i)$. We let $\Bad$ be the event that $v_i$ is not adjacent to a vertex outside 
    $H_{i-1}$ of degree $5d_i(v_i)$. For each unexplored half-edge $h\in S$ where $S$ consists of the  $d_i(v_i)$ unexplored half-edges incident to $v_i$,  we let $E_h$  be the event that $h$ is adjacent to a 
    vertex of degree at most 2 or a vertex of $H_{i-1}$. We note that $d_i(v_i)-\sum_{h \in S}3(\I{E_h} )\le\min( 3d_i(v_i),X_i-X_{i-1})$ deterministically. When $\Bad$ fails, $\min( 3d_i(v_i),X_i-X_{i-1})\geq 3d_i(v_i)$. Thus, the expectation is at least
\begin{align*}
    &3d_i(v_i)(1-\p{\Bad})+
    d_i(v_i)\p{\Bad} -\sum_{h \in S} 3\p{E_h \cap \Bad}\\
    \ge\,&d_i(v_i)-\sum_{h \in S} 3\p{E_h \cap \Bad}.
\end{align*}

    Since $|S|=d_i(v_i)$, it suffices to show for each $h \in S$, $\p{E_h \cap \Bad}<0.1$; and we can see that this holds by applying the first bound of Lemma \ref{lem:prob-bad-edge}.
\end{proof}

\begin{cor}\label{cor:no-decrease-when-log}
Fix $0<\eps<10^{-6}$. Let $\cD$ be such that $m$ is large enough and $n_1,n_2  \le 10^{-6}m$. Let $v\in[n]$. Then for any $1 \le i \le m$, if $i$ is such that $d_i(v_i)>\sqrt{m}(\log m)^{-2}$ and 
    \begin{enumerate}
    \item $X_{i-1} \neq 0$,
    \item $2|E(H_{i-1})|+|X_{i-1}| \le \epsilon m$, and
    \item  $D^*\leq\eps m$,
\end{enumerate}
then $\Cprob{X_i-X_{i-1}< 0.9d_i(v_i)}{H_{i-1}}<m^{-m^{1/3}}$.
\end{cor}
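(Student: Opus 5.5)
The plan is to reveal the $d=d_i(v_i)$ open half-edges of $v_i$ one at a time, bound the conditional probability that each one is ``bad'' using the second bound of Lemma \ref{lem:prob-bad-edge}, and then apply a binomial tail estimate.

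\textbf{Reduction to counting bad half-edges.} Let $B$ be the number of half-edges of $v_i$ that, during iteration $i$, are matched to a vertex of degree at most $2$ or to a vertex of $H_{i-1}$. Then $J_i+K_i+L_i\le B$. By the deterministic inequality $X_i-X_{i-1}\ge d_i(v_i)-2J_i-K_i-3L_i$ from Section \ref{sec:outline}, we get $X_i-X_{i-1}\ge d-3B$. It therefore suffices to show
\[
\Cprob{B\ge d/30}{H_{i-1}}<m^{-m^{1/3}}.
\]

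\textbf{Sequential exposure.} Order the open half-edges of $v_i$ as $h_1,\dots,h_d$. Let $H^{(j)}$ be $H_{i-1}$ together with the edges revealed at $h_1,\dots,h_{j-1}$, including the new endpoints. I will apply Lemma \ref{lem:prob-bad-edge} to $H^{(j)}$, $v=v_i$ and $h=h_j$.
\begin{itemize}
\item Hypothesis (2) of the lemma holds because $D^*\le\epsilon m$.
\item For hypothesis (1), the vertices added in this iteration are neighbours of $v_i$, so their total degree is at most $D(v_i)\le D^*\le\epsilon m$. Combined with $2|E(H_{i-1})|+X_{i-1}\le\epsilon m$, the total degree of $V(H^{(j)})$ is at most $2\epsilon m$.
\end{itemize}
The lemma is stated for any fixed $\epsilon<10^{-6}$, so I apply it with $2\epsilon$ in place of $\epsilon$, restricting $\epsilon$ if necessary (its proof only needs the forbidden edge count $(6\epsilon+10^{-5})m$ to be small). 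The second bound of the lemma then gives that, conditional on $H^{(j)}\subseteq G(\cD)$, the probability that $h_j$ is matched into $V(H^{(j)})$ or onto a vertex of degree at most $2$ is less than $0.001$. This event contains the event that $h_j$ is bad in the sense defining $B$, since $V(H_{i-1})\subseteq V(H^{(j)})$. Hence, given any history of the first $j-1$ revelations, $h_j$ is bad with probability below $0.001$.

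\textbf{Binomial tail.} It follows that $B$ is stochastically dominated by $\mathrm{Bin}(d,0.001)$, for example by coupling each step with an independent uniform variable. The standard estimate then gives
\[
\p{B\ge d/30}\le\binom{d}{d/30}(0.001)^{d/30}\le(30e\cdot 0.001)^{d/30}=e^{-\Omega(d)}.
\]
Since $d>\sqrt m(\log m)^{-2}$, this is $\exp(-\Omega(\sqrt m/\log^2 m))$, which is far smaller than $m^{-m^{1/3}}=\exp(-m^{1/3}\log m)$ for large $m$.

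\textbf{Main obstacle.} The delicate step is checking that Lemma \ref{lem:prob-bad-edge} stays applicable after partially exposing $v_i$'s edges: the enlarged $H^{(j)}$ must still have total degree $O(\epsilon m)$. This is exactly where $D^*\le\epsilon m$ is used, and why the corollary needs condition (3) in this stronger form rather than only the bounded-degree alternative.
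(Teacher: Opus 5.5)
Your proposal is correct and is essentially the paper's proof: you expose $v_i$'s open half-edges one at a time, bound each conditional bad probability by $0.001$ via Lemma \ref{lem:prob-bad-edge}, and finish with a binomial tail (the paper uses the threshold $0.01d_i(v_i)$ where you use $d/30$). You are more careful than the paper in checking, via $D(v_i)\le D^*\le\epsilon m$, that the partially exposed graph still satisfies hypothesis (1) of the lemma; the only blemish is that $J_i+K_i+L_i\le B$ can fail when a degree-$2$ vertex of $H_{i-1}$ is hit (it is then counted in both $K_i$ and $L_i$), but $X_i-X_{i-1}\ge d-3B$ still holds by direct per-half-edge accounting.
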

\begin{proof}
Let $H'_0=H_i$. For $1\leq j\leq d_i(v_i)$, let $h_j$ be the lowest index half-edge on $v_i$ out of $H'_{j-1}$, and form $H'_j$ from $H'_{j-1}$ by adding the edge containing $h_j$. Let $\cF_j$ denote the event that $h_j$ is matched onto $V(H_{j-1}')$ or onto a vertex of degree at most $2$. By Lemma~\ref{lem:prob-bad-edge}, if (1) (2) (3) hold then $\Cprob{\cF_j}{H_{j-1}'}<0.001$. Let $\cB_i$ denote the event that at least $0.01d_i(v_i)$ events among $\cF_1,\dots,\cF_{d_i(v_i)}$ occur. Then
\begin{align*}
    \Cprob{X_i-X_{i-1}< 0.9d_i(v_i)}{H_{i-1}}\leq\Cprob{\cB_i}{H_{i-1}}&<\binom{d_i(v_i)}{0.01d_i(v_i)}0.001^{0.01d_i(v_i)}\\
    &<\left(\frac{0.001e}{0.01}\right)^{0.01d_i(v_i)}<m^{-m^{1/3}},
\end{align*}
the final bound holding when $d_i(v_i)>\sqrt{m}(\log{m})^{-2}>100m^{1/3}\log_{10/e}{m}$.
\end{proof}
We are now ready to combine our lemmas to prove if $v$ is in a linear sized component then $X_i$ will reach $\Omega(m)$ or the exploration will reach a high degree vertex.
\begin{cor}
\label{69}
    Fix $0<\eps<10^{-6}$ and let $\cD$ be such that $m$ is large enough and $n_1,n_2 \le 10^{-6} m$.
    Let $v\in[n]$ be fixed. Then the probability that there exists an $i$  with  $1\leq i \leq m$  such that  $2|E(H_{i-1})|+|X_{i-1}| \leq \eps m$, and such that neither
\begin{enumerate}[(a)]
    \item $X_i \ge 0.9\sum_{j=1}^i d_j(v_j)-\frac{m}{\log m}$, nor
    \item  $D^*\ge \epsilon m$ and 
    $H_i$ contains a vertex of degree at least $\sqrt{m}(\log m)^{-2}$
\end{enumerate}
holds, is at most $e^{-m^{1/11}}$. 
\end{cor}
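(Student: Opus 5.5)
Our plan is to decompose $X_i$ into the martingale $Y_i$, the compensator $\sum_{j\le i}\Cexp{X_j^*}{H_{j-1}}$, and an error term measuring how much $X_j^*$ overstates the true step $X_j-X_{j-1}$. Let $\tau$ be the first index $i$ at which the conclusion fails with $2|E(H_{i-1})|+|X_{i-1}|\le\eps m$. Also stop if (b) already holds: once $D^*\ge\eps m$ and a vertex of degree at least $\sqrt m(\log m)^{-2}$ has entered $H$, nothing further needs proving.

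\textbf{Step 1: the compensator.} For every $j\le i$ before stopping, $2|E(H_{j-1})|+|X_{j-1}|\le \eps m$, because this quantity is nondecreasing in $j$. Either $D^*\le\eps m$, or no vertex of $H_{j-1}$ has degree at least $\sqrt m(\log m)^{-2}$, since otherwise (b) would hold. If $X_{j-1}=0$ then $X_j^*=0=d_j(v_j)$. Otherwise Corollary \ref{lem:exp-x-star} applies and gives $\Cexp{X_j^*}{H_{j-1}}\ge 0.9d_j(v_j)$. Summing,
\[
Y_i \le \sum_{j\le i}X_j^* - 0.9\sum_{j\le i}d_j(v_j).
\]

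\textbf{Step 2: the martingale.} By Lemma \ref{lem:martingale} and a union bound over the $m$ values of $i$, with probability at least $1-me^{-m^{1/10}}$ we have $Y_i>-m/\log m$ for all $i$.

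\textbf{Step 3: relating $X^*$ to the true increments.} This is the main obstacle. We need $\sum_{j\le i}X_j^*\le X_i$ up to a negligible error.
\begin{itemize}
\item When $d_j(v_j)<\sqrt m(\log m)^{-2}$, we have $X_j^*=\min(3d_j(v_j),X_j-X_{j-1})\le X_j-X_{j-1}$ deterministically.
\item When $d_j(v_j)\ge\sqrt m(\log m)^{-2}$, we have $X_j^*=0.9d_j(v_j)$, and we need $X_j-X_{j-1}\ge 0.9d_j(v_j)$.
\begin{itemize}
\item If $D^*\le\eps m$, Corollary \ref{cor:no-decrease-when-log} shows this fails with probability at most $m^{-m^{1/3}}$ per step. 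A union bound over $m$ steps handles it.
\item If $D^*>\eps m$, then $v_j\in H_{j-1}$ has degree at least $d_j(v_j)\ge\sqrt m(\log m)^{-2}$, so (b) already holds at time $i$ and this case is excluded.
\end{itemize}
\end{itemize}
On the good event we therefore get $X_i=d(v)+\sum_{j\le i}(X_j-X_{j-1})\ge\sum_{j\le i}X_j^*$.

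\textbf{Conclusion.} On the intersection of the good events from Steps 2 and 3,
\[
X_i\ge \sum_{j\le i} X_j^* \ge Y_i+0.9\sum_{j\le i}d_j(v_j)\ge 0.9\sum_{j\le i}d_j(v_j)-\frac{m}{\log m},
\]
which is (a). The total failure probability is at most $me^{-m^{1/10}}+m\cdot m^{-m^{1/3}}<e^{-m^{1/11}}$ for large $m$.

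One bookkeeping point needs care: the per-step hypotheses must hold at every $j\le i$, not only at $i$. We secure this by monotonicity of $2|E(H)|+X$, and by noting that condition (3) for earlier $j$ is implied by (b) failing at $i$, since $H_j\subseteq H_i$.
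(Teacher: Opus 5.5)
Your proposal is correct and follows the paper's proof essentially verbatim. Like the paper, you write $X_i\ge\sum_{j\le i}X_j^*=Y_i+\sum_{j\le i}\Cexp{X_j^*}{H_{j-1}}$, then bound the compensator with Corollary \ref{lem:exp-x-star}, the martingale with Lemma \ref{lem:martingale}, and the event $X_j^*>X_j-X_{j-1}$ with Corollary \ref{cor:no-decrease-when-log}. Your version is slightly more careful than the paper's in three places: you make the union bound over $i$ explicit, you verify the hypotheses at every $j\le i$ via monotonicity of $2|E(H_j)|+X_j$, and you note that a step with $d_j(v_j)\ge\sqrt m(\log m)^{-2}$ when $D^*>\eps m$ already forces (b), a case the paper leaves implicit when it cites Corollary \ref{cor:no-decrease-when-log}, which requires $D^*\le\eps m$.
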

\begin{proof}

Unless there is some $j  \le i$ such that $X^*_j>X_j-X_{j-1}$ then by Corollary \ref{lem:exp-x-star} we have
\begin{equation*}
    X_i \ge\sum_{j=1}^i X^*_j =Y_i+\sum_{j=1}^i\E{X^*_j | H_{j-1}} \ge Y_i+0.9 \sum_{j=1}^i d_j(v_j).
\end{equation*}
So, by our observation,  the probability we are bounding is at most
\begin{equation*}
    \p{Y_i<\frac{- m}{\log m}}+ \p{\exists 1 \le j \le i:X^*_j>X_j-X_{j-1}}.
\end{equation*}
Lemma \ref{lem:martingale}  implies $\p{Y_i<\frac{- m}{\log m}}<e^{-m^{1/10}}$, and Corollary \ref{cor:no-decrease-when-log}  and the definition of $X^*_j$ together imply  $\p{\exists 1 \le j \le i:X_j^*>X_j-X_{j-1}} \le m( m^{-m^{1/3}})$.
\end{proof}

\begin{cor}
\label{goodtogo}
    The probability that there is a vertex $v$ in a component with at least $\epsilon m$ edges for which both
\begin{enumerate}[(a)]
    \item $X_i$ 
    is never more than $\frac{\epsilon m}{8}$, and 
\item $D^*<\epsilon m$ or  the component contains no vertex of degree greater than $\sqrt{m}(\log m)^2$
\end{enumerate}
hold, is at most $me^{-m^{1/11}}$. 
\end{cor}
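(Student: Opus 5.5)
Fix a vertex $v$ and apply Corollary \ref{69}, then take a union bound over the at most $m$ choices of $v$; this is where the factor $m$ in $me^{-m^{1/11}}$ comes from. So it suffices to show that, for a fixed $v$, the event in the statement is contained, up to the event of probability $e^{-m^{1/11}}$ excluded by Corollary \ref{69}, in a contradiction. Suppose then that $v$ lies in a component with at least $\epsilon m$ edges, that (a) and (b) hold, and that for every $i$ with $2|E(H_{i-1})|+X_{i-1}\le\epsilon m$ one of the alternatives (a) or (b) of Corollary \ref{69} holds at $i$.

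\textbf{Ruling out the high-degree alternative.} Alternative (b) of Corollary \ref{69} requires $D^*\ge\epsilon m$ and a vertex of degree at least $\sqrt m(\log m)^{-2}$ in $H_i$. This has to be excluded by hypothesis (b) of the present statement. As literally written, hypothesis (b) uses the threshold $\sqrt m(\log m)^{2}$, which looks like a typo for $(\log m)^{-2}$; I will read it as $(\log m)^{-2}$, since that is the threshold used throughout this section. With that reading, whenever $D^*\ge\epsilon m$ the component has no such vertex. Hence alternative (a) of Corollary \ref{69} must hold for every relevant $i$:
\[
X_i\ge 0.9\sum_{j\le i}d_j(v_j)-\frac{m}{\log m}.
\]

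\textbf{Deriving the contradiction.} Let $i^*$ be the first index with $2|E(H_{i^*})|+X_{i^*}>\epsilon m$. This index exists: since the component has at least $\epsilon m$ edges, the exploration does not reach $X=0$ before exposing all of them, so $2|E(H_i)|$ eventually exceeds $\epsilon m$. At iteration $i^*$ the precondition $2|E(H_{i^*-1})|+X_{i^*-1}\le\epsilon m$ holds, so the bound above applies at $i=i^*$. Each half-edge exposed at step $j$ contributes to $|E(H_j)|$, and every edge of $H$ is counted at most twice among the $d_j(v_j)$. Therefore
\[
\sum_{j\le i^*}d_j(v_j)\ge |E(H_{i^*})|.
\]
Two cases remain.
\begin{itemize}
\item If $X_{i^*}>\epsilon m/8$, hypothesis (a) is already violated.
\item Otherwise $2|E(H_{i^*})|>7\epsilon m/8$. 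This gives $X_{i^*}\ge 0.9\cdot 7\epsilon m/16-m/\log m>\epsilon m/8$ for large $m$, again contradicting (a).
\end{itemize}

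\textbf{Main obstacle.} The delicate step is the bookkeeping at the stopping index $i^*$. One must check that the precondition of Corollary \ref{69} is evaluated at $i^*-1$ while the conclusion is used at $i^*$. One must also check that a single step cannot overshoot in a way that breaks the bound. For low-degree $v_{i^*}$ this is harmless, because the step size is tiny compared with $\epsilon m$. For a high-degree $v_{i^*}$, the sum $\sum_j d_j(v_j)$ includes $d_{i^*}(v_{i^*})$ in full, so the inequality only gets stronger. Finally, the hypotheses of Corollary \ref{69} ($n_1,n_2\le10^{-6}m$, $\epsilon<10^{-6}$, $m$ large) are assumed to be inherited from the surrounding context.
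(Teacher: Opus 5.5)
Your proof is correct and is essentially the paper's argument: apply Corollary \ref{69} at a stopping index, use $|E(H_i)|\le\sum_{j\le i}d_j(v_j)$ to force $X_i>\epsilon m/8$, and union bound over $v$, with the threshold in (b) read as $\sqrt m(\log m)^{-2}$, which is indeed the intended reading. The only difference is the stopping index: the paper stops at the first $i$ with $\sum_{j\le i}d_j(v_j)\ge\epsilon m/4$ (so either $X_{i-1}\ge\epsilon m/2$ already violates (a), or the precondition of Corollary \ref{69} holds), whereas you stop at the first $i$ with $2|E(H_i)|+X_i>\epsilon m$ and split cases; also note that $\sum_{j\le i}d_j(v_j)\ge|E(H_i)|$ holds because every edge of $H_i$ is created by exposing an open half-edge of some $v_j$ with $j\le i$ (in fact with equality), not because edges are ``counted at most twice,'' which gives the opposite inequality.
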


\begin{proof}
   Note that  $|E(H_{i-1})|$ is at most $\sum_{j=1}^{i-1} d_j(v_j)$ for all $i\geq1$.
    We consider the first $i$ for which $\sum_{j=1}^i d_j(v_j) \ge \frac{\epsilon m}{4}$.
     Observe that if   $2|E(H_{i-1})|+X_{i-1} \ge  \epsilon m$, (a) fails. 
    Applying Corollary \ref{69} we are done. 
\end{proof}

We are now in a position to prove the main result of this section, Lemma \ref{theyjoinmain}. 

\begin{proof}[Proof of Lemma \ref{theyjoinmain}]
First suppose $D^*<\eps m$. By Corollary \ref{goodtogo}, with probability at least $1-me^{-m^{1/11}}$, every vertex in a component with at least $\eps m$ edges has its $X_i$ reach $\frac{\eps m}{8}$, so by Lemma \ref{theyjoin} these vertices are also all in the same component with probability at least $1-m^{-7\log\log{m}+2}$.\par
Now suppose $D^*\geq\eps m$ but $d(n)\geq\sqrt{m}(\log{m})^6$. By Corollary \ref{sec6cor1}, all vertices of degree at least $\sqrt{m}(\log{m})^{-2}$ are in the same component with probability at least $1-m^{-8\log\log{m}}$. If $D^*\geq\eps m$ but $d(n)<\sqrt{m}(\log{m})^6$, then by Lemma \ref{bigguyssticktogether}, again all vertices of degree at least $\sqrt{m}(\log{m})^{-2}$ are in the same component with probability at least $1-m^{-8\log\log{m}}$. By Corollary \ref{goodtogo}, with probability at least $1-me^{-m^{1/11}}$, every vertex in a component with $\epsilon m$ edges is either in a component with a vertex of degree greater than $\sqrt{m}(\log{m})^{-2}$, or $X_i$ reaches $\frac{\eps m}{8}$. Finally, by Lemma \ref{theyjoin}, the probability that there exists a vertex whose exploration reaches $\frac{\eps m}{8}$ in a different component from any vertex of degree $\sqrt{m}(\log{m})^{-2}$  or from another vertex whose exploration also reaches $\frac{\eps m}{8}$  is at most $m^{-7\log\log{m}+2}$.
\end{proof}

\section{Analyzing  Iteration by Iteration}
\label{seciteration}
In this section we carry out our second angle of analysis  of the exploration $(X_i)_{i\geq0}$ out of an arbitrary fixed vertex $v\in[n]$, as defined in Section \ref{sec:outline}. We consider 
each iteration in turn and
focusing on the  the random variable $X_i-X_{i-1}$ especially on its behaviour when it is negative. 
The results we obtain will be used
both to complete the proof of Theorem \ref{thekeyresult}, which is done in the next   section, 
and to bound the probability that there is a component of size at most $4(\log m)^4$ which we do in the  section after that.

We recall that $X_i-X_{i-1}$ is at least $d_i(v_i)-2J_i-K_i-3L_i$. Thus our focus is on the behaviour of $J_i$, $K_i$, and $L_i$. 
We handle $L_i$ separately from $J_i$ and $K_i$. 
Given an iteration $i\geq1$ and $j,k\geq0$ integers, define $\Bad_{j,k}^{(i)}$ to be the event that $X_i \le X_{i-1}$, $J_i\geq j$,  and $K_i\geq k$. 

We note that if $X_i \le X_{i-1}$ then  $J_i+K_i+L_i\geq\frac{d_i(v_i)}{3}$, which implies either $L_i \ge \frac{d_i(v_i)}{6}$ or $\Bad_{j,k}^{(i)}$ occurs for some nonnegative $j,k$ which sum to at least $\frac{d_i(v_i)}{6}$. We will now bound the probability of $\Bad^{(i)}_{j,k}$ conditioned on $H_{i-1}$.

\begin{lem}\label{lem:boundary-grows2}
    Let $0<\eps<10^{-6}$ and assume $\cD$ is such that $m$ is large enough and that $n_1,n_2<\epsilon m$. Suppose that for some  $i\geq1$, $|E(H_{i-1})| \le \frac{m}{10^{5}}$ and   $X_{i-1}<10^{-4}m$.  
    Then for any nonnegative $j,k$ with $j+k \le \frac{\sqrt{m}}{1000}$, the probability, conditioning on $H_{i-1}$, of the event $\Bad_{j,k}^{(i)}$ is at most
\begin{equation}\label{eq:jk}
\frac{jm}{jm-2ed_i(v_i)n_1}\left(\frac{2ed_i(v_i)n_1}{jm}\right)^{j}\frac{km}{km-10ed_i(v_i)n_2}\left(\frac{10ed_i(v_i)n_2}{km}\right)^{k}
\end{equation}
if $j,k>0$, $2ed_i(v_i)n_1<jm$, and $10ed_i(v_i)n_2<km$; at most
\begin{equation}\label{eq:k}
  \frac{km}{km-10ed_i(v_i)n_2}\left(\frac{10ed_i(v_i)n_2}{km}\right)^{k}
\end{equation}
if $j=0$ and $10ed_i(v_i)n_2<km$; and at most
\begin{equation}\label{eq:j}
  \frac{jm}{jm-2ed_i(v_i)n_1}\left(\frac{2ed_i(v_i)n_1}{jm}\right)^{j}
\end{equation}
if $k=0$ and $2ed_i(v_i)n_1<jm$.
Furthermore, for any $\ell\geq1$, the probability, conditioning on $H_{i-1}$, of the event $\Bad_{j,k}^{(i)}\cap\{L_i\geq\ell\}$ is at most 
\begin{equation}\label{eq:l}
p^{(i)}_{j,k}\left(\frac{2d_i(v_i)X_{i-1}}{\ell m}\right),
\end{equation}
where $p^{(i)}_{j,k}$ (depends on $j$, $k$, and $H_{i-1}$ via $d_i(v_i)$) is equal to the minimum of $1$ and \eqref{eq:jk}, \eqref{eq:k}, or \eqref{eq:j} if their respective assumptions are satisfied.
\end{lem}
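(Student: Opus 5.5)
We would prove the bounds by a switching argument on the $d=d_i(v_i)$ open half-edges of $v_i$, conditioning on $H_{i-1}$ throughout. Fix $j\ge 1$. The plan is to bound, for each $t\ge j$, the conditional probability of the event $\cF_t$ that exactly $t$ of these half-edges are matched to degree-1 vertices. The target is a ratio bound
\[
\frac{\p{\cF_t}}{\p{\cF_{t-1}}}\le \frac{2dn_1}{tm}.
\]
Iterating it gives $\p{\cF_t}\le \binom{d}{t}\ldots$, which has the shape $\frac{1}{t!}\left(\frac{2dn_1}{m}\right)^t\le \left(\frac{2edn_1}{tm}\right)^t$. Summing the geometric tail over $t\ge j$ produces the factor $\frac{jm}{jm-2edn_1}$, which is exactly \eqref{eq:j}. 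The degree-2 bound \eqref{eq:k} is obtained the same way with constant $10$ in place of $2$. For \eqref{eq:jk}, we would first fix the set of half-edges going to degree-1 vertices, run the degree-2 switching only on the remaining half-edges, and multiply the two bounds.

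The switching from $\cF_t$ to $\cF_{t-1}$ takes an edge $v_ia$ with $d(a)=1$, where $v_ia$ uses an open half-edge of $v_i$, together with an edge $bc$ not in $H_{i-1}$. It replaces them with $v_ic$ and $ab$, where $c$ has degree at least 3 and $c\notin V(H_{i-1})\cup N(v_i)$. This choice ensures that $J_i$ drops by exactly one, while $K_i$ and $L_i$ do not increase and $X_i\le X_{i-1}$ is preserved. Preserving that last condition is delicate, so we may instead bound the weaker event with $J_i\ge j$, dropping the condition $X_i\le X_{i-1}$.

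Counting switchings in the forward direction from $\cF_t$: there are $t$ choices of $v_ia$. The edges $bc$ must avoid several excluded sets:
\begin{itemize}
\item edges of $H_{i-1}$ and edges at open half-edges, at most $10^{-5}m+10^{-4}m$;
\item edges at vertices of degree at most 2, at most $3\eps m$;
\item edges near $N(v_i)$ and $N(a)$.
\end{itemize}
This leaves roughly $m/2$ choices, or $m$ after counting orientations.

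The reverse count from $\cF_{t-1}$ is at most $(d-t+1)\cdot n_1$. We pick one of the non-degree-1 open half-edges at $v_i$ and a degree-1 vertex $a$ with its neighbour $b$. The ratio is therefore $\le \frac{(d-t+1)n_1}{t\,m/2}$, matching the claimed ratio. For degree-2 vertices the reverse count is $2n_2$, since a degree-2 vertex has two edges to choose from. The extra slack in the constant $10$ absorbs the losses from the excluded edges.

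The main obstacle is the neighbourhood exclusion. Without the hypothesis $D^*\le\eps m$, the number of edges incident to $N(v_i)$ can be large. The fix is that only $d$ neighbours matter: we require $c$ to have no neighbour in $N(v_i)$ for the new edge $v_ic$ to be valid, and avoiding a parallel edge only requires $c\notin N(v_i)$. The requirement $j+k\le\sqrt m/1000$ keeps the few forbidden endpoints negligible. If this fails, we would restrict to switchings for which $c$ is a high-degree-safe vertex, mirroring Lemma~\ref{lem:prob-bad-edge}.

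For \eqref{eq:l}, on top of $\Bad_{j,k}^{(i)}$ we apply one more switching family, moving back-edges from $v_i$ into $V(H_{i-1})$ to fresh edges. The reverse count is at most $X_{i-1}$ open half-edges of $H_{i-1}$ times the $d$ choices at $v_i$, and the forward count is at least $\ell m/2$. This gives the additional factor $\frac{2dX_{i-1}}{\ell m}$, with the degree-1 and degree-2 counts left unchanged.
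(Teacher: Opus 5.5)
\textbf{There is a genuine gap: the forward switching count fails because you drop the condition $X_i\le X_{i-1}$.} This is not a harmless weakening, because the bound \eqref{eq:j} is false for the event $\{J_i\ge j\}$.

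Take one vertex $v$ of degree $s+n_1$, $s$ vertices of degree $s$, and $n_1\approx m^{1/4}$ vertices of degree $1$. Since $v$ must be adjacent to all other $s+n_1$ vertices, the only realization is $K_{s+1}$ on $v$ and the degree-$s$ vertices, plus $n_1$ pendant edges at $v$. So $m\approx s^2/2$ and $n_2=0$. Exploring from $v$ gives $H_0=\{v\}$ and $X_0=s+n_1=O(\sqrt m)$, so all hypotheses hold, and $J_1=n_1$ surely. Thus $\p{J_1\ge 1}=1$, while \eqref{eq:j} with $j=1$ is $O(sn_1/m)=O(m^{-1/4})$. The lemma itself survives only because $X_1=s(s-1)>X_0$, so $\Bad^{(1)}_{1,0}$ never occurs.

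In this graph there is no forward switching at all, since every vertex other than $v$ lies in $N(v)$. The constraint $c\notin N(v_i)$ does not cost you ``a few forbidden endpoints''. It costs every edge neither of whose endpoints may serve as $c$, including all edges spanned by the new neighbours of $v_i$. Without $X_i\le X_{i-1}$, those edges can be essentially all of $G$. Your argument does go through when $d_i(v_i)\le\sqrt m/10$, since the new neighbours then span at most $m/200$ edges, but the lemma allows $d_i(v_i)$ up to $10^{-4}m$. Two smaller errors: your claim that the switch preserves $X_i\le X_{i-1}$ is false, since $c$ has degree at least $3$. And $j+k\le\sqrt m/1000$ has nothing to do with the number of forbidden endpoints, which is governed by $d_i(v_i)$.

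What $X_i\le X_{i-1}$ actually supplies is that the neighbours of $v_i$ outside $V(H_{i-1})$ have total degree at most $2d_i(v_i)$. Hence only $O(d_i(v_i))$ edges are lost to the neighbourhood constraint. A switching destroys this property, so the paper carries a relaxed version of it along the chain. For $J_i=j'$, $K_i=k'$, it uses events $\cA_{p,q}$ with $J_i=j'-p$ and $K_i=k'-q$, in which the new neighbours of $v_i$ have total degree at most $2d_i(v_i)+20(p+q)\sqrt m$; thus $\cF_{j',k'}\subseteq\cA_{0,0}$. It then switches only with edges having an endpoint of degree at most $10\sqrt m$, oriented so that this endpoint becomes the new neighbour of $v_i$. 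At most $m/50$ edges join two vertices of degree exceeding $10\sqrt m$, and the relaxed degree bound keeps the edges at new neighbours below $m/25$. So at least $7m/8$ switch edges always remain, and each switching from $\cA_{p,q}$ lands in $\cA_{p+1,q}$ (or $\cA_{p,q+1}$). The hypothesis $j+k\le\sqrt m/1000$ is exactly what keeps the accumulated $20(p+q)\sqrt m$ below $m/50$. Your $L_i$ step needs the same bookkeeping.
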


\begin{proof}
Fix $i\geq1$ where $|E(H_{i-1})|\leq\frac{m}{10}$ and $X_{i-1}<10^{-4}m$ and denote $\Bad^{(i)}_{j,k}$. For every $j',k'\geq0$ we define ${\cal F}_{j',k'}$ to be the event that $X_i \le X_{i-1}<10^{-4}m$, $E(H_{i-1})\leq\frac{m}{10}$, $J_i=j'$, $K_i=k'$.\par
For every $(j',k')$ with $j'\ge j, k'\ge k$, for each $(p,q)$ with $0 \le p \le j',0\le  q \le k'$, and $p+q\le \frac{\sqrt{m}}{1000}$, we let $\cA_{p,q}={\cal A}_{p,q}^{(j',k')}$ be the event that $V(H_i)\setminus V(H_{i-1})$ (i.e.~the neighbours of $v_i$ added to $H$ during iteration $i$) has total degree at most $20(p+q)\sqrt{m}+2d_i(v_i)$, that $J_i=j'-p$, and that $K_i=k'-q$. We note  that $X_i\leq X_{i-1}$ implies $V(H_i)\setminus V(H_{i-1})$ has total degree at most $2d_i(v_i)$, so ${\cal F}_{j',k'} \subseteq\cA_{0,0}^{(j',k')}$.

For every graph in $\cA_{p,q}$, we say an edge is a \emph{valid switch edge} if the following hold:
it has  one endpoint of degree  at most $10\sqrt{m}$, and  both of its endpoints are not neighbours of $v_i$, have degree at least three, and are not in $V(H_{i-1})$. For a graph for which $\cA_{p,q}$ occurs, there are at most:
\begin{itemize}
    \item $\frac{m}{50}$ edges both of whose  endpoints have degree exceeding  $10\sqrt{m}$;
    \item $20(p+q)\sqrt{m}+2d_i(v_i) \le \frac{m}{25}$  edges out of neighbours of $v_i$ outside of $V(H_{i-1})$;
    \item $n_1+2n_2 \le 10^{-5}m$ edges incident to vertices of degree less than 3; and
     \item $\frac{m}{10^5} +10^{-4}m <\frac{m}{500}$ edges  incident to $V(H_{i-1})$.
\end{itemize}
Thus there are at least $\frac{7m}{8}$ valid switch edges in any such $G$. Also note that
\begin{equation*}
    2m=n_1+2n_2+3(n-n_1-n_2)=3n-2n_1-n_2>3n-3\eps m\implies n<\frac{(2+3\eps)m}{3}.
\end{equation*}
\par
For every $(j',k'),(p,q)$ as specified, with $0 \le p \le j'-1 $, we consider switchings from $\cA_{p,q}$ to $\cA_{p+1,q}$ using switchings swapping $\{v_iu,ab\}$ with $\{v_ib,au\}$, where $d(u)=1$ and $ab$ is a valid switch edge, oriented so that   For any $G\in\cA_{p,q}$, there are at least $(j'-p)\frac{7m}{8}$ such switchings. Now, there are no more than $d_i(v_i)n_1$ choices of $\{v_ib,au\}$ in $\cA_{p+1,q}$. Hence
\begin{equation*}
    \frac{|\cA_{p,q}|}{|\cA_{p+1,q}|}\leq\frac{2d_i(v_i)n_1}{(j'-p)m}.
\end{equation*}
So $\p{\cA_{0,q}}\leq\frac{1}{j'!}(\frac{2d_i(v_i)n_1}{m})^{j'}\leq(\frac{2ed_i(v_i)}{j'm})^{j'}$. Very similarly, for every $(j',k'),(p,q)$ as specified, with $0\leq q\leq k'-1$, we consider switchings from $\cA_{p,q}$ to $\cA_{p,q+1}$ using switchings swapping $\{v_iu,ab\}$ with $\{v_ib,au\}$, where $d(u)=2$ and $ab$ is a valid switch edge such that neither of $a,b$ is the other neighbour of $u$, oriented so that $d(b)\leq 10\sqrt{m}$. Since less than $n<\frac{(2+3\eps)m}{3}$ edges are incident to the other neighbour of $u$, we get at least $(k'-q)(\frac{7m}{8}-n)>(k'-q)\frac{m}{5}$ switchings from $\cA_{p,q}$ and at most $d_i(v_i)2n_2$ switchings from $\cA_{p,q+1}$. Thus 
\begin{equation*}
    \frac{|\cA_{p,q}|}{|\cA_{p,q+1}|}\leq\frac{10d_i(v_i)n_2}{(k'-q)m}.
\end{equation*}
So $\p{\cA_{p,0}}\leq\frac{1}{k'!}(\frac{10d_i(v_i)n_1}{m})^{k'}\leq(\frac{10ed_i(v_i)}{k'm})^{k'}$. Thus, all conditioning on $H_i$, 
\begin{equation*}
\p{\cF_{j',k'}}\leq\p{\cA_{0,0}^{(j',k')}}\leq\begin{cases}\left(\frac{2ed_i(v_i)n_1}{j'm}\right)^{j'}\left(\frac{10ed_i(v_i)n_2}{k'm}\right)^{k'},&j',k'>0;\\
\left(\frac{10ed_i(v_i)n_2}{k'm}\right)^{k'},&j'=0;\\
\left(\frac{2ed_i(v_i)n_1}{j'm}\right)^{j'},&k'=0.
\end{cases}
\end{equation*}
Union bounding over all $j'\geq j$ then over all $k'\geq k$, we get (when the fractions in parentheses are strictly less than one, as in the lemma hypotheses)
\begin{equation*}
\p{\Bad^{(i)}_{j,k}}\leq
\begin{cases}\frac{jm}{jm-2ed_i(v_i)n_1}\left(\frac{2ed_i(v_i)n_1}{jm}\right)^{j}\frac{km}{km-10ed_i(v_i)n_2}\left(\frac{10ed_i(v_i)n_2}{km}\right)^{k},&j,k>0;\\
\frac{km}{km-10ed_i(v_i)n_2}\left(\frac{10ed_i(v_i)n_2}{km}\right)^{k},&j=0;\\
\frac{jm}{jm-2ed_i(v_i)n_1}\left(\frac{2ed_i(v_i)n_1}{jm}\right)^{j},&k=0.
\end{cases}
\end{equation*}
Finally, for any $\ell\geq1$, we consider swaps from graphs in $\cA_{j',k'}\cap\{L_i=\ell\}$ into graphs in $\{L_i=\ell-1\}$, conditioning on $H_i$. We can use any pair consisting of
 one of the $\ell$ direct back edges and one of the at least $\frac{m}{2}$ valid switch edges to switch from $\Bad^{(i)}_{j,k}\cap\{L_i=\ell\}$, and have at most $d_i(v_i)X_i$ switches from $\{L_i=\ell-1\}$. Thus if $L_i\geq1$ then we get an additional factor of $\frac{2d_i(v_i)X_i}{\ell m}$, as claimed.
\end{proof}

\begin{cor}
\label{iterationanalysiscor}
     For all $0<\gamma<1$ and $0<\epsilon<10^{-8}$,  if $m$ is large enough and $n_1 \le m^{1-\gamma}$ while $n_2\leq\eps m$ and $2|E(H_i)|+X_i \le \epsilon m$, then
\begin{equation*}
    \Cprob{X_i<X_{i-1}-12\sqrt{X_{i-1}}}{H_{i-1}} \le  \left(\frac{d_i(v_i) n_1}{10\sqrt{X_{i-1}}m}\right)^{10\sqrt{X_{i-1}}}.
\end{equation*}
\end{cor}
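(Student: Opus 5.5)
We derive this from Lemma \ref{lem:boundary-grows2} by sorting out which of the defects $J_i,K_i,L_i$ can produce a drop of more than $12\sqrt{X_{i-1}}$. Write $s=\sqrt{X_{i-1}}$ and $d=d_i(v_i)$. Recall $X_i-X_{i-1}\ge d-2J_i-K_i-3L_i$ and $L_i\le s$ deterministically, by the choice of $v_i$ as the vertex with fewest open half-edges (Figure \ref{fig:Li}). Hence if $X_i<X_{i-1}-12s$, then $2J_i+K_i>d+12s-3L_i\ge d+9s$. Since $J_i+K_i+L_i\le d$, we have $K_i\le d-J_i$, so $2J_i+K_i\le d+J_i$. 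Therefore $J_i>9s$, and in particular $J_i\ge 10s$ up to rounding; with a little slack in the constants one can directly take $j=\lceil 10s\rceil$. The event also forces $X_i\le X_{i-1}$, so it is contained in $\Bad^{(i)}_{j,0}$ with $j=\lceil 10 s\rceil$.

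Next we check the hypotheses of Lemma \ref{lem:boundary-grows2}. The assumption $2|E(H_i)|+X_i\le\epsilon m$ gives $|E(H_{i-1})|\le m/10^5$ and $X_{i-1}\le\epsilon m+2d<10^{-4}m$; here $d$ is small because $d\le X_{i-1}$ and the edges of iteration $i$ lie in $H_i$. Also $n_1,n_2<\epsilon m$. For the constraint $j\le\sqrt m/1000$ we have $s\le\sqrt{\epsilon m}$, so $10s\le 10^{-3}\sqrt m$ because $\epsilon<10^{-8}$. This is why the corollary requires the stronger bound on $\epsilon$.

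We then apply the bound \eqref{eq:j} with $k=0$, which gives
$$\frac{jm}{jm-2edn_1}\Bigl(\frac{2edn_1}{jm}\Bigr)^{j}.$$
If $2edn_1\ge jm/2$, the claimed right-hand side $\bigl(dn_1/(10sm)\bigr)^{10s}$ is at least $(1/(4e))^{10s}$. In that case we need a separate argument: either note the bound is not tight, or observe that $J_i\le n_1$ caps things. This is the step I expect to be awkward. The constant $2e$ in \eqref{eq:j} versus $1$ in the target suggests the intended route is to use the raw estimate $\frac{1}{j'!}\bigl(\frac{2dn_1}{m}\bigr)^{j'}$ from inside the proof of Lemma \ref{lem:boundary-grows2}, summed over $j'\ge j$. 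Alternatively, exploit $J_i>9s$ together with the slack $20s$ versus $10s$: the event forces $J_i\ge j=\lceil 18s\rceil$-ish, since $2J_i\ge d+9s+(d-J_i)\cdot 0$ refined. Taking a larger $j$ lets $(2e)^{j}$ be absorbed via $j^{-j}$ against $(10s)^{-10s}$.

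Concretely, the plan is as follows. First, show the event implies $J_i\ge j^*$ with $j^*$ substantially larger than $10s$. Second, use $n_1\le m^{1-\gamma}$ and $d\le X_{i-1}\le\epsilon m$ to control the geometric prefactor. Third, compare $\bigl(2edn_1/(j^*m)\bigr)^{j^*}$ with $\bigl(dn_1/(10sm)\bigr)^{10s}$, handling separately the regime where $dn_1/(10sm)\ge 1$, in which the claim is trivial. Getting the constants in this final comparison to line up is the main obstacle.
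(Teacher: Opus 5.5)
Your reduction is the paper's: show the event forces many degree-one neighbours, then apply \eqref{eq:j}. But the first step is carried out incorrectly. Bounding $K_i\le d-J_i$ discards $L_i$ and gives only $J_i>9s$, and $J_i\ge 10s$ does not follow from that ``up to rounding''. Use $J_i+K_i+L_i\le d_i(v_i)$ instead, as the paper does. Then $X_i-X_{i-1}\ge d_i(v_i)-2J_i-K_i-3L_i\ge -J_i-2L_i$. So $X_i<X_{i-1}-12s$ together with $L_i\le s$ gives $J_i>10s$, and the event lies in $\Bad^{(i)}_{\lceil 10s\rceil,0}$. Your check of the hypotheses of Lemma~\ref{lem:boundary-grows2} is fine, including $10s\le\sqrt m/1000$ from $\epsilon<10^{-8}$.

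The more serious problem is your plan for the constants. The event does not force $J_i\ge 18s$, nor anything substantially above $12s$. If $K_i=L_i=0$ and all $d_i(v_i)=\lfloor 12s\rfloor+1$ open half-edges of $v_i$ go to leaves, the drop is exactly $J_i$. Even taking $j=12s$, the inequality $\bigl(\frac{2edn_1}{12sm}\bigr)^{12s}\le\bigl(\frac{dn_1}{10sm}\bigr)^{10s}$ holds only when $\frac{dn_1}{sm}$ is of order $10^{-3}$ or smaller. Since $d_i(v_i)$ may be as large as $X_{i-1}$, the hypothesis $n_1\le m^{1-\gamma}$ does not ensure this. For larger values of the ratio below $10$, where the target is still nontrivial, neither \eqref{eq:j} nor the raw estimate $\frac{1}{j'!}\bigl(\frac{2dn_1}{m}\bigr)^{j'}$ reaches the target. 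So the literal constants cannot be extracted from Lemma~\ref{lem:boundary-grows2}.

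The paper does not extract them either. It applies \eqref{eq:j} at $j=10\sqrt{X_{i-1}}$ and does not account for the factor $2e$ or the prefactor. What the argument genuinely proves is the bound $\frac{jm}{jm-2edn_1}\bigl(\frac{2edn_1}{jm}\bigr)^{j}$ with $j=\lceil 10s\rceil$, valid when $2edn_1<jm$. That is all the later applications use. For example, in step (B) of the proof of Theorem~\ref{main}, $n_1<10^{-6}\sqrt m$ and $X_{i-1}\le 4(\log m)^4$ give $\frac{dn_1}{sm}\le m^{-1/2+o(1)}$. You should prove that version and stop, rather than trying to sharpen $j$.
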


\begin{proof} 
We have $X_i-X_{i-1} \ge d(v_i)-2J_i-K_i-3L_i \ge -J_i-2L_i$ and  $L_i\le\sqrt{X_{i-1}}$. 
 So if $ X_i<X_{i-1}-12\sqrt{X_i}$ then $J_i\ge 10\sqrt{|X_i|}$. Lemma \ref{lem:boundary-grows2} implies that the probability this occurs is at most $(\frac{d_i(v_i) n_1}{10\sqrt{X_i}m})^{10 \sqrt{X_i}}$ .
\end{proof}

Techniques similar to those used in the proof of Lemma \ref{lem:boundary-grows2}  can be pushed further when we are considering edges between low degree vertices. Specifically, defining 
\begin{equation}\label{eq:delta-star}
    \delta^*=\min\left(10^6,\max\left(k\geq1:\sum_{j=1}^kn_j\leq\frac{n}{10^{24}}\right)\right),
\end{equation}
they allow us to prove the following lemma.

\begin{lem}
\label{lem:boundary-grows3}
    Fix $0<\eps<\frac{1}{100}$.   Assume $H_{i-1}$ is such that  $X_{i-1}+2|E(H_{i-1})|\le 10^6$. Then  for any $(\delta^*+1)$-tuple $(s_1,\dots,s_{\delta^*+1})$, the probability conditioned on $H_{i-1}$ that $v_i$ has $s_j$ neighbours outside $H_{i-1}$ of degree $j$ for each $1\leq j\leq\delta^*+1$  is $O(\prod_{j=1}^{\delta^*+1} (\frac{n_j}{m})^{s_j})$. If $d_i(v_i) \le \delta^*+1$  then the probability that this occurs and $L_i=\ell$ is
    $O( \prod_{j=1}^{\delta^*+1} (\frac{n_j}{m})^{s_j} (\frac{1}{m})^\ell)$.
\end{lem}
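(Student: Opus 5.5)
We prove Lemma \ref{lem:boundary-grows3} with the same iterated switching scheme as in Lemma \ref{lem:boundary-grows2}. Now $H_{i-1}$ has bounded size, since $X_{i-1}+2|E(H_{i-1})|\le 10^6$. We also only track neighbours of bounded degree $j\le\delta^*+1\le 10^6+1$. So every switching count is exact up to constant factors, and we can obtain $O(\prod_j (n_j/m)^{s_j})$ with no $j!$ or $e^j$ losses. Here $d_i(v_i)\le X_{i-1}\le 10^6$, so the total number $\sum_j s_j$ of exposed edges is bounded, and every combinatorial factor such as $\binom{d_i(v_i)}{s_1,\dots,s_{\delta^*+1}}$ is $O(1)$.

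\textbf{Step 1: the reference event.} Let $\cF$ be the target event that $v_i$ has exactly $s_j$ neighbours of degree $j$ outside $H_{i-1}$ for each $j\le\delta^*+1$. We compare it to the event $\cF_0$ that none of the open half-edges of $v_i$ go to a vertex of degree at most $\delta^*+1$ outside $H_{i-1}$, and none go into $V(H_{i-1})$. We pass from $\cF$ to $\cF_0$ by removing the low-degree neighbours one at a time. Define intermediate events $\cA_{t}$, indexed by the vector of remaining counts, as in the proof of Lemma \ref{lem:boundary-grows2}. One step switches $\{v_iu,ab\}$ with $\{v_ib,au\}$, where $d(u)=j$ and $ab$ is a valid switch edge. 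Valid here means: $b$ has degree at least $\delta^*+2$ (or simply large), both endpoints lie outside $V(H_{i-1})\cup N(v_i)$, $a$ is not adjacent to $u$, and one endpoint has degree at most $10\sqrt m$.

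\textbf{Step 2: the switching counts.} Backwards, from a graph with one more degree-$j$ neighbour, the number of reverse switchings is at most $d_i(v_i)\cdot j n_j=O(n_j)$. Forwards, we need $\Omega(m)$ valid switch edges. Edges incident to $H_{i-1}$ and to $N(v_i)$ number $O(1)$, apart from neighbours of very high degree. Edges with both endpoints of degree $>10\sqrt m$ number at most $m/50$. Edges touching $N(u)$ number at most $j\cdot\Delta$.

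\textbf{Step 3: the main obstacle.} The obstacle is to show that edges incident to vertices of degree at most $\delta^*+1$ are only a small fraction of $m$. This is exactly what the definition \eqref{eq:delta-star} of $\delta^*$ provides. By definition $\sum_{j\le\delta^*}n_j\le n/10^{24}$, so the total degree of such vertices is at most $10^6 n/10^{24}\le 10^{-17}m$. This handles degrees up to $\delta^*$.

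Degree $\delta^*+1$ needs separate treatment. When $\delta^*<10^6$, the count $n_{\delta^*+1}$ may be large. Then we do not force $b$ to avoid degree $\delta^*+1$. Instead $b$ must have degree at least $\delta^*+1$, and the switching preserves the counts of all other classes. We then absorb the resulting change in the count for degree $\delta^*+1$: it contributes a factor at most $O(n_{\delta^*+1}/m)\le O(1)$, handled by induction on the total count vector.

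Edges touching $N(u)$ for $d(u)=j$ bounded cost $O(j\sqrt m)$, which is harmless. If $v_i$ has a high-degree neighbour we restrict $a$ so that it is not one. Hence there are $\Omega(m)$ forward switchings per step, each step costs a factor $O(n_j/m)$, and multiplying the $\sum s_j=O(1)$ steps gives $O(\prod_j(n_j/m)^{s_j})$.

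\textbf{Step 4: back edges.} When $d_i(v_i)\le\delta^*+1$, the additional back-edge bound follows as at the end of Lemma \ref{lem:boundary-grows2}. We switch each of the $\ell$ back edges $v_iw$, with $w\in V(H_{i-1})$, with one of $\Omega(m)$ valid switch edges. The reverse count is at most $d_i(v_i)X_{i-1}=O(1)$, which gives a factor $O(1/m)$ per back edge. The product is then $O(\prod_j(n_j/m)^{s_j}m^{-\ell})$, with all constants depending only on the absolute bound $10^6$.
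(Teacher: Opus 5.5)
Your skeleton is the same as the paper's: one switching per removed low-degree neighbour or back edge, with $O(n_j)$ (resp.\ $O(1)$) reverse switchings and $\Omega(m)$ forward ones. The gap is that the $\Omega(m)$ forward bound, which is the whole content of the lemma, is not established, and the estimates you give for it are false in general.

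The lemma assumes nothing about $D^*$ or $d(n)$, so $u$ and $v_i$ may have neighbours of degree $\Theta(m)$.
\begin{itemize}
\item Your bound ``edges touching $N(u)$ number at most $j\cdot\Delta$'' can exceed $m$.
\item Your later claim that these edges cost only $O(j\sqrt m)$ is unjustified. It would hold only if $a$ were always the endpoint of degree at most $10\sqrt m$. But then $b$ may be a high-degree neighbour of $v_i$.
\item The edges at a high-degree common neighbour of $u$ and $v_i$ are unusable in either orientation.
\item Your definition of a valid edge, with \emph{both} endpoints outside $N(v_i)$, can leave only $o(m)$ edges. Let the bulk of the graph be a $K_{10,N}$ (so $\delta^*=9$) and join $v_i$ to all ten hubs; every bulk edge then meets $N(v_i)$.
\item ``Restricting $a$'' does not repair this, because the constraint coming from $N(v_i)$ is on $b$, the vertex that becomes adjacent to $v_i$.
\end{itemize}

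What is missing is the structural fact the paper's proof rests on. Put $\delta'=\delta^*$ if $n_{\delta^*+1}>m/10^{18}$ and $\delta'=\delta^*+1$ otherwise. Let $B$ be the set of vertices of degree exceeding $\delta'$; by the definition of $\delta^*$, $B$ carries total degree at least $1.99m$. Take any $S$ with $|S|\le\delta'$. Then $B\setminus S$ still carries degree at least $4m/5$, because $\sum_{s\in S}d(s)\le\binom{|S|}{2}+m$. Each $x\in B\setminus S$ has $d(x)>\delta'\ge|S|$, so at least a $1/(\delta'+1)$ fraction of its edges avoid $S$. Hence there are $\Omega(m)$ edges with both ends in $B\setminus S$.

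The paper applies this as follows.
\begin{itemize}
\item Take $S=N(u)\setminus\{v_i\}$, of size less than $\delta'$ since $d(u)\le\delta'$. Then $a\notin N(u)$ holds automatically, whatever the degrees of $u$'s neighbours.
\item Get $b\notin N(v_i)$ by orienting the edge. This loses only the $O(1)$ edges with both ends in $N(v_i)$ (as $d(v_i)\le 10^6$), besides the $O(1)$ edges meeting $V(H_{i-1})$.
\item For a back edge $v_iw$, the set to avoid is $S=$ the neighbours of $v_i$ outside $V(H_{i-1})$. The hypothesis $d_i(v_i)\le\delta^*+1$ is exactly what keeps $|S|\le\delta'$.
\end{itemize}
Your Step 4 never uses that hypothesis, which is a symptom of the same gap. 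If $v_i$ and $w$ are both joined to all $s>\delta'$ hubs of a $K_{s,N}$, no bulk edge can be used in either orientation.

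Finally, your treatment of degree $\delta^*+1$ has the inequality the wrong way round. When $n_{\delta^*+1}>m/10^{18}$, the point is that $(n_{\delta^*+1}/m)^{s_{\delta^*+1}}$ is bounded \emph{below} by a constant, since $\sum_j s_j\le 10^6$. So that coordinate can simply be dropped from the statement and degree-$(\delta^*+1)$ vertices placed in $B$. No induction over a changing count vector is needed.
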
 
\begin{proof}

    We note that our bound on $X_i$ implies that the probability we are bounding is $0$  unless the sum of the $s_j$ is at most $10^6$. So, if $n_{\delta^*+1} >\frac{m}{10^{18}}$ then  $(\frac{n_{\delta^*+1}}{m})^{s_{\delta^*+1}}=\Omega(1)$.
    So, in this case,  the lemma is equivalent to the statement obtained from it  by replacing the upper bound on $j$ by
    $\delta^{*}$. We let $\delta'$ be $\delta^*$ in this case, and otherwise we set $\delta'=\delta^*+1$. 
    So we need only prove the statement obtained from the lemma   by replacing the upper bound on $j$ by
    $\delta'$.\par
    Let $B$ be the set of vertices of degree at least $\delta'+1$. By our choice of $\delta^*$, there are at most $ \frac{n}{10^{12}}<\frac{m}{10^{12}}$
    edges incident to vertices of degree at most $\delta^*$, in particular when $\delta'=\delta^*$. By our choice of $\delta'$, if $\delta'=\delta^*+1$ then there are at most $\frac{m}{10^{12}}$
    edges incident to vertices of degree  $\delta'$. So the sum of the degrees of the vertices  
    in $B$  is at least $\frac{199m}{100}$. For any set $S$ of at most $\delta'$ vertices and large $m$,
    the sum of the degrees of the vertices in $S$ is at most $\binom{\delta'}{2}+m<\frac{26m}{25}$. Hence the sum of the degrees of the vertices 
    of $B\setminus S$ is at least $\frac{4m}{5}$. 
    Every vertex in $B\setminus S$ has  degree exceeding $\delta'$ and hence at least one neighbour outside of $S$, so 
    the number of edges which have one endpoint in $B\setminus S$ and the other in $V\setminus S$ is at least $|B\setminus S|\geq\frac{4m}{5(\delta'+1)}>\frac{m}{10^7}$. By our upper bound $\frac{m}{10^{12}}$ on the number of edges incident to $V\setminus B$
    we see there are $\Omega(m)$ edges within $B\setminus S$. 
        
    Fix $H_{i-1}$ as in the statement of the lemma. For any $(\delta'+1)$-tuple $(\ell_1,\dots,\ell_{\delta'},\ell)$ where $0\leq \ell_j\leq s_j$ for each $1\leq j\leq\delta'$ and $0\leq\ell\leq d_i(v_i)$, let $\cF_{(\ell_1,\dots,\ell_{\delta'},\ell)}$ denote the event that $H_{i-1}$ is explored and $v_i$ has $\ell_j$ neighbours outside $H_{i-1}$ of degree $j$ for each $1\leq j\leq\delta' $, and $L_i=\ell$.

    We consider first switchings which  reduce the value of some $\ell_j\geq1$ for some $j \le \delta'$. 
    We consider an edge $v_iw$ where $w$ is outside $H_{i-1}$ and has degree $j$. We set  $S=N(w)-v$ and note $|S| \le \delta'-1$. As noted above, there are  $\Omega(m)$ edges within  $B\setminus S$. At most $10^{12}$ have both their endpoints in $N(v_i)$ 
    and at most $2(10)^{12}$ have an endpoint in $V(H_{i-1})$. So there are $\Omega(m)$ valid switches using  $v_iw$ and an $xy$ 
    where $x \in B\setminus (S\cup V(H_{i-1}))$ and $y \in B\setminus(S\cup N(v_i)\cup V(H_{i-1}))$. Since $d_i(v_i),d(w) \le 10^6$, there are  $d_i(v_i)jn_j=O(n_j)$ swaps in the opposite direction. Letting $L=(\ell_1,\dots,\ell_{\delta^*},\ell)$ and $L'$ denote $L$ with $\ell_j$ replaced by $\ell_j-1$, then
\begin{equation*}
    \p{\cF_L}\leq\p{\cF_{L'}}\frac{O(n_j)}{\Omega(m)}
\end{equation*}
    and the first statement follows.\par
    We turn to the second statement, so we can assume $d_i(v_i) \le \delta^*+1$. 
    We consider switchings which  reduce  $L_i=\ell\geq1$. We let $v_iw$ be an edge of $G$ with $w \in V(H_{i-1})$.
    We let $S$ be the neighbours of  $v_i$ outside $V(H_{i-1})$. So, $|S| \le \delta'$. 
    There are  $\Omega(m)$ edges within  $B\setminus S$. At most $10^{12}$ have both their endpoints in $N(w)$ 
    and at most $2(10)^{12}$ have an endpoint in $V(H_{i-1})$. So there $\Omega(m)$ valid switches using  $v_iw$ and an $xy$ 
    where $x \in B\setminus(S\cup V(H_i))$ and $y \in B\setminus(S\cup N(w)\cup V(H_{i-1}))$. Since $X_{i-1}\le 2(10)^{12}$, there are $X_{i-1}^2=O(1)$ swaps in the opposite direction. Letting $L=(\ell_1,\dots,\ell_{\delta^*},\ell)$ and let $L'$ denote $L$ with $\ell$ replaced by $\ell-1$, then
\begin{equation*}
    \p{\cF_L}\leq\p{\cF_{L'}}\frac{O(1)}{\Omega(m)}
\end{equation*}
    and the second statement follows.
\end{proof}

\section{Analyzing The  Later Stages of The Exploration}
\label{seclater}
In this section, we  consider the behaviour of the exploration after it has survived for some $C=C(m)$ iterations. By the end of the section we  will have developed enough machinery to prove Theorem \ref{thekeyresult}.

We first  prove:

\begin{lem}\label{lem:sum-J}
    For any $m$ large enough, $0<\gamma<1$, $0<\epsilon<10^{-10}$, $n_1 \le m^{1-\gamma}$, $n_2<\eps m$, $0<C \le m^{\gamma /20}$, $1\leq i$, $0\leq r\le C(\log m)^2$, the following holds: if $2|E(H_{i-1})|+X_{i-1} \le \epsilon m$ then  
\begin{equation*}
    \Cprob{\sum_{t=i}^{i+r-1} J_t\I{X_t\leq X_{t-1}} \ge \frac{C}{3} \text{ and }\forall i\leq t\leq i+r-1: X_{t-1}\le 2C}{H_{i-1}} <m^{-\gamma C/4}.
\end{equation*}
\end{lem}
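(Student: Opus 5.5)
We will bound the probability by a union bound over how the total $\lceil C/3\rceil$ is split among the iterations $i,\dots,i+r-1$, using the single-iteration estimate \eqref{eq:j} of Lemma \ref{lem:boundary-grows2} in each iteration. The estimates are then chained via the tower property of conditional expectation.

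Fix a vector $(j_i,\dots,j_{i+r-1})$ of nonnegative integers with $\sum_t j_t=\lceil C/3\rceil$. On the event in question, for each $t$ with $j_t>0$ we need $\Bad^{(t)}_{j_t,0}$ to occur, since this requires $X_t\le X_{t-1}$ and $J_t\ge j_t$.

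Before iteration $t$ we have $X_{t-1}\le 2C\le 2m^{\gamma/20}$. Also $2|E(H_{t-1})|\le 2|E(H_{i-1})|+\sum_{s<t}2d_s(v_s)\le \epsilon m+O(C^2(\log m)^2)$, so the hypotheses of Lemma \ref{lem:boundary-grows2} hold (after adjusting $\epsilon$), and $j_t\le C\le\sqrt m/1000$. Crucially, $d_t(v_t)\le X_{t-1}\le 2C$. Hence \eqref{eq:j} gives, conditionally on $H_{t-1}$,
\[
\p{\Bad^{(t)}_{j_t,0}}\le 2\left(\frac{4eCn_1}{j_tm}\right)^{j_t}\le 2\left(4eC m^{-\gamma}\right)^{j_t},
\]
because $n_1\le m^{1-\gamma}$ and $j_t\ge1$. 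The ratio $\frac{jm}{jm-2ed n_1}$ is at most $2$, since $4eCn_1/m\le 4e m^{\gamma/20-\gamma}<1/2$.

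Iterate by conditioning on $H_{t-1}$ successively, from the last iteration backwards. The indicator that $X_{t-1}\le 2C$ is $H_{t-1}$-measurable, so each factor can be applied while the earlier events are held fixed. The probability for a fixed vector is then at most
\[
2^{r'}\left(4eCm^{-\gamma}\right)^{C/3},
\]
where $r'\le C/3$ is the number of nonzero $j_t$. Since $C\le m^{\gamma/20}$, this is at most $(8eC m^{-\gamma})^{C/3}\le m^{-(1-1/20)\gamma C/3+o(1)}$.

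The number of vectors is at most $\binom{r+C/3}{C/3}\le (3e(r+C)/C)^{C/3}\le (10(\log m)^2)^{C/3}=m^{o(C)}$. The total is therefore at most $m^{-0.95\gamma C/3+o(C)}<m^{-\gamma C/4}$ for large $m$.

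The main obstacle is making the chaining rigorous. The bound of Lemma \ref{lem:boundary-grows2} holds conditionally on $H_{t-1}$ only when its hypotheses hold, and these hypotheses depend on the random history. We handle this by intersecting with the $H_{t-1}$-measurable events $\{X_{t-1}\le 2C\}$ and the deterministic edge-count bound before applying the lemma.

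A minor point is the case $C<3$, where the target count is $1$ or $2$. This is handled by the same computation, with $\lceil C/3\rceil$ in the exponent still sufficing because $\gamma C/4<\lceil C/3\rceil\cdot 0.9\gamma$.
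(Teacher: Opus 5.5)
Your proposal is correct and follows essentially the same route as the paper. Both arguments take a union bound over nonnegative integer sequences summing to (the ceiling of) $C/3$ and dominated by $J_t\I{X_t\le X_{t-1}}$. Both apply the single-iteration bound \eqref{eq:j} of Lemma \ref{lem:boundary-grows2} in each iteration with $d_t(v_t)\le X_{t-1}\le 2C$, chain these bounds through the conditioning on $H_{t-1}$, and count sequences at a cost of only $(\log m)^{O(C)}$. The differences are cosmetic: you count sequences with a binomial coefficient where the paper uses the cruder $r^{C/3}$, and you explicitly bound the prefactor $\frac{jm}{jm-2ed_t(v_t)n_1}$ by $2$ and handle the ceiling, both of which the paper leaves implicit.
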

\begin{proof}
If the event whose conditional probability we aim to bound occurs, we can choose one of a set of at most $r^{C/3}$  sequences of nonnegative integers $J'_t$ which sum to $\frac{C}{3}$  such that for each $i \le t \le i+r-1$, $J'_t \le J_t$, and $J'_t>0$ only when $X_t \le X_{t-1}$. 
We can choose such a set by repeatedly increasing  some $J'_t$ which is less than $J_t$. For any $t$  with $i \le t \le i+r-1$ we have by the hypotheses  and that fact that $d_s(v_s)<X_s$ that $2|E(H_{t-1})|+X_{t-1} \le 2|E(H_{i-1})| +2\sum_{s=i}^{t-1} d_s(v_s) +X_{t-1}  \le \frac{m}{10^{5}}$. We may thus apply the bound \eqref{eq:j} from Lemma \ref{lem:boundary-grows2} to the positive $J'_t\leq\frac{C}{3}\leq\sqrt{m}/1000$. Doing so, and then applying our bounds on $r$, $n_1$ and $C$, and the fact $d_t(v_t)\leq X_{t-1}<2C$, we obtain that 
\begin{align*}
&\Cprob{\sum_{t=i}^{i+r-1} J_t\I{X_t\leq X_{t-1}} \ge \frac{C}{3} \text{ and }\forall i\le t \le i+r-1: X_{t-1} \le 2C}{H_{i-1}}\\
\leq\,&\sum_{\{J'_t\}_{t=i}^{i+r-1}}\prod_{t=i}^{i+r-1}\left(\frac{2e(2C)n_1}{J'_tm}\right)^{J'_t}\leq r^{C/3}\left(\frac{2e(2C)n_1}{m}\right)^{C/3}\\
\leq\,&\left(\frac{4e(C\log{m})^2n_1}{m}\right)^{C/3}\leq(m^\frac{-8\gamma}{9})^{C/3}.\qedhere
\end{align*}
\end{proof}

\begin{lem}\label{lem:sum-K}
    For any $m$ large enough, $0<\gamma<1$, $0<\epsilon<10^{-6}$, $n_1<\eps m$, $n_2 \le 10^{-6}m$, $0<C \le m^{\gamma /20}$,  $i \ge 1$, and $0\leq r\le C(\log m)^2$, the following holds: if $2|E(H_{i-1})|+X_{i-1} \le \epsilon m$ then, conditioned on $H_{i-1}$,
\begin{multline*}
    \p{\left|\left\{i \le t \le  i+ r-1:K_t=d_t(v_t)>0\right\}\right|>\frac{ r}{2} \text{ and }\forall i \le t\le i+ r-1:X_{t-1}  \le 2C}\\
    \le 14^{- r}.
\end{multline*}
\end{lem}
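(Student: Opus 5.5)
The plan is to follow the template of the proof of Lemma~\ref{lem:sum-J}: take a union bound over which iterations are ``bad'', and chain one-step conditional bounds from Lemma~\ref{lem:boundary-grows2} along the exploration. Call iteration $t$ \emph{fully absorbed} if $K_t=d_t(v_t)>0$, meaning every open half-edge of $v_t$ is matched to a vertex of degree $2$.

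First we check that Lemma~\ref{lem:boundary-grows2} applies at each such $t$, and get a one-step bound.
\begin{itemize}
\item On a fully absorbed iteration, each matched degree-$2$ vertex either is new, changing $X$ by $0$, or lies in $H_{t-1}$, decreasing $X$. Hence $X_t\le X_{t-1}$ and $K_t\ge k:=d_t(v_t)$, so the event $\Bad^{(t)}_{0,k}$ occurs.
\item On the event that $X_{s-1}\le 2C$ for all $i\le s\le t$, we have $d_s(v_s)\le X_{s-1}\le 2C$. Therefore
\[
2|E(H_{t-1})|+X_{t-1}\le \epsilon m+2\sum_{s=i}^{t-1}d_s(v_s)+2C\le \epsilon m+4C^2(\log m)^2+2C,
\]
which is far below $10^{-5}m$ because $C\le m^{\gamma/20}$. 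Also $k\le 2C\le \sqrt m/1000$. So the hypotheses of Lemma~\ref{lem:boundary-grows2} hold, with $n_1<\epsilon m$ and $n_2\le 10^{-6}m$ (the lemma asks $n_2<\epsilon m$; its proof only uses $n_1+2n_2\le 10^{-5}m$, which still holds).
\item Since $10ed_t(v_t)n_2\le 10e\cdot 10^{-6}\,k m<km$, bound \eqref{eq:k} gives
\[
\Cprob{\Bad^{(t)}_{0,k}}{H_{t-1}}\le \frac{1}{1-3\cdot 10^{-5}}\bigl(3\cdot 10^{-5}\bigr)^{k}\le 10^{-4},
\]
using $k\ge 1$.
\end{itemize}

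Next we chain these bounds. Fix a set $T\subseteq\{i,\dots,i+r-1\}$ with $|T|=\lceil r/2\rceil$; there are at most $2^r$ such sets. Define a stopped version of the process: for $t\in T$, let $E_t$ be the event that $X_{s-1}\le 2C$ for all $i\le s\le t$ and iteration $t$ is fully absorbed. Each $E_t$ is measurable with respect to $H_t$, and by the previous step $\Cprob{E_t}{H_{t-1}}\le 10^{-4}$ for every $H_{t-1}$ (when the constraint has already failed the probability is $0$). Conditioning successively on $H_{t-1}$ for $t\in T$ in increasing order, the tower property gives $\p{\bigcap_{t\in T}E_t \mid H_{i-1}}\le (10^{-4})^{r/2}=10^{-2r}$. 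The event in the lemma is contained in $\bigcup_T\bigcap_{t\in T}E_t$, so its probability is at most $2^r10^{-2r}=50^{-r}<14^{-r}$.

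The main obstacle is making this chaining rigorous. The bound from Lemma~\ref{lem:boundary-grows2} is only valid while its hypotheses hold, and those hypotheses are part of the event being bounded rather than guaranteed. I will handle this by building the stopping into $E_t$ as above, and by checking once, uniformly over all $t\le i+r-1$, that $|E(H_{t-1})|$ and $X_{t-1}$ stay in the required range on that event. Conditioning on each $H_{t-1}$ then gives a valid factor of $10^{-4}$ regardless of the history.
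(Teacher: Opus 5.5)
Your proof is correct and follows essentially the same route as the paper. Both arguments take a per-iteration bound from \eqref{eq:k} of Lemma~\ref{lem:boundary-grows2} and then union-bound over the at most $2^r$ subsets of about $r/2$ iterations; your constant is $10^{-4}$ where the paper uses the looser $1/800$, so you get $2^r10^{-2r}$ instead of the paper's $2^r800^{-r/2}$, and your stopped events $E_t$ with tower-property chaining simply make rigorous what the paper summarizes as stochastic domination by a binomial.
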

\begin{proof} We have $C>0$ by hypothesis and we can assume $n_2>0$ as otherwise every $K_t=0$. 
For any $t$  with $i \le t \le i+ r-1$ we have by the hypotheses that $2|E(H_{t-1})|+X_{t-1} \le 2|E(H_{i-1})|+X_{t-1} +\sum_{s=i-1}^{t-1} d_s(v_s) \le \frac{m}{10^{5}}
$. The bound \eqref{eq:k} from Lemma~\ref{lem:boundary-grows2}  and our hypotheses imply that the probability that $0<K_t =d_t(v_t)\leq 2C<\sqrt{m}/1000$ conditioned on $H_{t-1}$ is at most $\frac{1}{800}$. 
So on the event that $2|E(H_{i-1})|+X_{i-1} \le \epsilon m$ and that for all $i \le t \le i+ r$, $X_{t-1} \le 2C$, the size of the set $\left\{i \le t \le  i+ r-1:K_t=d_t(v_t)>0\right\}$ is stochastically dominated by a Binomial random subset of a set of size $ r$, and thus the expected 
number of subsets of $\{i,\dots,i+ r-1\}$ of size at least $\frac{ r}{2}$ such that for each $t$ in the set  $K_t=d_t(v_t)>0$ is at most $2^ r800^{- r/2}<14^{- r}$. 
\end{proof}

\begin{lem}\label{lem:sum-L}
    For any $m$ large enough, $0<\gamma<1$, $0<\epsilon<10^{-6}$, $n_1,n_2<\eps m$, $C \le m^{\gamma /20}$, $1\leq i$, $ r\le C(\log m)^2$ the following holds, if $2|E(H_{i-1})|+X_{i-1} \le \epsilon m$ then
\begin{align*}
    \Cprob{\sum_{t=i}^{i+r-1} L_t\I{X_t\leq X_{t-1}} \ge \frac{C}{15} \text{ and }\forall i \le t\le i+ r-1:X_{t-1}\leq 2C}{H_{i-1}} 
    &\leq m^{-\sqrt{C}/25}.
\end{align*}
\end{lem}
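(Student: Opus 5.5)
The plan is to mirror the proof of Lemma~\ref{lem:sum-J}, with the bound \eqref{eq:l} from Lemma~\ref{lem:boundary-grows2} replacing the bound \eqref{eq:j}. First I check that Lemma~\ref{lem:boundary-grows2} applies at every step $t$ with $i\le t\le i+r-1$. On the event in question $X_{t-1}\le 2C$, and $d_s(v_s)\le X_{s-1}\le 2C$ for each step $s$. Hence
\[
2|E(H_{t-1})|+X_{t-1}\le \epsilon m+2C\cdot 2C(\log m)^2+2C\le \frac{m}{10^5},
\]
using $C\le m^{\gamma/20}$. So the hypotheses on $|E(H_{t-1})|$ and $X_{t-1}$ hold. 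Taking $j=k=0$ (so $p^{(t)}_{0,0}\le 1$), \eqref{eq:l} gives
\[
\Cprob{X_t\le X_{t-1},\ L_t\ge \ell}{H_{t-1}}\le \frac{2d_t(v_t)X_{t-1}}{\ell m}\le \frac{8C^2}{\ell m}.
\]

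The difficulty compared with Lemma~\ref{lem:sum-J} is that \eqref{eq:l} contributes only one factor $1/m$ per step, not a factor $(\cdot)^{\ell}$. So a single step with a large $L_t$ costs only $O(C^2/m)$. I use the deterministic bound from Section~\ref{sec:outline}: $L_t\le \sqrt{X_{t-1}}\le \sqrt{2C}$. Therefore, if $\sum_t L_t\I{X_t\le X_{t-1}}\ge C/15$, the number of steps $t$ in the window with $L_t\ge 1$ and $X_t\le X_{t-1}$ is at least
\[
\frac{C/15}{\sqrt{2C}}=\frac{\sqrt{C}}{15\sqrt 2}\ge \frac{\sqrt{C}}{22}.
\]
This pigeonhole step is what produces $\sqrt{C}$ in the exponent, and it is the main point of the argument.

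Next I union bound over the set $T$ of such steps, of size $s=\lceil \sqrt{C}/22\rceil$. There are at most $\binom{r}{s}\le r^{s}\le (C(\log m)^2)^{s}$ choices of $T$. For a fixed $T$, I chain the conditional bounds along the filtration in increasing order of $t$, as in Lemma~\ref{lem:sum-J}. At each $t\in T$ I apply the bound above with $\ell=1$, conditioning on $H_{t-1}$ together with the event that $X_{t'-1}\le 2C$ for all earlier $t'$ in the window. This gives
\[
\p{\cdot}\le \bigl(C(\log m)^2\bigr)^{s}\left(\frac{8C^2}{m}\right)^{s}\le \left(\frac{8C^3(\log m)^2}{m}\right)^{\sqrt{C}/22}.
\]
Since $C\le m^{\gamma/20}$ with $\gamma<1$, we have $8C^3(\log m)^2/m\le m^{-1/2}$ for $m$ large. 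The bound is then at most $m^{-\sqrt{C}/44}$.

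To reach the stated exponent $\sqrt{C}/25$, I tighten this in one of two ways. The first option is to keep the $1/\ell$ gain in \eqref{eq:l}: the bound is $8C^2/(\ell m)$ for steps with $L_t=\ell$. Splitting the sum $C/15$ among fewer steps with larger $\ell_t$ then still yields roughly $C/15$ divided by $\sqrt{2C}$ factors of $m^{-1+o(1)}$. The second option is to sharpen the pigeonhole using $X_{t-1}\ge d_t(v_t)(L_t+1)$ and $d_t(v_t)\ge L_t$, which give $L_t\le \sqrt{X_{t-1}}$. Since each factor is actually $m^{-1+3\gamma/20+o(1)}$, the count $s\ge \sqrt{C}/22$ already gives exponent at least $(1-3/20-o(1))\sqrt{C}/22>\sqrt{C}/25$. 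This last estimate is the one I expect to use, and the constant bookkeeping there is the only delicate step.
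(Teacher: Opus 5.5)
Your argument is the paper's own proof. The deterministic bound $L_t\le\sqrt{X_{t-1}}\le\sqrt{2C}$ forces at least $\sqrt{C}/(15\sqrt{2})$ steps in the window with $L_t\ge 1$ and $X_t\le X_{t-1}$. You union bound over at most $r^{s}$ such index sets. Then \eqref{eq:l}, with $j=k=0$, $p\le 1$ and $\ell=1$, gives a factor $2d_t(v_t)X_{t-1}/m\le 8C^2/m$ per chosen step, for a total bound of $\bigl(8C^3(\log m)^2/m\bigr)^{s}$.

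The gap is in the final constant check, and as you wrote it, it is false. Your count $s\ge\sqrt{C}/22$ gives exponent $\frac{1-3/20}{22}\sqrt{C}=\frac{17}{440}\sqrt{C}\approx 0.0386\sqrt{C}$, which is smaller than $\sqrt{C}/25=0.04\sqrt{C}$. You also cannot rescue this by using $3\gamma/20$ in place of $3/20$, because $\gamma$ may be arbitrarily close to $1$. With the count $\sqrt{C}/22$ you would need $1-3\gamma/20>22/25$, i.e.\ $\gamma<0.8$.

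The slack was lost when you rounded $15\sqrt{2}\approx 21.21$ up to $22$. Keep the exact count $s\ge\sqrt{C}/(15\sqrt{2})$ instead. Using $C^3\le m^{3\gamma/20}<m^{3/20}$, the bound becomes $m^{-(17/20-o(1))\sqrt{C}/(15\sqrt{2})}$, and $\frac{17/20}{15\sqrt{2}}\approx 0.04007>\frac{1}{25}$. This yields $m^{-\sqrt{C}/25}$ for $m$ large, exactly as in the paper. The margin is thin, so absorbing the factor $8(\log m)^2$ requires $m$ very large, but the statement allows that. Your ``first option'' using the $1/\ell$ gain is unnecessary.
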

\begin{proof}
    Note $L_t<\sqrt{X_{t-1}}\leq\sqrt{2C}$ deterministically. Thus
\begin{equation*}
    \sum_{t=i}^{i+r-1} L_t\I{X_t\leq X_{t-1}}\leq\sqrt{2C}\sum_{t=i}^{i+ r-1}\I{X_t\leq X_{t-1}\text{,  }L_t>0}.
\end{equation*}
and we need only bound the probability that $\sum_{t=i}^{i+r-1}\I{X_{t} \leq X_{t-1}\text{, }L_t>0} \le  \frac{\sqrt{C}}{15\sqrt{2}}$.\par
Since $|E(H_{t-1})| \le |E(H_{i-1})| +\sum_{s=i-1}^{t-1} d_s(v_s)$ the hypotheses imply that for 
all $t$ with $i\leq t \le i+r-1$, $|E(H_{t-1})|+X_{t-1} \le \frac{m}{10^5 }$.
So, by the bound \eqref{eq:l} from Lemma \ref{lem:boundary-grows2} with $p^{(i)}_{j,k}\leq 1$, since $d_t(v_t),X_{t-1}\leq 2C$,
\begin{align*}
    &\Cprob{\sum_{t=i}^{i+ r-1}L_t\I{X_t\leq X_{t-1}} \ge \frac{C}{15} \text{ and }\forall i \le t\le i+ r-1:X_{t-1}\leq 2C}{H_{i-1}}\\<\,& r^{\frac{\sqrt{C}}{15\sqrt{2}}}\left(\frac{8C^2}{m}\right)^{\frac{\sqrt{C}}{15\sqrt{2}}}\leq\left(\frac{8C^3(\log{m})^2}{m}\right)^{\frac{\sqrt{C}}{15\sqrt{2}}}<m^{-\sqrt{C}/25} \qedhere
\end{align*}
\end{proof}
\begin{cor}
\label{firstgoodcor}
      For any $m$ large enough, $0<\epsilon<10^{-8}$, $0<\gamma<1$, if $n_1 \le m^{1-\gamma}, n_2 \le 10^{-6}m$, and $3\leq C \le m^{\gamma /20}$, then the probability that for a specific $i$, $|E(H_{i-1})|+X_{i-1} \le \epsilon m$ and $0<X_t<2C$ for all $i \le t<i+C(\log m)^2$ is at most  $m^{-\gamma\sqrt{C}/26}$. 
\end{cor}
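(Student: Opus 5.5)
We argue by combining Lemmas \ref{lem:sum-J}, \ref{lem:sum-K} and \ref{lem:sum-L} with $r=C(\log m)^2$, the window being iterations $i\le t\le i+r-1$. On the event in question every $X_{t-1}<2C$, and the hypothesis $|E(H_{i-1})|+X_{i-1}\le\epsilon m$ (after adjusting $\epsilon$ by a factor of $2$) is what these three lemmas require. So, outside an event of probability at most
\[
m^{-\gamma C/4}+14^{-r}+m^{-\sqrt C/25},
\]
all of the following hold. First, $\sum_t J_t\I{X_t\le X_{t-1}}<\frac C3$. Second, at most $r/2$ of the steps have $K_t=d_t(v_t)>0$. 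Third, $\sum_t L_t\I{X_t\le X_{t-1}}<\frac C{15}$. The case $n_1\ge \epsilon m$ cannot occur, since $n_1\le m^{1-\gamma}$, so all three lemmas apply. Since $14^{-r}$ and $m^{-\gamma C/4}$ are much smaller than $m^{-\gamma\sqrt C/26}$ for $C\ge3$, and $m^{-\sqrt C/25}\le m^{-\gamma\sqrt C/26}$ for large $m$, it remains to show that these three good events are incompatible with $0<X_t<2C$ throughout the window.

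The deterministic step is a potential argument on the total change $X_{i+r-1}-X_{i-1}$, which lies in $(-2C,2C)$. Split the steps into increasing steps ($X_t>X_{t-1}$) and non-increasing steps. A non-increasing step loses at most $2J_t+K_t+3L_t-d_t(v_t)$. Because $K_t\le d_t(v_t)$, the $K$-contribution never makes $X_t-X_{t-1}$ drop below $-(2J_t+3L_t)$. Summing over all steps, the total decrease from non-increasing steps is at most $2\cdot\frac C3+3\cdot\frac C{15}=\frac{13C}{15}$.

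Next we count the number of increasing steps. Every step with $X_t>0$ has $d_t(v_t)\ge1$. Any step with $J_t=L_t=0$ has $X_t-X_{t-1}\ge d_t(v_t)-K_t\ge0$, and it is strictly positive unless $K_t=d_t(v_t)$. By the second good event, fewer than $r/2$ steps have $K_t=d_t(v_t)>0$. Steps with $J_t+L_t>0$ that are non-increasing number at most $\frac C3+\frac C{15}<C$. Hence at least $r/2-C$ steps are either increasing or are steps with $J_t+L_t>0$ that are still increasing. In every case at least $r/2-C\ge C(\log m)^2/3$ steps increase $X$ by at least $1$. The total change is therefore at least $C(\log m)^2/3-\frac{13C}{15}>2C$, which contradicts $X_t<2C$ at the end of the window. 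This proves the corollary.

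The main obstacle is precisely this bookkeeping: making sure that steps with $K_t<d_t(v_t)$ but positive $J_t$ or $L_t$ are handled correctly. Steps of that kind may be increasing, in which case they only help, or non-increasing, in which case they are charged to the $J$/$L$ budgets. One also has to check that the conditional bounds may be applied, which uses $d_t(v_t)\le X_{t-1}<2C$ and that $|E(H_{t-1})|$ grows by at most $2Cr=o(m)$ over the window.
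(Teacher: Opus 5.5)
Your proposal is correct and is essentially the paper's own proof. The paper also applies Lemmas \ref{lem:sum-J}, \ref{lem:sum-K} and \ref{lem:sum-L} on a window of $\lfloor C(\log m)^2\rfloor$ iterations and then argues deterministically that $X_{i+r}\ge X_i+Y-2\sum_s J_s\I{X_s\le X_{s-1}}-3\sum_s L_s\I{X_s\le X_{s-1}}$, where $Y$, the number of steps with $K_s\neq d_s(v_s)$, is at least $3C$ off the bad event of Lemma \ref{lem:sum-K}; this is the same bookkeeping as your count of at least $r/2-C$ increasing steps against a total decrease of at most $13C/15$. The only nit, which the paper shares, is that the event does not bound $X_{i-1}$, so strictly you should run the window from $i+1$ and condition on $H_i$; this changes nothing.
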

\begin{proof}

Recall that $X_{s}-X_{s-1} \ge d_s(v_s)-2J_s-K_s-3L_s$.
So, if $d_s(v_s) \neq K_s$ then either $J_s>0$, $L_s>0$, or $X_s>X_{s-1}$.
Hence letting   $   r =\lfloor  C(\log{m})^2 \rfloor$ and  $Y$ be the number of $s$ with $i \le s \le  i+ r-1$ for which $K_s \neq d_s(v_s)$, we have that  the number of open edges increases by  at  least $Y-\sum_{s=i}^{i+ r-1} J_s\I{X_s\leq X_{s-1}}-\sum_{s=i}^{i+ r-1}L_s\I{X_s\leq X_{s-1}}$ of the iterations between $i$ and  $i+ r-1$. Furthermore, the total decrease over those of these $r$ iterations in which the number of open edges decreases is at most $\sum_{s=i}^{i+ r-1} J_s\I{X_s\leq X_{s-1}}+\sum_{s=i}^{i+ r-1}2L_s\I{X_s\leq X_{s-1}}$. So, $X_{i+ r} \ge X_i+Y-2\sum_{s=i}^{i+ r-1} J_s\I{X_s\leq X_{s-1}}-3\sum_{s=i}^{i+ r-1}L_s\I{X_s\leq X_{s-1}}$.
 
Let $\cA$ denote the event that  $0<d_t(v_t)< 2C$ for all $i \le t \leq i+ r$. 
So we need to show $\p{\cA}  \le m^{-\gamma\sqrt{C}/26}$. 
We note that if $\cA$ holds then  for every $t$ with $i \le t \le i+ r-1$,
$|E(H_t)|+X_t \le \frac{m}{10^{6}}$. 
Applying  Lemma \ref{lem:sum-K}, the probability $\cA$  holds and $Y<3C<\frac{ r}{2}$   is 
at most $14^{- r} <m^{-C}$. Applying  Lemma \ref{lem:sum-J}, the probability $\cA$  holds and
$\sum_{s=i}^{i+ r-1} J_s\I{X_s\leq X_{s-1}}>\frac{C}{3}$ is at most
$m^{-\gamma C/4}$,    
Finally, by Lemma \ref{lem:sum-L}, the probability $\cA$  holds and  $\sum_{s=i}^{i+ r-1} L_s\I{X_s\leq X_{s-1}}>\frac{C}{15}$  is  $m^{-\sqrt{C}/25}$. But if $\cA$ holds while $Y \ge 3C$, $\sum_{s=i}^{i+ r-1} J_s \le \frac{C}{3}$, and $\sum_{s=i}^{i+ r-1}L_s\I{X_s\leq X_{ r-1}} \le \frac{C}{15}$ 
then $X_ r \ge 3C-\frac{2C}{3}-\frac{C}{5} >2C$, contradicting the fact $\cA$ holds; so the probability of this event is $0$.  
\end{proof}

\begin{cor}
\label{secondgoodcor}
     For any $m$ large enough, if $0<\gamma<1$, $0<\eps<10^{-8}$, $n_1 \le m^{1-\gamma}$, $n_2 \le 10^{-6}m$, and $3\leq C \le m^{\gamma /20}$, then the probability given that for a specific $i \ge 1$, $X_{i-1} \ge C$ 
    and there is no $t>i$ such that either $X_t\ge 2C$ or  $2|E(H_{t-1})|+X_{t-1} \ge \epsilon m$ 
    is $2m^{-\gamma\sqrt{C}/26}$. 
\end{cor}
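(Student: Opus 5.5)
The plan is to reduce Corollary \ref{secondgoodcor} to Corollary \ref{firstgoodcor} by a split according to whether the walk ever returns to the level $0$. I read the event as follows: starting from some $i$ with $X_{i-1}\ge C$, the walk never reaches $2C$ and the exploration never grows to size $\epsilon m$. Let $r=\lfloor C(\log m)^2\rfloor$. On this event there are two possibilities, and the corollary has the factor $2$ so that each can be charged one copy of the bound $m^{-\gamma\sqrt C/26}$.

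In the first case the walk stays positive for the whole window $i\le t<i+r$. It also stays below $2C$ there, by assumption, and throughout $2|E(H_{t-1})|+X_{t-1}\le\epsilon m$. This is exactly the event controlled by Corollary \ref{firstgoodcor} applied at index $i$, where the hypothesis $|E(H_{i-1})|+X_{i-1}\le\epsilon m$ is supplied by the assumption at time $i$. So this case has probability at most $m^{-\gamma\sqrt C/26}$.

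In the second case $X_t=0$ for some $t$ in the window. The walk starts at $X_{i-1}\ge C$ and must fall to $0$ within $r$ steps without ever exceeding $2C$. The net decrease is at least $C$. From $X_s-X_{s-1}\ge d_s(v_s)-2J_s-K_s-3L_s$, every decreasing step satisfies $X_{s-1}-X_s\le 2J_s+3L_s$ (the $K_s$ term cannot make the step negative on its own, since $K_s\le d_s(v_s)$). Hence a total drop of at least $C$ forces either $\sum_s J_s\I{X_s\le X_{s-1}}\ge C/3$ or $\sum_s L_s\I{X_s\le X_{s-1}}\ge C/15$, since $2\cdot\frac{C}{3}+3\cdot\frac{C}{15}<C$. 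The hypotheses of Lemma \ref{lem:sum-J} and Lemma \ref{lem:sum-L} hold: $X_{t-1}\le 2C$ on the window, and the edge count stays small because $d_s(v_s)\le 2C$ and $r\le C(\log m)^2$. These lemmas bound the two alternatives by $m^{-\gamma C/4}$ and $m^{-\sqrt C/25}$ respectively. Both are at most $m^{-\gamma\sqrt C/26}$ once $C\ge3$ and $\gamma<1$, so after absorbing constants this case is also bounded by one copy of the target.

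Adding the two cases gives $2m^{-\gamma\sqrt C/26}$. If the statement intends "no $t>i$ ever", rather than just the first window, nothing more is needed: the event in question is contained in the window event at $i$, so no union over later windows is required.

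The main obstacle is bookkeeping. First, the decrement identity must be applied only on decreasing steps, and it has to match the indicator form used in Lemmas \ref{lem:sum-J} and \ref{lem:sum-L}. Second, the truncated walk, with $X_t=0$ frozen after extinction, must still fit Corollary \ref{firstgoodcor}'s requirement $0<X_t$. I would define the stopping time at the first hitting of $0$ and apply the lemmas up to that time.
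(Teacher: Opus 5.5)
Your proposal is correct and is essentially the paper's proof. The paper also splits according to whether the walk hits $0$ within a window of length about $C(\log m)^2$, handles the positive case by Corollary~\ref{firstgoodcor}, and handles the extinction case via Lemmas~\ref{lem:sum-J} and~\ref{lem:sum-L}. The only cosmetic difference is that you use the cruder per-step bound $X_{s-1}-X_s\le 2J_s+3L_s$ with thresholds $C/3$ and $C/15$, which match the lemma statements exactly, whereas the paper uses $X_{s-1}-X_s\le J_s+2L_s$ and splits according to whether $\sum J_s$ or $\sum 2L_s$ exceeds $C/2$.
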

\begin{proof}
Fix iteration $i\geq 1$ such that $X_{i-1} \ge C$ and condition all probabilities on $H_{i-1}$. For $ r = \lceil C(\log m)^2 \rceil$,  we let   
$\cA$ be the event that  for all $t$ with $i \le t \le i + r-1$,  $X_t<2C$ and $2|E(H_{t-1})|+X_{t-1} \le \epsilon m$. So it suffices to show $\Cprob{\cA} {H_{i-1}}  \le m^{-\gamma\sqrt{C}/20}$. 
We note that since ${2|E(H_{i-1})|+X_{i-1}}<\epsilon m$, the assumptions of Lemmas \ref{lem:sum-J}, \ref{lem:sum-K}, \ref{lem:sum-L} and Corollary \ref{firstgoodcor} are satisfied.

We first  compute the probability of the intersection of  $\cA$  and the event $\{X_{i+r-1} \neq 0\}$.
This implies $X_t>0$ for all $i \le t \le i+ r-1$.  
Applying Corollary \ref{firstgoodcor}, we have that the conditional probability of $\cA \cap\{X_ r>0\}$ is at most $m^{-\gamma \sqrt{C}/26}$.

We next   compute the probability of the intersection of  $\cA$  and  $\{X_ r=0\}$. The latter event is contained in the union of $\{\sum_{s=i}^{i+ r-1} J_s\I{X_s\leq X_{s-1}}>\frac{C}{2}\}$
and  $\{\sum_{s=i}^{i+ r-1} 2L_s\I{X_t\leq X_{t-1}} >\frac{C}{2}\}$.
Applying  Lemma \ref{lem:sum-J}, the (conditional)  probability of  $\cA \cap \{\sum_{s=i}^{i+ r-1} J_s\I{X_s\leq X_{s-1}}>\frac{C}{2}\}<m^{\gamma C/4}$. Applying  Lemma \ref{lem:sum-L}, the (conditional) probability 
of the event $\cA \cap \{\sum_{s=i}^{i+ r-1} 2L_s\I{X_t\leq X_{t-1}} >\frac{C}{2}\}$ is $m^{-\sqrt{C}/22}$. So the (conditional) probability of $\cA \cap \{X_ r=0\}$ is 
also at most $m^{-\gamma \sqrt{C}/26}$. 

\end{proof}
\begin{cor}
\label{thirdgoodcor}
     For any $\gamma>0$ and sufficiently small $\epsilon>0$, if $n_1 \le m^{1-\gamma}, n_2 \le 10^{-6}m$ then the probability that $\sum_{t=1}^i d_t(v_t) \ge 4(\log m)^4$ for some $i$,  and  $v$ does not lie in a component with at least $\frac{\epsilon m}{2}$ edges 
    is $o(m^{-(\log m)^{1/3}})$.
\end{cor}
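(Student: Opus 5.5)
We prove Corollary \ref{thirdgoodcor} by a ladder argument on the walk $X_t$, using Corollary \ref{secondgoodcor} with doubling thresholds $C_k=2^kC_0$. Fix $\gamma$ and $\epsilon$ small enough that all previous results in this section apply. Set $C_0=(\log m)^{3/4}$. The threshold must satisfy two constraints.

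\begin{itemize}
\item The failure probability at the first rung, $m^{-\gamma\sqrt{C_0}/26}$, must be $o(m^{-(\log m)^{1/3}})$. This holds because $\gamma\sqrt{C_0}/26=\gamma(\log m)^{3/8}/26\gg(\log m)^{1/3}$.
\item $C_0(\log m)^2$ must be well below $4(\log m)^4$. This holds because $C_0(\log m)^2=(\log m)^{11/4}$.
\end{itemize}

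\emph{Step 1: the walk must climb to height $C_0$.} Suppose $X_t<C_0$ for every $t$ up to the first time $T$ at which $\sum_{t\le T}d_t(v_t)\ge 4(\log m)^4$, and suppose $X_{T}>0$. Since $d_t(v_t)\le X_{t-1}<C_0$, this forces $T\ge 4(\log m)^4/C_0\gg C_0(\log m)^2$ iterations. In particular, there are iterations $i,\dots,i+C_0(\log m)^2$ on which $0<X_t<2C_0$ throughout. Corollary \ref{firstgoodcor} bounds the probability of this for a fixed $i$ by $m^{-\gamma\sqrt{C_0}/26}$. A union bound over at most $m$ choices of $i$ then handles this case.

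There is one subtlety: we must rule out the walk hitting $0$ while $X_t<C_0$ after enough iterations that $\sum d_t(v_t)\ge 4(\log m)^4$. The event in the statement only requires that the sum reach $4(\log m)^4$, possibly with the walk dying afterwards. To handle this, we apply the argument to the window just before $T$. Before dying, the walk stays in $(0,2C_0)$ for at least $C_0(\log m)^2$ steps, so Corollary \ref{firstgoodcor} again applies. Alternatively, if the walk does reach $C_0$ at some point, we pass to Step 2.

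\emph{Step 2: climbing the ladder.} Let $i_0$ be the first time $X_{i_0-1}\ge C_0$. Apply Corollary \ref{secondgoodcor} with $C=C_0$: outside probability $2m^{-\gamma\sqrt{C_0}/26}$, either $2|E(H_{t-1})|+X_{t-1}\ge\epsilon m$ for some later $t$, or $X_t\ge 2C_0$ for some later $t$. In the second case we iterate with $C_1=2C_0$, then $C_2$, and so on, while $C_k\le m^{\gamma/20}$. The failure probabilities $2m^{-\gamma\sqrt{C_k}/26}$ decay geometrically in $\sqrt{2}^k$, so their sum is dominated by the $k=0$ term.

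After $O(\log m)$ rungs, either $2|E(H_{t-1})|+X_{t-1}\ge\epsilon m$ is reached, or $X_t\ge m^{\gamma/20}$. To close the remaining gap we use Corollary \ref{69}. Once $X$ is this large, the truncated-martingale bound gives, with probability $1-e^{-m^{1/11}}$, that $X_i$ grows at least like $0.9\sum d_j(v_j)-m/\log m$. This prevents the walk from hitting $0$ before $2|E(H)|+X\ge\epsilon m$; a direct drift estimate via Corollary \ref{iterationanalysiscor} is an alternative if the martingale slack $m/\log m$ is too coarse.

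Once $2|E(H_t)|+X_t\ge\epsilon m$, we have $|E(H_t)|+X_t\ge\epsilon m/2$. Every open half-edge gives an edge of the component, so the component has at least $\epsilon m/2$ edges, as required.

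\emph{Main obstacle.} The hardest part is bridging from $X\approx m^{\gamma/20}$ up to linear size without ever returning to $0$. Corollary \ref{69}'s error $m/\log m$ exceeds $m^{\gamma/20}$, so it cannot be applied naively. My first attempt would be to continue the doubling ladder past $m^{\gamma/20}$ using Corollary \ref{iterationanalysiscor}. Its per-step failure probability $\bigl(d_i(v_i)n_1/(10\sqrt{X}m)\bigr)^{10\sqrt{X}}$ is superpolynomially small once $X\ge m^{\gamma/20}$ and $n_1\le m^{1-\gamma}$, so the walk never drops by more than $12\sqrt{X}$ in a step. Combining this with the positive drift from Corollary \ref{lem:exp-x-star} and Azuma's inequality over blocks should show that $X$ keeps doubling until $\epsilon m$ is reached. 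The final union bound over rungs and starting iterations costs only polynomial factors, which are absorbed.
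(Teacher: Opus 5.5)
Your Step 1 and the upward climb in Step 2 are sound. Your $C_0=(\log m)^{3/4}$ works as well as the paper's $C=2\log m$, and a single application of Corollary~\ref{firstgoodcor} at $i=1$ suffices. The gap is the last stage, which you correctly flag but do not close.

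Climbing the ladder only shows that $X$ reaches ever higher levels; nothing in it stops $X$ from falling back to $0$ afterwards. Your first remedy, Corollary~\ref{69}, fails for the reason you give yourself: its slack $m/\log m$ dwarfs $m^{\gamma/20}$. Its alternative (b) also says nothing about component size. Your second remedy, continuing the doubling beyond $m^{\gamma/20}$ via drift plus Azuma over blocks, is not supported by the available results:
\begin{itemize}
\item Corollary~\ref{secondgoodcor} and Lemmas~\ref{lem:sum-J}, \ref{lem:sum-L} require $C\le m^{\gamma/20}$.
\item The drift bound of Corollary~\ref{lem:exp-x-star} needs its hypothesis (3). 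This can fail once $D^*\ge\epsilon m$ and a vertex of degree at least $\sqrt m(\log m)^{-2}$ has been explored.
\item Azuma over a block in which single increments can be of order $X$ does not by itself give super-polynomially small failure probabilities.
\end{itemize}
As written, ``should show'' remains a sketch.

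The missing idea is that you never have to push $X$ up to linear size. The exploration ends with $X=0$ when the component is exhausted. So it is enough to show that, once the walk has reached the top level $2^t$ with $t=\lfloor\gamma\log m/20\rfloor$, it cannot die before $2|E(H_{i-1})|+X_{i-1}\ge\epsilon m$. The band $[2^t,2^{t+1})$ acts as a barrier:
\begin{itemize}
\item Apply Corollary~\ref{secondgoodcor} at the first entry into the band and at every later time the walk enters it from above. A union bound over at most $m$ such times, each super-polynomially unlikely to fail, shows that every such entry is followed by a return to $X\ge 2^{t+1}$ or by linear size.
\item If the walk died first, consider the last time before death at which $X\ge 2^{t+1}$; it exists by the first application. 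The next step either lands in the band, contradicting the re-entry application, or jumps from $X_{i-1}\ge 2^{t+1}$ to $X_i<2^t$.
\item Such a jump is a drop of more than $12\sqrt{X_{i-1}}$, which Corollary~\ref{iterationanalysiscor} excludes.
\end{itemize}
You already invoke the one-step drop bound. Combining it with repeated applications of Corollary~\ref{secondgoodcor} at re-entry times is what closes the paper's proof, entirely within the range $C\le m^{\gamma/20}$.

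A minor point: $2|E(H)|+X\ge\epsilon m$ gives $|E(H)|+X/2\ge\epsilon m/2$. Open half-edges may be matched to each other, so they contribute at least $X/2$ edges, not $X$. The conclusion is unaffected.
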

\begin{proof}

We consider the largest $t \le \lfloor \frac{ \gamma  \log m}{20} \rfloor$ such that $X_i \ge 2^t$ for some $i$,
and the largest $i$ for which this holds for our choice of $t$. If $t \le \lfloor \log \log m \rfloor$
then there is no $i$ such that $X_i> 2 \log m$ and hence no $i$ such that $d_i(v_i)>2 \log m$.
In this case, in order for $\sum_{t=1}^i d_t(v_t) \ge 4(\log m)^4$ to occur for some $i$, in particular $X_t$ must remain above 0 for the first  $\lceil 2(\log m)^3\rceil $ iterations without reaching $2 \log m$. Applying Corollary \ref{firstgoodcor} with $i=1$ (note that the role of $i$ in that corollary is not the same as the role of $i$ in this one) and  $C=2 \log m$,
we see that the probability this occurs is  $o(m^{-(\log m)^{1/3}})$. Applying Corollary \ref{secondgoodcor} for $\lfloor   \log \log m \rfloor <t<\lfloor \frac{\gamma \log m}{20} \rfloor$ times, we see that the probability we have $\lfloor   \log \log m \rfloor <t<\lfloor \frac{\gamma \log m}{20} \rfloor$ is $o(m^{-(\log m)^{1/3}})$. 

If $t =\lfloor \frac{\gamma \log m}{20} \rfloor$ then applying  Corollary \ref{secondgoodcor}, to the first $i$ for which $X_i \ge 2^t$,  we see that the probability  we have no  $i'>i$ for which either $X_{i'} \ge 2^{t+1}$ or $2|E(H_{i'-1})|+X_{i'-1} \ge \epsilon m$  is $o(m^{-(\log m)^{1/3}})$.
We further apply Corollary \ref{secondgoodcor}, to every  $i$ for which $2^t \le X_i<2^{t+1}$ and $X_{i-1} \ge2^{t+1}$, combined with the union bound, this yields that 
the probability that there is such an   $i$ for which  there is no  $i'>i$ with $X_{i'} \ge 2^{t+1}$ or $2|E(H_{i'-1})|+X_{i'-1} \ge \epsilon m$  is $o(m^{-(\log m)^{1/3}})$. In particular, this implies we need only bound the probability of the event that there is an $i$ such that $X_i \le 2^{t}$ and $X_{i-1} \ge 2^{t+1}$. Since this means $X_i <X_{i-1}-12\sqrt{X_{i-1}}$, applying Corollary \ref{iterationanalysiscor}, we are done.  
\end{proof}

We can now give the proof of Theorem \ref{thekeyresult}.
\begin{proof}[Proof of Theorem \ref{thekeyresult}]
 We simply combine  $n$ applications of Corollary \ref{thirdgoodcor} with 
 one application of Lemma \ref{theyjoinmain}.
\end{proof}

\section{Analyzing The Start of The Exploration}\label{theearlystages}
\label{secearly} 

We now prove Theorem \ref{main}. 

We recall that the theorem states that for any feasible degree sequence the probability that $G(\cD)$ is disconnected is 
\[
 O\left(u_{edge}+u_{\Delta}+u_{\Delta+1}+u_{K_4-e}+u_{K_4}+u_{K_5^+}
\right).
\]
where 
\begin{gather*}
    u_{edge}=\frac{\max(n_1-1,0)^2}{m},\,u_{\Delta}=\frac{\max(n_2-2,0)^3}{m^3},\,u_{\Delta+1}=\frac{n_1\max(n_2-1,0)^2n_3}{m^4},\\
    u_{K_4-e}=\frac{\max(n_2-1,0)^2\max(n_3-1,0)^2}{m^5},\,u_{K_4}= \frac{\max(n_3-3,0)^4}{m^6},\,u_{K_5^+}=\frac{n}{m^6}.
\end{gather*}

\begin{proof}[Proof of Theorem \ref{main}]

The theorem  is trivially true unless $n_1 < 10^{-6}\sqrt{m}$ and $n_2\leq 10^{-6}m$ so we assume this is the case. We prove the theorem by exploring out of every vertex $v\in[n]$.  By Theorem \ref{thekeyresult},  the probability that the component containing $v$ has at least  $2 (\log m)^4$ edges but $G(\cD)$ is not connected, is $o(m^{\log\log{m}})$. Therefore it suffices to bound, for every $v\in[n]$,  the probability that the exploration from $v$ dies out before reaching a step $i$ with  $2|E(H_i)|+X_i \ge 4(\log m)^4$. We refer to the period before $2|E(H_i)|+X_i \ge 4(\log m)^4$ as the early stage and note that it consists of at most $2(\log  m)^4$ iterations. For any vertex $v$ which dies out in the early stage, we define $i^*$ as the minimum  of the $i$ for which  $X_i=0$, so $i^* \le 2(\log m)^4$.  Our  precise results on the early stage of the exploration   depend on the shape of the degree distribution and on the degree of $v$.\par
We first compute the probability that a vertex $v$ of degree $1$ is the endpoint of a path component of length $i^*+1$ by considering the exploration from it. For this to occur, we must have $d_i(v_i)=X_{i-1}=1=K_i$ for all $1\le i < i^*$ (if $i^*=1$ then there are no such $i$) and $d_{i^*}(v_{i^*})=X_{i^*-1}=J_{i^*}=1$. Applying Lemma \ref{lem:boundary-grows2}, we obtain that the probability this occurs is at most 
\begin{equation*}
    \frac{10\max(n_1-1,0)}{m}\left(\frac{20n_2}{m}\right)^{i^*}.
\end{equation*}
Hence, summing over all vertices of degree $1$, the probability a path component is created  in the early stages is at most
\begin{equation*}
    n_1\sum_{0 \le i^*\le 2(\log m)^2}\frac{10\max(n_1-1,0)10^{-4i^*}}{m}\leq \frac{20\max(n_1-1,0)^2}{(1-10^{-4})m}<\frac{21max(n_1-1,0)^2}{m}.
\end{equation*}\par
We next  compute the probability that a vertex $v$ of degree $2$ lies  on a cycle of length $ i^*+1$ by considering the exploration from it. If this occurs then $i^* \ge 2$, $X_0=d_1(v_1)=K_1=2$, $d_i(v_i)=K_i=1$ and $X_i=2$ for all $2\leq i<i^*$ (if $i^*=2$ then there are no such $i$), and $d_{i^*}(v_{i^*})=L_{i^*}=1$ while $X_{i^*-1}=2$.
Applying Lemma \ref{lem:boundary-grows2}, we obtain that the probability this occurs is no more than
\begin{equation*}
    \left(\frac{30\max(n_2-2,0)}{m}\right)^2\left(\frac{30\max(n_2-2,0)}{m}\right)^{i^*-2}\frac{4}{m}\leq\frac{3600\max(n_2-2,0)^2(10^{-4i^*+8})}{m^3}.
\end{equation*}
So the probability there is a vertex in  such a cycle component is at most
\begin{equation*}
    n_2\sum_{j\geq0}\frac{3600\max(n_2-2)^2(10^{-4j})}{m^3}\leq\frac{10800\max(n_2-2,0)^3}{(1-10^{-4})m^3}<\frac{11000\max(n_2-2,0)^3}{m^3}.
\end{equation*}\par

Finally, we consider explorations from $v$  of degree at least $3$ which die out in the early stages.  
We note that for any iteration $i$, if $L_i\geq 2$ then $X_i\geq L_i (d_i(v_i)-1)>0$, and if $L_i=1$ then $X_i\geq d_i(v_i)-1>0$ unless $d_i(v_i)=1$. So either $L_{i^*}=0$ (and $J_{i^*}=X_{i^*-1}=d_{i^*}(v_{i^*})$), or $L_{i^*}=d_{i^*}(v_{i^*})=1$ and $X_{i^*-1}=2$ (and $J_{i^*}=K_{i^*}=0$).\par

It is straightforward to show that if, for some  $i$, $X_i>4000$  but the exploration from $v$ dies out in the early stages, then either
\begin{enumerate}[(i)]
    \item there is a $j>i$ for which $X_{j-1}>500$  and $X_j<X_{j-1}-12\sqrt{X_{j-1}}$, or
    \item there are at least $13$  $j>i$  for which $X_j<X_{j-1}$, or
    \item there are at least 7  $j>i$ for which $X_j<X_{j-1}-1$. 
\end{enumerate}
Furthermore if $X_i<X_{i-1}$ at most $12$ times, no $X_i$ exceeds $4000$, and (i) does not hold,  then there are at most 
10000 $i$ for which $X_i>X_{i-1}$. 
So to deal with explorations from vertices of degree at least  three  it is enough to show that 
\begin{enumerate}[(A)]
\item For every such vertex $v$, the probability that $X_i<X_{i-1}$  for more than $12$ iterations $i \le i^*$  or $X_{i}<X_{i-1}-1$  for more than $6$ iterations $i\le i^*$ is $O(m^{-6})$ (cases (ii) (iii));
\item For every such vertex $v$, the probability that for some $i<i^*$ with $X_i>500$, $X_{i-1}<X_i -12\sqrt{X_{i-1}}$
is $O(m^{-6})$ (case (i)); and
\item The probability that there is some component such that when exploring from  every vertex $v$ in the component, no $X_i$ exceeds $4000$ and there are at most $12$ iterations $i$  for which $X_i<X_{i-1}$ and at most $6$ iterations $i$ for which $X_i<X_{i-1}-1$, is 
\[
 O\left(u_{edge}+u_{\Delta}+u_{\Delta+1}+u_{K_4-e}+u_{K_4}+u_{K_5^+}\right).
\]

\end{enumerate}
In order that $X_i\leq X_{i-1}-1$, we must either have $J_i \ge 1$ or $L_i \ge 1$.
Applying Lemma~\ref{lem:boundary-grows2}, we see that the probability that this occurs during an iteration in the early stage conditioned on the exploration to that point is $O(\frac{\max(n_1,(\log m)^8)}{m})$ which is $O(m^{-1/2})$. In the same vein, in order that $X_i\leq X_{i-1}-2$, we must either
have $J_i \ge 2$ or $L_i \ge 1$. Applying Lemma \ref{lem:boundary-grows2}, we see that the probability that this occurs in the early stage is 
$O(\frac{(\log m)^8}{m})$. So, the probability that in the early stage there are more than twelve  iterations in which $X_i$ 
decreases or more than six  in which $X_i$ decreases by at least $2$ is $o(m^{-6})$ and we are done with (A).

Next, Corollary \ref{iterationanalysiscor} implies that the probability that  for  some iteration $i$ in the early stage,   $X_i>500$ and $X_i<X_{i-1}-12 \sqrt{X_{i-1}}$ is $o(m^{-6})$ and we are done with (B), 

We now turn to proving (C). 
For any $0\leq i^*\leq \lfloor 4(\log m)^4 \rfloor$ and disjoint subsets $S^-,S^+\subseteq S=\{1,2,\dots,\lfloor 4(\log m)^4 \rfloor\}$ with $|S^-|\le 12$ and $|S^+| \le 10000$, we can compute the probability that 
$X_{i^*}=0$ but $0<X_i<4000$ for all $1\leq i<i^*$, $X_i-X_{i-1}>0$ only for $i \in S^+$ and $X_i-X_{i-1}<0$ only for $i \in S^-$.
For each of the remaining $i< i^*$  in which $X_i=X_{i-1}$ either $K_i=d_i(v_i)$ or $J_i+L_i>0$.
So,  applying Lemma \ref{lem:boundary-grows2}, the probability that $X_{i-1}=X_i$  is at most
$\max(m^{-1/3},\frac{20n_2}{m}) \le 10^{-4}$,  while the number of choices for $S^+$ and  $S^-$   and $L_i,J_i,K_i$
for each $i \in S^- \cup\{1,\dots, 300\}$  is  less than $(i^*)^{10012} (4000)^{936} $.
So the probability that such an $S^-$ and $S^+$ exist is of no larger order than the maximum, over all such choices of $\{L_i,J_i,K_i\}_{i\in S^-\cup\{1,\dots,300\}}$, of the product of the conditional 
probabilities given $H_{i-1}$ that  $J_i,K_i,L_i$ take the chosen values for each $i$  in  $\{1,\dots,300\} \cup S^-$.

We  note that if $|S^-|=1$ and the events of the preceding paragraph occur, then $X_{i^*-1} -X_{i^*} \ge X_0-0 \ge 3$ so we must have 
$L_{i^*}=0$ and $J_{i^*}=d_{i^*}(v_{i^*}) \ge 3$.\par
If $n_1\geq 2$ then $u_{edge}=\frac{\max(n_1-1,0)^2}{m} \ge \frac{n_1^2}{2m}$, and Lemma \ref{lem:boundary-grows2} then implies  that the probability that  $|S^-|\geq2$  or $J_{i^*} \ge 3$ is $O(\frac{n^2_1}{m^2})$; so (C) holds in this case.

We therefore only need to prove (C) when  $n_1 \le 1$ (recalling the definition of $\delta^*$ from \eqref{eq:delta-star}, we note that this implies $\delta^*+1 \ge 2$ which will be relevant when we apply Lemma \ref{lem:boundary-grows3}). Applying Lemma \ref{lem:boundary-grows2} we see that this implies that the probability $X_i<X_{i-1}$ is $O(\frac{1}{m} )$ so (C) holds for explorations for which $|S^-|>5$. Therefore we need only consider explorations with $|S^-| \le 5$.  Furthermore,  either  $L_{i^*}=1$ and $J_{i^*}=0$ or $J_{i^*}=d_{i^*}(v_{i^*})=1$  and $L_{i^*}=0$. In either case,  $X_{i^*-1} \le 2$  and  so $|S^-| \ge 2$. Hence, if $n_2=\Omega(n)$ then we are done by considering $u_\Delta$, so we can assume this is not the case and so 
$\delta^*+1 \ge 3$. Furthermore, we are done if $n_2>2$ and $|S^-|>3$ so we can assume this is not the case. 

We now let $i_1>i_2\dots>i_{|S^-|}$ be  the elements of $S^-$, so $i_1=i^*$. If $X_{i_2-1} >6$ then $2L_{i_2}+J_{i_2} \ge 5$.
Thus, we have  $d_{i_2}(v_{i_2}) \ge 3$  and since $J_{i_2} \le n_1 \le 1$, $L_{i_2} \ge 2$, but since $X_{i_2} \le X_{i^*-1}
\le 2$, this contradicts our choice of $v_{i_2}$. So, $X_{i_2-1} \le 6$.  A similar proof then shows $X_{i_3{-1}} \le 12$.
Continuing in the same vein, we obtain that $X_{i_4-1} \le 20$ and $X_{i_5{-1}} \le 30$. Thus we need only consider explorations where every $X_i \le 30$ and hence every vertex explored has degree at most $31$. 

Now, by Lemma \ref{lem:boundary-grows2},  since $ n_1\leq 1$, 
the (conditional) probability that $0<X_i \le  X_{i-1}$  for $i \le 300$ is $O(\frac{\max(n_2,1)}{m})$.  It follows that the probability that there are 5  such $i$ and $0=X_{i^*}<X_{i^*-1}$
is ${O((\frac{\max(n_2,1)}{m})^6)= } O( \frac{\max(u_{\Delta},u_{K_5^+})}{n})$ and (C) holds in this case. But if $i^*>500$ then either there are 5  such $i$ 
or some $X_i$ exceeds $30$, and (C) holds in both cases. So, we need only consider explorations for which $i^* \le 500$.
Hence the  number of choices for the degrees of the vertices added to the graph and the $L_i$ edges added within the graph at  every iteration is a fixed constant and we need only bound the maximum, over  any such choice, of the products of the conditional probability that in each iteration $i$,
the degrees of the vertices added to the tree and back edges are as claimed.We refer to such a choice of the number of vertices of every degree up to $31$ and back edges for each iteration up to $500$ as the \emph{specified choice}. 

We note that since $d_i(v_i)\leq X_{i-1}<30$ in every iteration, we have $ 2|E(H_{i-1})|+X_i <10^6$ 
in every iteration.

\begin{claim}
    The conditional probability, given $H_{i-1}$, of the specified choice for a fixed iteration $1\leq i\leq 500$ occurring, is $O\left(m^{- \left\lceil \frac{\min(X_{i-1},11)-X_i}{2} \right\rceil}\right)$. 
\end{claim}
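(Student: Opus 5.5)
Since every quantity in the specified choice is bounded by an absolute constant ($d_i(v_i)\le 31$, $X_{i-1}\le 30$, $i\le 500$), we can apply Lemma \ref{lem:boundary-grows3} with $H_{i-1}$ fixed. It gives a conditional probability of
\[
O\Bigl(\prod_{j=1}^{\delta^*+1}\bigl(\tfrac{n_j}{m}\bigr)^{s_j}\bigl(\tfrac1m\bigr)^{\ell}\Bigr),
\]
where $s_j$ is the number of new neighbours of $v_i$ of degree $j$ and $\ell=L_i$. We are in the regime $n_1\le 1$ and $\delta^*+1\ge 3$, and we have already reduced to the case where $n_2=O(m)$ is not of order $n$. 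So the plan is to discard every factor except those coming from back edges and from degree-$1$ neighbours. We bound $n_1/m\le 1/m$, use $(n_2/m)^{s_2}\le 1$, and use $(n_j/m)^{s_j}\le 1$ for $j\ge 3$. This leaves $O(m^{-(J_i+L_i)})$. The second statement of Lemma \ref{lem:boundary-grows3} needs $d_i(v_i)\le\delta^*+1$. When this fails I would fall back on the bound \eqref{eq:l} of Lemma \ref{lem:boundary-grows2}: with $d_i(v_i),X_{i-1}=O(1)$ it again gives a factor $O(1/m)$ for each back edge (iterating over $\ell$), and \eqref{eq:j} gives $O(1/m)$ for each degree-$1$ neighbour since $n_1\le1$.

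Next I relate $J_i+L_i$ to the drop in $X$. A neighbour of degree $1$ reduces the open count by $1$, a back edge reduces it by $2$, a degree-$2$ neighbour leaves it unchanged, and a neighbour of degree $j\ge3$ increases it by $j-2\ge1$. Hence $X_{i-1}-X_i\le J_i+2L_i$. If $L_i\ge1$ this gives $J_i+L_i\ge\lceil (X_{i-1}-X_i)/2\rceil$. If $L_i=0$ it gives $J_i+L_i\ge X_{i-1}-X_i$, which is even better. Since $\min(X_{i-1},11)-X_i\le X_{i-1}-X_i$, the exponent in the Claim follows whenever the right-hand side is positive. When $\min(X_{i-1},11)-X_i\le0$ the Claim asserts only $O(1)$, which is trivial.

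I expect the only real obstacle to be checking the hypotheses of Lemma \ref{lem:boundary-grows3} in every case. We need $X_{i-1}+2|E(H_{i-1})|\le10^6$, which follows from $i\le500$ and all degrees being at most $31$. We also need to handle the case $d_i(v_i)>\delta^*+1$ via Lemma \ref{lem:boundary-grows2}, making sure that the back-edge factor $2d_i(v_i)X_{i-1}/(\ell m)$ is $O(1/m)$ for each unit of $\ell$. The cap at $11$ in the statement suggests the authors only need the bound up to drops of size about $11$, so constant losses in these cases are harmless.
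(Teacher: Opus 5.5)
Your handling of the case $d_i(v_i)\le\delta^*+1$ is fine and is what the paper does: it uses Lemma \ref{lem:boundary-grows3} when $d_i(v_i)\le 3$, and Lemma \ref{lem:boundary-grows2} when $L_i\le 1$.

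\textbf{The gap.} Your fallback for $d_i(v_i)>\delta^*+1$ does not work as stated. The bound \eqref{eq:l} controls $\Bad^{(i)}_{j,k}\cap\{L_i\ge\ell\}$ by $p^{(i)}_{j,k}\cdot\frac{2d_i(v_i)X_{i-1}}{\ell m}$. For every $\ell$ this is a \emph{single} factor $O(1/m)$. It is an absolute bound, not a ratio between consecutive values of $L_i$, so there is nothing to iterate. Combined with \eqref{eq:j}, the stated lemmas give only $O(m^{-J_i-\min(L_i,1)})$, which is too weak once $L_i\ge 2$. For example, take $d_i(v_i)=4$, $L_i=3$, $K_i=1$, $X_{i-1}=16$, $X_i=10$. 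Your uncapped exponent $\lceil (X_{i-1}-X_i)/2\rceil$ would require $m^{-3}$, but only one factor of $1/m$ is available. That your argument would prove the claim with no cap at all should have been a warning sign.

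\textbf{Role of the cap.} The cap at $11$ is not about absorbing constant losses; it is exactly what makes one back-edge factor enough. The paper exploits the minimal choice of $v_i$, which gives $X_{i-1}\ge (L_i+1)d_i(v_i)$ and $X_i\ge L_i(d_i(v_i)-1)$.
\begin{itemize}
\item If $d_i(v_i)\ge 4$ and $L_i\ge 3$, then $X_i\ge 9$, so the capped exponent is at most $1$.
\item If $d_i(v_i)\ge 4$ and $L_i=2$, then $X_{i-1}\ge 12$, so the cap binds. Since $X_i\ge X_{i-1}-J_i-4$, one back-edge factor plus \eqref{eq:j} suffices unless $J_i=0$ and $X_i=8$.
\item That exceptional case forces $d_i(v_i)=4$, $K_i=2$, $X_{i-1}=12$. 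If $n_2\le 2$, the second factor comes from \eqref{eq:k}. If $n_2\ge 3$, the earlier reduction to $|S^-|\le 3$ (with $X_{i_2-1}\le 6$) rules the configuration out.
\end{itemize}

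\textbf{Repairing your route.} You would have to reprove the back-edge switching in ratio form, $\p{L_i=\ell,\dots}\le O(1/m)\,\p{L_i=\ell-1,\dots}$. This must hold jointly with the profile of new neighbours. It must also hold without the constraint $X_i\le X_{i-1}$ built into $\Bad^{(i)}_{j,k}$, since each such switch gives $v_i$ a new neighbour of degree at least $3$ and so raises $X_i$. This looks feasible here, because in the specified choice every vertex of $H_i$ has degree at most $31$. But it is new work, not an application of Lemma \ref{lem:boundary-grows2}.
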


\begin{proof}
Recall that $J_i+2L_i\geq X_{i-1}-X_i$. So, applying   Lemma \ref{lem:boundary-grows2} proves the claim unless $L_i\geq 2$. Furthermore, since $\delta^*+1\geq 3$, applying Lemma \ref{lem:boundary-grows3} proves the claim unless $d_i(v_i) \geq 4$. If $L_i>2$  this implies $X_i \geq L_i(d_i(v_i)-1)\geq 9$ and again we are done by applying  Lemma \ref{lem:boundary-grows2}. 

So, $L_i=2$, $X_{i-1} \ge (L_i+1)d_i(v_i) \ge 12$ and $11-X_{i} \le 4$. So we are done by Lemma \ref{lem:boundary-grows2} unless $J_1=0$ and $X_i \le 8$. 
This implies $d_i(v_i)=4, L_i=2$, $K_i=2$ and $X_i=8$. So, applying   Lemma \ref{lem:boundary-grows2} proves the claim unless $n_2 \geq 3$. 
This implies $|S^-| \le 3$ which  is impossible as $X_i >6$.
\end{proof}

\begin{cor}
\label{reallygoodcor}
    For any fixed iteration $1\leq i\leq 500$, the conditional probability, given $H_{i-1}$, of the specified choice for every iteration after $i$ occurring, is $O\left(m^{- \left\lceil \frac{\min(X_{i-1},11)}{2} \right\rceil}\right)$. 
\end{cor}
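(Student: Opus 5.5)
The natural route is to chain the Claim across iterations and let the exponents telescope. Fix $i$ and write $f(x)=\lceil \min(x,11)/2\rceil$. The specified choice for iterations $i,i+1,\dots,i^*$ has conditional probability, by the chain rule, equal to the product over $t=i,\dots,i^*$ of the conditional probabilities given $H_{t-1}$ that iteration $t$ realises its specified choice. Each factor is $O(m^{-\lceil(\min(X_{t-1},11)-X_t)/2\rceil})$ by the Claim. Since $i^*\le 500$, there are at most $500$ factors, so the implied constants multiply to a constant.

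It then suffices to show that, along the specified path,
\[
\sum_{t=i}^{i^*}\left\lceil \frac{\min(X_{t-1},11)-X_t}{2}\right\rceil \ge f(X_{i-1}),
\]
using $X_{i^*}=0$. I would first compare each summand with the increment $f(X_{t-1})-f(X_t)$. For this I need the elementary inequality
\[
\left\lceil\frac{\min(a,11)-b}{2}\right\rceil \ge \left\lceil\frac{\min(a,11)}{2}\right\rceil-\left\lceil\frac{\min(b,11)}{2}\right\rceil
\]
for nonnegative integers $a,b$. If $b\ge 11$, the right side is at most $0$. The left side may be negative, but a negative summand is only a weaker bound, and replacing it by $0$ is also legitimate, since each probability is at most $1$. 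So in each factor I would use the bound $O(m^{-\max(0,\cdot)})$, valid because probabilities are at most $1$. If $b<11$, write $c=\min(a,11)$. Then $\lceil (c-b)/2\rceil \ge \lceil c/2\rceil-\lceil b/2\rceil$, since $\lceil x\rceil+\lceil y\rceil\ge\lceil x+y\rceil$ with $x=(c-b)/2$ and $y=b/2$ gives $\lceil (c-b)/2\rceil+\lceil b/2\rceil\ge\lceil c/2\rceil$.

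With the inequality in hand, the sum telescopes. Taking $\max(0,\cdot)$ termwise, the total exponent is at least $\sum_t (f(X_{t-1})-f(X_t))=f(X_{i-1})-f(X_{i^*})=f(X_{i-1})$, because $f(0)=0$. This yields $O(m^{-f(X_{i-1})})$ as claimed. Alternatively, I would phrase the argument as a backward induction on $i$ from $i^*$ down. The base case $t=i^*$ is the Claim with $X_{i^*}=0$. The inductive step multiplies the Claim's bound by the inductive hypothesis $O(m^{-f(X_i)})$ and applies the inequality above. Since there are at most $500$ steps, the constants stay bounded.

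The main obstacle is making sure the $\max(0,\cdot)$ truncation is harmless. When $X_t$ exceeds $\min(X_{t-1},11)$, the Claim's exponent is negative and gives nothing. The telescoping quantity $f$, however, is capped at $\lceil 11/2\rceil$, so increases of $X$ above $11$ cost nothing in $f$. I therefore need $f(X_{t-1})-f(X_t)\le \max(0,\lceil(\min(X_{t-1},11)-X_t)/2\rceil)$ in every case, and the case check above confirms this. The inductive form applies the Claim conditionally on $H_{t-1}$, which is legitimate because the specified choices determine $H_{t-1}$.
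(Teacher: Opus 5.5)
Your proposal is correct and is essentially the argument the paper intends (the paper states this corollary without writing out a proof): chain the preceding Claim over the at most $500$ remaining iterations via conditional probabilities, then telescope the exponents using $X_{i^*}=0$. Your explicit check that $\max\bigl(0,\lceil(\min(a,11)-b)/2\rceil\bigr)\ge \lceil\min(a,11)/2\rceil-\lceil\min(b,11)/2\rceil$, via subadditivity of the ceiling and the cap at $11$, supplies exactly the detail the paper leaves implicit.
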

This corollary implies that the probability  that there is  any  exploration dying out
in the first $500$ iterations for 
which some $X_i\geq11$ is $O(\frac{n}{m^6})=O(u_{K_5^+})$, so we need only count choices for which every  $X_i \le 10$. 

\begin{claim}
    Claim (C) holds if we weaken its statement by requiring the component to contain a vertex of degree at least $4$.
\end{claim}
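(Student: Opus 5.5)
We work in the reduced setting already reached: $n_1\le 1$, $\delta^*+1\ge 3$, $|S^-|\le 5$, $i^*\le 500$, every $X_i\le 10$. By Corollary \ref{reallygoodcor}, starting the exploration from a vertex $v$ of degree $d(v)=X_0$ costs a factor $O(m^{-\lceil \min(X_0,11)/2\rceil})$. The trick is to choose the starting vertex well. A component that dies in the early stage has a vertex of maximum degree. We explore from a vertex $w$ of degree at least $4$, taking $w$ of largest degree in the component. Since $X_0=d(w)\ge 4$, the exploration from $w$ alone already costs $O(m^{-2})$; this is not enough, so we refine.

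\textbf{Refined count via the specified choice.} We bound the probability of the specified choice not through the crude ceiling in the Claim, but through the product over iterations of the bounds of Lemma \ref{lem:boundary-grows3}. That product is $\prod_j (n_j/m)^{s_j}$ over the vertices added, times $m^{-\ell}$ for each back edge. The total exponent of $m^{-1}$ is then (number of non-root vertices) $+$ (number of back edges) $= |E(C)|$ for the component $C$. Summing over the at most $n_{d(w)}\le n$ choices of root gives a bound $O\big(n\, m^{-|E(C)|}\prod_{x\in C\setminus w} n_{d(x)}\big)$. More precisely, we keep factors $n_{d(x)}/m$ for each non-root vertex and $1/m$ for each back edge, and use $n_{d}\le n\le 2m/3$ where convenient.

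\textbf{Case analysis on the component.} Every vertex has degree at most $11$, $n_1\le1$, and the root has degree at least $4$, so the component has at least $5$ vertices.
\begin{itemize}
\item If $C=K_5$, the bound is $n_4(n_4/m)^4m^{-6}\cdot m^{0}$. Since back edges number $6$, this is $O(n^5/m^{10})=O(n/m^6)$.
\item In general, $|E(C)|\ge |V(C)|\cdot\bar d/2$ with $\bar d\ge$ the average degree. Each non-root vertex contributes $n_{d(x)}/m\le 1$, and each extra edge beyond the tree contributes $1/m$. With $|V(C)|\ge5$ and a vertex of degree $\ge4$, the number of back edges is $|E(C)|-|V(C)|+1$.
\end{itemize}
We must show the total is $O(n/m^6)$ or dominated by one of $u_{\Delta+1},u_{K_4-e},u_{K_4}$.

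I expect this to be the main obstacle. Vertices of degree $3$ have $n_3$ possibly $\Theta(n)$, so their factor $n_3/m$ is only $O(1)$. For the bound to reach $m^{-6}\cdot n$, we need the graph to have at least $6$ more edges than "free" factors. Here the degree sum is the key: $2|E(C)|=\sum d(x)\ge 3|V(C)|+1-(\text{degree-}1,2\text{ deficits})$. Each degree-$2$ vertex is paid for with $n_2/m\le 10^{-6}$, and the single degree-$1$ vertex with $n_1/m$.

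\textbf{Conclusion.} Writing the exponent of $1/m$ as $|E(C)|-\#\{x\ne w: n_{d(x)}\text{ large}\}$, I would show it is at least $6$ whenever all but the root are treated as free. This follows from $|E(C)|-(|V(C)|-1)\ge (|V(C)|+1)/2+1\ge 4$, plus a degree-$\ge4$ boost. Any shortfall is checked by hand for $|V(C)|\le 6$, matching $K_5^-$-type graphs to $u_{K_5^+}$.
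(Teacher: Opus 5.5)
There is a genuine gap, and it sits in your ``refined count''. Lemma~\ref{lem:boundary-grows3} does not give a factor $n_{d(x)}/m$ for every vertex added to the tree. It only gives factors $(n_j/m)^{s_j}$ for new neighbours of degree $j\le\delta^*+1$, and it only gives the back-edge factor $m^{-\ell}$ in iterations with $d_i(v_i)\le\delta^*+1$. In the reduced setting you only know $\delta^*+1\ge 3$, whereas the component contains a vertex of degree at least $4$, possibly several, with up to $10$ open half-edges at a time. Vertices of degree $\ge 4$ entering the tree therefore contribute no power of $1/m$. Back edges leaving a vertex with $d_i(v_i)\ge 4$ are not covered by that lemma either. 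So the identity ``exponent of $m^{-1}$ equals $|E(C)|$'', and the bound $O(n\,m^{-|E(C)|}\prod_{x\ne w}n_{d(x)})$ built on it, are unjustified.

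The paper is arranged to avoid needing such factors. It applies Lemma~\ref{lem:boundary-grows3} only to the \emph{first} iteration from the start vertex $v$. There, uncharged neighbours of degree $\ge 4$ are still paid for, because each contributes at least $3$ to $X_1$. It then bounds the entire remainder of the exploration by $O(m^{-\lceil X_1/2\rceil})$ via Corollary~\ref{reallygoodcor}, which holds whatever the degrees involved. For example, when $v$ has no degree-$1$ neighbour this gives $O(\max(n_3-1,1)^{T_1}m^{-\lceil\frac32 d(v)+\frac{T_1}{2}\rceil})$.

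Even granting your product formula, the closing count fails. If all non-root vertices are treated as free, the exponent is the cycle rank $|E(C)|-|V(C)|+1$, and this need not reach $6$:
\begin{itemize}
\item For $K_5$ minus two disjoint edges (degrees $4,3,3,3,3$) the cycle rank is exactly $4$. Your lower bound $(|V(C)|+1)/2+1$ is attained, and there is no further ``degree-$\ge 4$ boost''.
\item Once degree-$2$ vertices appear, your inequality $2|E(C)|\ge 3|V(C)|+1$ fails outright. Two triangles sharing a degree-$4$ vertex have cycle rank $2$, and the factor $n_2/m\le 10^{-6}$ is a constant, not a power of $1/m$.
\end{itemize}
Neither configuration is $O(n/m^6)$ in general. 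The first must be charged to $u_{K_4}$ by keeping the $n_3^4$ factor. The second must be charged to $u_\Delta$, which is missing from your list of targets.

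This is why the paper splits into two cases. When $n_2\ge 3$, everything is charged to $u_\Delta/m$. When $n_2\le 2$, it uses $n_1\le 1$ to pick a start vertex of degree $\ge 4$ with either no neighbour of degree $\ge 4$ or no neighbour of degree $1$. The first-iteration computation then lands in $u_{\Delta+1}$, $u_{K_4}$ or $u_{K_5^+}$. Your proposal leaves all of this to an unspecified hand check, so as written it does not prove the claim.
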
 

\begin{proof}

We consider exploring from a vertex $v$  of degree  at least $4$. 

We first consider the case $n_2\geq 3$.
We know that  $X_1 \ge 2d(v)-2J_1-K_1$ or equivalently that $\frac{X_1}{2}-J_1\leq-d(v)+\frac{K_1}{2}$. Applying Lemma \ref{lem:boundary-grows3} we know the probability that the first iteration behaves as claimed is $O(\frac{\max(n_2-2,1)^{K_1}}{m^{J_1+K_1}})$.   So, applying Corollary \ref{reallygoodcor} and 
the fact that $K_1 \le d(v)$ we have that the probability the specified exploration occurs is 
\begin{align*}
    O(\max(n_2-2,1)^{K_1}m^{-\frac{X_1}{2}-J_1-K_1})&=O(\max(n_2-2,1)^{K_1}m^{-d(v)-\frac{K_1}{2}}).
\end{align*}
Since $d(v) \ge max(K_1,4)$, this is $O(\frac{u_\Delta}{m})$. 

It remains to consider the case $n_2 \le 2$, which implies   $\max(n_2-2,1)=O(1)$. Since  $n_1 \le 1$,  if some vertex  of degree at least $4$ has a neighbour of degree at least  $4$, then one of these two vertices has no neighbour of degree $1$. Thus a component containing a vertex of degree at least $4$ either contains a vertex of degree at least  $4$ adjacent to no vertex of degree at least $4$, or a vertex of degree  at least $4$ adjacent to no vertex of degree $1$.

We consider first the probability of a specified  exploration from  $v$ in  
which  $v$ has no neighbour of degree at least $4$, and hence has a neighbour of degree three.  We can assume $X_1 \le 10$ so  applying 
Lemma \ref{lem:boundary-grows3} with $\delta^*+1=3$ to 
the  first iteration  and  Corollary \ref{reallygoodcor} to the remainder of the exploration, we obtain  that  the probability of the specified exploration  is 
\begin{equation*}
    O(n_3^{d(v)-J_1-K_1}m^{-d(v)-\lceil \frac{X_1}{2}\rceil}).
\end{equation*}

 We  have that $X_1\geq 2d(v)-2J_1-K_1\geq 4$ deterministically since $J_1\leq 1$ and $K_1\leq 2$. So, if $n_3 \le 3$ 
the probability of the specified exploration is $O(\frac{u_{K_5^+}}{n})$ and we are done. If $X_1\ge 5$ the probability of the specified exploration is $O(\frac{u_{K_4}}{m})$ 
So $X_1=4$ and we must have $J_1=1$ and $K_2=2$. Thus, the probability of the specified exploration is $O(\frac{u_{\Delta+1}}{m})$.

We consider next the possibility of a specified exploration from $v$ in which $v$ has no neighbour of degree 1.
So,  letting $T_1$ be the number of neighbours of $v$ of degree $3$,we have  have $X_1 \ge 3d(v)-2K_1-T_1$  which implies $\lceil\frac{X_1}{2}\rceil+K_1+T_1\geq\lceil\frac{3}{2}d(v)+\frac{T_1}{2}\rceil$. Hence, applying Lemma \ref{lem:boundary-grows3} to the first iteration followed by Corollary \ref{reallygoodcor} to the rest of the exploration ,
 we have that the probability the specified exploration occurs is
 \begin{equation*}
 O(\max(n_3-1,1)^{T_1}m^{-K_1-T_1-\lceil\frac{X_1}{2}\rceil})=O(\max(n_3-1,1)^{T_1}m^{-\lceil\frac{3}{2}d(v)+\frac{T_1}{2}\rceil}).
 \end{equation*}
If $n_3 < 4$ or $T_1=0$ this is $O(\frac{u_{K_5^+}}{n})$. If $n_3 \ge 4$  and $0<T_1<5$ this is $O(\frac{u_{K_4}}{m})$.

 If $n_3 \ge 4$ and $T_1\geq 5$  setting $a=d(v)-4$  and $a'=T_1-4$, we have $a \geq a'$ which implies that $\frac{a+a'}{2}\geq a'$ and
\begin{equation*}
    -\frac{3}{2}d(v)-\frac{T_1}{2}\leq-4-\frac{a+4+a'+4}{2}=-8-\frac{a+a'}{2},
\end{equation*}
so the probability of the specified exploration is
\begin{equation*}
O(n_3^{4+a'}m^{-8-\frac{a+a'}{2}})=O(\max(n_3-3,0)^4m^{-8})
\end{equation*}
So, the probability there is such an exploration from $v$ is $O(\frac{u_{K_4}}{m})$.

\end{proof}

So, to complete the proof of Claim (C), we need only show: 

\begin{claim}
Claim (C) holds if we weaken its statement by requiring the component to contain no vertex of degree at least $4$.
\end{claim}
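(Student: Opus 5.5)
The plan is to split according to whether the component contains a vertex of degree $3$. We are already in the regime fixed above: $n_1\le 1$, $\delta^*+1\ge 3$, $i^*\le 500$, every $X_i\le 10$, and $|S^-|\le 5$ (and $|S^-|\le 3$ if $n_2>2$).

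\emph{Components with no vertex of degree $3$.} All degrees then lie in $\{1,2\}$. Since $n_1\le 1$ and a path component has two endpoints of degree $1$, the component must be a cycle of degree-$2$ vertices. The cycle computation carried out earlier in this section already bounds the probability of such a component by $O(u_\Delta)$.

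\emph{Components containing a vertex of degree $3$.} Here I explore from a degree-$3$ vertex $v$. In the first iteration $L_1=0$, so $J_1+K_1+T_1=3$, where $T_1$ is the number of degree-$3$ neighbours of $v$. We have $J_1\le n_1\le 1$ and $X_1=K_1+2T_1$.

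I apply Lemma \ref{lem:boundary-grows3} to the first iteration, which is legitimate because $\delta^*+1\ge 3$. I apply Corollary \ref{reallygoodcor} to the rest of the exploration. Summing over the at most $n_3$ choices of $v$, the bound is
\[
O\!\left(n_3\, n_1^{J_1} n_2^{K_1} n_3^{T_1}\, m^{-3-\lceil (K_1+2T_1)/2\rceil}\right).
\]
Vertices discovered after the first iteration contribute only the $m$-powers given by Corollary \ref{reallygoodcor}, which suffices.

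I then enumerate the finitely many triples $(J_1,K_1,T_1)$.
\begin{itemize}
\item $T_1=3$ gives $n_3^4m^{-6}$, which is of the order of $u_{K_4}$.
\item $(0,1,2)$ gives $n_2n_3^3m^{-6}$. By AM--GM this is at most $\tfrac12\bigl(n_2^2n_3^2m^{-5}+n_3^4m^{-7}\bigr)$, so it is covered by $u_{K_4-e}+u_{K_4}$.
\item $(0,2,1)$ gives $n_2^2n_3^2m^{-5}$, which matches $u_{K_4-e}$.
\item $(1,2,0)$ gives $n_1n_2^2n_3m^{-4}$, which matches $u_{\Delta+1}$.
\item $(0,3,0)$ gives $n_2^3n_3m^{-5}$. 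This is $O(u_\Delta\cdot n_3/m^2)$, which is smaller still.
\item Triples with $J_1=1$ and $T_1\ge 1$ carry a larger power of $m$. They are absorbed similarly, or by $u_{K_5^+}$ when all degree counts are $O(1)$.
\end{itemize}

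\emph{The main obstacle: the offsets in the $u$'s.} The terms $\max(n_2-1,0)$, $\max(n_3-3,0)$, etc.\ mean the crude bound with $n_j$ in place of the offsets is not enough when some $n_j$ is tiny. For example, $K_4$ needs four distinct degree-$3$ vertices, and then $n_3(n_3-1)(n_3-2)(n_3-3)$ is within a constant factor of $\max(n_3-3,0)^4$.

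To get the offsets, I would rerun the switching in Lemma \ref{lem:boundary-grows3} counting only vertices of degree $j$ not already in $H_{i-1}\cup\{v\}$. This replaces each $n_j$ factor by a falling factorial in the number of \emph{unused} degree-$j$ vertices. It requires checking that the reverse-switching count in that proof is really over unused vertices, which I expect holds since the swapped-in vertex lies outside $H_{i-1}$.

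With that refinement, every structure whose required vertex multiset is unavailable gets probability $0$ or an extra factor of $1/m$. For instance, $K_4-e$ is impossible when $n_2\le 1$, and in such cases the bound falls into $u_{K_5^+}=n/m^6$. Handling these degenerate small-$n_j$ cases carefully, in particular making sure every surviving case has total $m$-exponent at least $6$ or the matching $n_j$ exponents, is where I expect the work to lie.
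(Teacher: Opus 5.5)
There is a genuine gap in the triples with $J_1=1$ and $T_1\ge 1$, namely $(1,0,2)$ and $(1,1,1)$. Your own formula gives $n_1n_3^3m^{-5}$ and $n_1n_2n_3^2m^{-5}$ for these. Both have $m$-exponent $5$ rather than $6$, and neither carries the $n_2$ factors needed to be matched by $u_{\Delta+1}$ or $u_{K_4-e}$, so ``absorbed similarly'' is false.

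For example, take $n_1=1$, $n_2=0$, $n_3\approx\sqrt m$, with all remaining vertices of degree $4$. Then $u_{edge}=u_\Delta=u_{\Delta+1}=u_{K_4-e}=0$, $u_{K_4}\approx m^{-4}$ and $u_{K_5^+}=O(m^{-5})$, but your $(1,0,2)$ bound is about $m^{-7/2}$.

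The loss occurs at the step ``vertices discovered after the first iteration contribute only the $m$-powers given by Corollary~\ref{reallygoodcor}, which suffices.'' That corollary charges only $m^{-\lceil X_1/2\rceil}$ for the rest of the exploration. It discards the factor $n_3/m$ of every later degree-$3$ vertex, and it ignores that the component is forced to be larger. In the example there is one degree-$1$ vertex and no degree-$2$ vertices. Parity together with the infeasibility of $(1,3,3,3)$ then forces at least five degree-$3$ vertices, so the true cost is $O(n_1n_3^5m^{-8})$.

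The paper avoids this by applying Lemma~\ref{lem:boundary-grows3} at \emph{every} iteration. This bounds the specified component $H$ by $m^{-|E(H)|}\prod_{w\in V(H)}n_{d(w)}$. The paper then drops degree-$2$ and degree-$3$ vertices from the product as convenient, since each contributes $n_2/m\le 1$ or $n_3/m^{3/2}\le 1$. Finally it does a case analysis on the numbers of degree-$1$, $2$ and $3$ vertices of $H$, using parity and the infeasibility of $(1,3,3,3)$.

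Your first-iteration approach can instead be repaired by choosing the start vertex. Since $n_1\le 1$, at most one degree-$3$ vertex is adjacent to a degree-$1$ vertex.
\begin{itemize}
\item If the component has at least two degree-$3$ vertices, start from one with $J_1=0$. Your four $J_1=0$ cases then go through; for $(0,1,2)$ with $n_2=1$, use $n_3^3m^{-6}$ directly instead of AM--GM.
\item If it has only one degree-$3$ vertex, parity forces the degree-$1$ vertex to be present. The component is then a cycle through $v$ with a pendant path, so $v$ has type $(1,2,0)$ or $(0,3,0)$, both of which you already handle.
\end{itemize}

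The offset issue in your last paragraph needs no rerun of the switching. A structure using $k$ distinct degree-$j$ vertices forces $n_j\ge k$, hence $n_j\le k\max(n_j-k+1,0)$.
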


\begin{proof}
Let $H$ denote the component subgraph corresponding to the specified choices. Recall $H$ contains at least one vertex of degree $3$, at most one vertex of degree 1 and only vertices of degree at most $3$. Applying Lemma \ref{lem:boundary-grows3} with $\delta^*+1 \ge 3$, we see that the probability the specified exploration occurs somewhere is at most $m^{-|E(H)|}\prod_{v \in V(H)} n_{d(v)}$. We note that this is at most $m^{-\frac{1}{2}\sum_{v\in S}d(v)}\prod_{v\in S}n_{d(v)}$ for any $S\subseteq V(H)$ such that $V(H)\setminus S$ contains no degree $1$ vertices. Recall that $n_1\leq 1$.\par 
So, if $H$ contains:
\begin{enumerate}
    \item At least four vertices of degree $3$, then since the number of vertices of odd degree is even, this probability is $O(u_{K_4})$;
    \item One vertex of degree $1$ and at least two vertices of degree $2$, then this probability is $O(u_{\Delta+1})$;
    \item No vertices of degree $1$ and at least two vertices of degree $2$, then since the number of vertices of odd degree is even, $H$ contains at least two vertices of degree $3$, so this probability is $O(u_{K_4-e})$;
    \item One vertex of degree $1$ and exactly one vertex of degree $2$, then $H$ contains two adjacent vertices of degree $3$, so since the number of vertices of odd degree is even $H$ contains at least three vertices of degree $3$, so this is
\begin{equation*}
    O(m^{-6}n_2\max(n_3-2,0)^3)=O(u_{\Delta+1}+u_{K_4} +u_{K_5^+});
\end{equation*}
\item No vertices of degree $1$ and exactly one vertex of degree $2$, then $H$ contains at least three vertices of degree $3$, so it contains at least four by parity;
\item One vertex of degree $1$ and no vertices of degree $2$, then $H$ contains at least three vertices of degree $3$, but $(1,3,3,3)$ is infeasible, so $H$ contains at least four vertices of degree $3$;
\item No vertices of degree $1$ or $2$, then $H$ contains at least four vertices of degree $3$.
\end{enumerate}
\end{proof}
This completes the proof of Theorem \ref{main}.
\end{proof}

\section{A Remark on Optimality}\label{sec:concl}

We show now that our bounds are tight up to a constant factor if  $D^* \le \frac{m}{3}$
and any of the following hold: $n_1>1$, $n_2>2$, or $n_3=\Omega(n^{1/4})$. 
We need the following which is the part of  Theorem 1 of \cite{gao2025subgraphprobabilityrandomgraphs} restricted to the case $H_2=\emptyset$, which lower-bounds the probability of the existence of 
an edge (they use $J({\cD})$  for what we call $D^*$ and $M=2m$, we use $ij$ in place of their $uv$,  we have dropped any reference to $H_2$ and replaced  $H_1$ by $H$, they use $H^+$ for the event that $H \subseteq G$ and $\Delta$  for $d(n)$). 

\begin{thm}
Let $H$ be a graph on $[n]$  such that $H$ is a possible subgraph on a graph with degree sequence $\cD$, and   $uv\not\in  E(H)$. Then,
\begin{multline*}
    \Cprob{ij \in E(G)}{H \subseteq G}\\
    \ge \left(1-\frac{2D^*+6d(n)}{2m-2|E(H)|}\right)\frac{(d(i)-d_H(i))(d(j)-d_H(j))}{2m-2|E(H)|+(d(i)-d_H(i))(d(j)-d_H(j))}.
\end{multline*}
\end{thm}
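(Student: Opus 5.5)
We would prove this lower bound with the same switching comparison used throughout the paper, run in the direction opposite to Lemma \ref{newlem:1/m}. Fix $H$ and let $d'(i)=d(i)-d_H(i)$, $d'(j)=d(j)-d_H(j)$ and $M'=2m-2|E(H)|$. All counting is over graphs $G\supseteq H$ with degree sequence $\cD$. Let $\cA$ be the set of such graphs with $ij\in E(G)$ and $\cB$ the set with $ij\notin E(G)$. The goal is $|\cA|/|\cB|\ge \rho$ with
\[
\rho=\Big(1-\tfrac{2D^*+6d(n)}{M'}\Big)\frac{d'(i)d'(j)}{M'}.
\]
This gives $\Cprob{ij\in E(G)}{H\subseteq G}=|\cA|/(|\cA|+|\cB|)\ge \rho/(1+\rho)$. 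Since $\rho/(1+\rho)\ge (1-\theta)x/(1+x)$ when $\rho=(1-\theta)x$ and $x=d'(i)d'(j)/M'$, this is exactly the stated bound.

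\textbf{Switchings from $\cB$ to $\cA$.} Take $G\in\cB$, an edge $ia\notin E(H)$ at $i$, and an edge $jb\notin E(H)$ at $j$ with $a\ne b$. Replace $\{ia,jb\}$ by $\{ij,ab\}$. The result is in $\cA$ and still contains $H$ provided $ab\notin E(G)$ and $a,b\notin\{i,j\}$. There are $d'(i)d'(j)$ ordered choices of the pair $(ia,jb)$, and we subtract the bad ones:
\begin{itemize}
\item $a=b$: at most $\min(d'(i),d'(j))\le d(n)$ choices;
\item $a=j$ or $b=i$: impossible, since $ij\notin E(G)$;
\item $ab\in E(G)$: for fixed $ia$, the number of $b\in N(a)\cap N(j)$ is at most $d(a)$. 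Summing over $a\in N(i)$ gives at most $\sum_{a\in N(i)}d(a)\le D^*$.
\end{itemize}
Counting the last case symmetrically from the other side, at least
\[
d'(i)d'(j)-\min\big(d'(j)D^*,\,d'(i)D^*\big)-O(d(n))\,d'(\cdot)
\]
switchings remain. In relative terms this is at least $d'(i)d'(j)\big(1-(2D^*+6d(n))/M'\big)$ once we use $\min(d'(i),d'(j))\le M'$-type normalisations. We expect the $6d(n)$ term to absorb the degenerate cases together with the loss from parallel edges.

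\textbf{Switchings from $\cA$ to $\cB$.} Take $G'\in\cA$. The reverse switching is determined by choosing an edge $ab\in E(G')\setminus E(H)$, with orientation, other than $ij$. There are at most $M'$ such oriented choices. Double counting then gives
\[
|\cB|\cdot d'(i)d'(j)\Big(1-\tfrac{2D^*+6d(n)}{M'}\Big)\le |\cA|\cdot M',
\]
which is the desired ratio.

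\textbf{Main obstacle.} The crude bound $D^*$ on forbidden pairs multiplies by a single factor $d'(\cdot)$ rather than by $d'(i)d'(j)$. Converting the excluded count into the relative error $2D^*/M'$ therefore needs care. We would first try to bound the forbidden pairs as $\big(\sum_{a\in N(i)}|N(a)\cap N(j)|\big)$ and normalise against $M'$ via an averaging over which edge plays the role of $ab$. If that falls short, we would simply invoke Theorem 1 of \cite{gao2025subgraphprobabilityrandomgraphs}, from which the statement is quoted verbatim.
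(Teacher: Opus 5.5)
\textbf{The paper's treatment.} The paper does not prove this statement; it quotes it from Theorem 1 of \cite{gao2025subgraphprobabilityrandomgraphs}. Your fallback therefore coincides with what the paper does.

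\textbf{The gap in your switching argument.} Your own argument has a genuine gap, and it is exactly the one you flag. With the two-edge switching $\{ia,jb\}\to\{ij,ab\}$, the forbidden pairs $(a,b)$ with $ab\in E(G)$ number at most $\sum_{a\in N(i)}|N(a)\cap N(j)|\le D^*$ \emph{in total}. This is an additive loss, not one proportional to $d'(i)d'(j)$. So all you get is $|\cA|/|\cB|\ge (d'(i)d'(j)-D^*-d(n))/M'$, which is vacuous whenever $d'(i)d'(j)<D^*$. No normalisation can rescue a pointwise bound of this type. If $d'(i)=d'(j)=3$ and every neighbour of $i$ is adjacent to every neighbour of $j$, that graph in $\cB$ has no switching of your kind at all, yet with bounded degrees the target factor $1-(2D^*+6d(n))/M'$ is close to $1$. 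Hence the claim that the loss becomes $d'(i)d'(j)(2D^*+6d(n))/M'$ ``once we use $\min(d'(i),d'(j))\le M'$-type normalisations'' is false. The displayed count $d'(i)d'(j)-\min(d'(j)D^*,d'(i)D^*)-\dots$ also does not follow from your bullets. Averaging over ``which edge plays the role of $ab$'' does not help either: in your switching $ab$ is forced by $a$ and $b$, so there is no free edge to average over.

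\textbf{The missing idea: a third, free edge.} The error term needs a free edge, which supplies the extra factor $M'$. From $G\in\cB$, choose edges $ia,jb\notin E(H)$ and an oriented edge $xy\notin E(H)$, and replace $\{ia,jb,xy\}$ by $\{ij,ax,by\}$. For fixed $(a,b)$, you must exclude the following oriented edges $xy$:
\begin{itemize}
\item those with $x\in N(a)$, at most $\sum_{w\in N(a)}d(w)\le D^*$ of them (this also covers $x=i$);
\item those with $y\in N(b)$, at most $D^*$ (this covers $y=j$);
\item the degenerate cases $x\in\{a,b,j\}$ or $y\in\{a,b,i\}$, at most $6d(n)$ in all.
\end{itemize}
The case $a=b$ causes no trouble. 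So every $G\in\cB$ has at least $d'(i)d'(j)\bigl(M'-2D^*-6d(n)\bigr)$ forward switchings. Every $G'\in\cA$ has at most $M'^2$ backward switchings, obtained by choosing the oriented edges $ax$ and $by$. Double counting gives
\[
\frac{|\cA|}{|\cB|}\ge\Bigl(1-\frac{2D^*+6d(n)}{M'}\Bigr)\frac{d'(i)d'(j)}{M'},
\]
and your closing step $\rho/(1+\rho)\ge(1-\theta)x/(1+x)$ finishes the proof.
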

Applying this theorem yields the following bound. 

\begin{cor}
  If $D^*\le \frac{m}{3}$, then  conditioned on the existence of  any subgraph $H$ of $G$ with at most 8 vertices, each of which has degree at most three in $G$, we have for any two vertices $i$ and $j$ such that $ ij \not \in H$, $10>d(i)>d_H(i)$, and $10>d(j)>d_H(j)$, the conditional probability that $ij \in E(G)$ given that $H \subseteq G$ is  $\Omega(\frac{1}{m})$.   
\end{cor}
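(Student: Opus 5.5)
This follows by plugging the hypotheses into the theorem of Gao and Ohapkin just quoted. Each factor in that lower bound can be bounded separately, using only $D^*\le \frac m3$ and the smallness of $H$.

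\emph{First factor.} Since $H$ has at most 8 vertices, each of degree at most three in $G$, we get $|E(H)|\le 12$. Hence $2m-2|E(H)|\ge 2m-24$.

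Next we control $d(n)$. The vertex of maximum degree $n$ has $d(n)$ neighbours in any graph with degree sequence $\cD$. The vertices $n-d(n)+1,\dots,n$ all have degree at least $1$, so $D^*=\sum_{i=n-d(n)+1}^n d(i)\ge d(n)$. The top $d(n)$ vertices include vertex $n$ itself (when $d(n)\ge1$), so in fact $D^*\ge d(n)+(d(n)-1)=2d(n)-1$. It is simplest to use only $d(n)\le D^*\le \frac m3$, but this is too weak: it gives $2D^*+6d(n)\le \frac{8m}{3}$, which exceeds $2m$.

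So the sharper bound $D^*\ge 2d(n)-1$ is needed. I would refine it further as follows. Among the top $d(n)$ vertices, the $d(n)-1$ vertices other than $n$ each have degree at least $d(n-d(n)+1)$. To see that $D^*$ dominates a large multiple of $d(n)$, observe that vertex $n$ has $d(n)$ distinct neighbours. By pigeonhole, the top $d(n)$ degrees sum to at least the degree sum of $n$'s neighbours, and each neighbour has degree at least $1$.

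This alone gives only $D^*\ge 2d(n)-1$, hence $2D^*+6d(n)\le 2D^*+3D^*+3=5D^*+3\le \frac{5m}{3}+3$. Then
\[
1-\frac{2D^*+6d(n)}{2m-24}\ge 1-\frac{5m/3+3}{2m-24}\ge \frac1{6}-o(1)=\Omega(1)
\]
for $m$ large. This yields the first factor. For bounded $m$ the claim $\Omega(1/m)$ is trivial provided the probability is positive; I would handle small $m$ by absorbing it into the constant, or simply assume $m$ is large as the paper does throughout. This constant-chasing, making $2D^*+6d(n)$ a constant fraction below $2m$, is the step I expect to be the main obstacle, and the bound $d(n)\le (D^*+1)/2$ is what makes it work.

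\emph{Second factor.} Because $d(i)>d_H(i)$ and $d(j)>d_H(j)$, both residual degrees are at least $1$, so their product $P$ satisfies $P\ge 1$. Since both degrees are below $10$, we also have $P<100$. Therefore
\[
\frac{P}{2m-2|E(H)|+P}\ge \frac{1}{2m+100}=\Omega\!\left(\frac1m\right).
\]

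Multiplying the two factors gives $\Cprob{ij\in E(G)}{H\subseteq G}=\Omega(\frac1m)$, with constants uniform over all choices of $H$, $i$ and $j$ satisfying the hypotheses.
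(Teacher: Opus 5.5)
Your proof is correct and follows the paper's argument: apply the Gao--Ohapkin bound, combine $D^*\ge 2d(n)-1$ with $D^*\le m/3$ and $|E(H)|=O(1)$ to make the first factor at least $\frac16-o(1)$, and use the bounded residual degrees to make the second factor $\Theta(1/m)$. Your form $D^*\ge 2d(n)-1$ is in fact the accurate one (the paper writes $D^*\ge 2d(n)$, which fails for a star), and the resulting additive $+3$ is harmless.
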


\begin{proof}
 We have $D^* \ge 2d(n)$ so $2D^*+6d(n)  \le \frac{5m}{3}$. Also, $|E(H)| \le 24$ so for large $m$, $\frac{11m}{6}<2m-|E(H)|<2m$.
     So, $\frac{(d(i)-d_H(i))(d(j)-d_H(j))}{2m-|E(H)|}=\Theta(\frac{1}{m})$
     and  $\left(1-\frac{2D^*+6d(n)}{2m-2|E(H)|}\right)=\Omega(1)$.   
\end{proof}

It follows that under the assumption $D^*\leq\frac{m}{3}$, the expected number of edge components  is $\Omega(u_{edge})$,
 the expected number of triangle components is $\Omega(u_{\Delta})$,the expected 
 number of components consisting of a degree 1 vertex attached to a triangle is $\Omega(u_{\Delta+1})$,
 the expected number of $K_4-e$ components is $\Omega(u_{K_4-e})$ and the 
 expected number of $K_4$ components is $\Omega(u_{K_4})$. 

For each $J\in\{edge,\Delta,\Delta+1,K_4-e,K_4\}$, one can then show the expected number of pairs of  components inducing $J$ is  $O(u_J)^2$ 
using Lemma \ref{lem:boundary-grows3}.  It then follows by Chebyshev's inequality that the probability there is a  component  inducing $J$ is $\Omega(u_J)$. 
This shows our bound is tight unless no $u_j$ is $\Omega(n/m^6)$, which is not the case if $n_1>2$, $n_2>3$,
or $n_4=\Omega(n^{1/4})$.

\section{\bf Acknowledgements}
This work began while the first two authors were in residence at the Simons--Laufer Mathematical Sciences Institute (SLMath) during the Spring 2025 semester, supported by NSF grant DMS-1928930, and the third author was visiting SLMath. The first author acknowledges support from an NSERC Discovery Grant and from the Canada Research Chairs program.
%%%%%%%%%%%%%%%%%%%%%%%%%%%%%%%%%%%%%%%%%%%%%%%%%%%%%
%\small
%\addtocontents{toc}{\SkipTocEntry} %TO SUPPRESS LIST OF NOTATION FROM TABLE OF CONTENTS
%\printnomenclature[3.1cm]
%\normalsize
%%%%%%%%%%%%%%%%%%%%%%%%%%%%%%%%%%%%%%%%%%%%%%%%%%%%

\small 
\bibliographystyle{plain}
\bibliography{ref}{}

\end{document}